\newtheorem{theorem}{Theorem}
\newtheorem*{theorem-non}{Theorem}
\newtheorem{corollary}{Corollary}
\newtheorem{proposition}{Proposition}
\newtheorem{lemma}{Lemma}
\newtheorem{assumption}{Assumptions}
\newcommand{\eps} {{\epsilon}}
\newcommand{\Cov}{\mathrm{Cov}}
\DeclareMathOperator{\Var}{Var}
\newcommand{\bz}{{\bf z}}
\renewcommand{\phi}{\varphi}
\renewcommand{\P}{\mathbb P}
\newcommand{\E}{\mathbb E}
\newcommand{\R}{\mathbb R}
\newcommand{\N}{\mathbb N}
\newcommand{\ind}{1\!\kern-1pt \mathrm{I}}
\newcommand{\rsto}{]\!\kern-1.8pt ]}
\newcommand{\lsto}{[\!\kern-1.7pt [}
\newcommand\F{\mbox{I\kern-2pt F}}
\newcommand\1{\mathds{1}}
\newcommand\cC{{\mathcal C}}
\newcommand{\blue}{\textcolor[rgb]{0.00,0.00,1.00}}
\title{Asymptotics of the frequency spectrum for general Dirichlet  $\Xi$-coalescents }
\author[1]{Adri\'an Gonz\'alez Casanova}
\author[2]{Ver\'onica Mir\'o Pina}
\author[3]{Emmanuel Schertzer}
\author[4]{Arno Siri-J\'egousse}
\affil[1]{\footnotesize Instituto de Matem\'aticas de la Universidad Nacional Aut\'onoma de M\'exico, \'Area de la Investigaci\'on Cient\'ifica, Circuito Exterior, C.U., 04510 Coyoac\'an, CDMX, M\'exico.} 
\affil[2, 4]{\footnotesize Instituto de Investigaciones en Matem\'aticas Aplicadas y Sistemas, Universidad Nacional Aut\'onoma de M\'exico, Circuito Escolar 3000, C.U., 04510 Coyoac\'an, CDMX, M\'exico.}
\affil[2]{\footnotesize Centre for Genomic Regulation (CRG), The Barcelona Institute of Science and Technology, Barcelona, Spain.}
\affil[2]{\footnotesize Universitat Pompeu Fabra (UPF), Barcelona, Spain.}
\affil[3]{\footnotesize Faculty of Mathematics, University of Vienna,
Oskar-Morgenstern-Platz 1, 1090 Wien, Austria.}
\date{}
\begin{document}
\maketitle

\section*{Abstract}
In this work, we study general Dirichlet coalescents, which are a family of $\Xi$-coalecents constructed from i.i.d mass partitions, and are an extension of the symmetric coalescent. 
This class of models is motivated by population models with recurrent demographic bottlenecks. 
We study the short time behavior of the multidimensional block counting process whose $i$th component counts the number of blocks of size $i$. 
Compared to standard coalescent models (such as the class of $\Lambda$-coalescents coming down from infinity), our process has no deterministic speed of coming down from infinity. In particular,
we prove that, under appropriate re-scaling, it converges to a stochastic process which is the unique solution of a  martingale problem.
We show that the multivariate Lamperti transform of this limiting process is a Markov Additive Process (MAP).
 This allows us to provide some asymptotics  for the 
 $n$-Site Frequency Spectrum,  which is a statistic widely used in population genetics. 
 In particular, the rescaled number of mutations converges to the exponential functional of a subordinator.

\section{Introduction} 
\subsection{General Dirichlet $\Xi$-coalescents}

Coalescents with simultaneous and multiple collisions ($\Xi$-coalescents, \cite{S2000,mohle3,BertoinLeGall}) are exchangeable coagulating continuous-time Markov chains with values in the set of partitions of $\N$.
They can be built via a paintbox construction (see  \cite{Berbook}, Section 4.2.3).
Consider a finite measure  $\Xi$ on  $\Delta=\{\vec p=(p_1,p_2,\dots): \sum p_j\le1\}$, the simplex on $(0,1)$, and a Poisson point process (Ppp) on $\R_+\times \Delta$, of intensity $dt\otimes\Xi(d\vec p)/\sum p_j^2$, giving the coagulation times and rules.
At every jump time, (1) consider a family of disjoint subintervals $(I_1,I_2,\dots)$ of $(0,1)$, where  $I_j$ has length $p_j$, (2) throw every block present at this time uniformly at random and independently  in $(0,1)$, and (3) blocks falling into the same subinterval among $(I_1,I_2,\dots)$ merge into one.

In this work, we consider a particular subclass of $\Xi$-coalescents where the interval partition of the paintbox has a generalized version of a Dirichlet distribution with a random number of components.
More precisely, consider a sequence of non-negative numbers $(R(k); k\in\N)$ and $m$ a probability measure on $(0,\infty)$. Then, generate at rate $R(k)$ a partition $(p_1^{(k)},\dots,p_k^{(k)})$ where
\begin{equation}\label{masspart}
\forall j\in[k], \ \ p_j^{(k)} \ := \ \frac{w_j}{s_k}
\end{equation} 
where $(w_1,\dots,w_k)$ are i.i.d. random variables with law $m$ on $(0,\infty)$, \blue{$s_k := \sum_{i=1}^k w_i$ and $[k]:=\{1,\dots, k\}$}. As in the previous paintbox construction, blocks are assigned a uniform random variable, and we merge all the blocks falling in the same interval.
\blue{This corresponds to a $\Xi$-coalescent where the characteristic finite measure $\Xi$ on the infinite simplex $\Delta$ is described as follows. For every $k\in \N$, let us define $\nu_k$, a probability measure on the infinite simplex $\Delta$, s.t. $$ \nu_k = {\cal L}((w_1/s_k, w_2,s_k, \dots, w_k/s_k, 0,0,\dots )),$$
where ${\cal L}(X)$ denotes the law of the random variable $X$. Then, for every measurable $B \subset \Delta$, 
$$\Xi(B) = \sum_{k=1}^\infty \int_B R(k) \left( \sum_{i=1}^\infty x_i^2 \right) \nu_k(d(x_1, x_2, \dots)). $$
} 
The case where the $w_i$'s are Gamma distributed corresponds to the standard Dirichlet mass-partition. \blue{In particular, if the $w_i$'s are exponentially distributed, it corresponds to a symmetric Dirichlet distribution.}
We refer to this model as the general Dirichlet coalescent. 
The name Dirichlet coalescent was coined in \cite{GM}. Therein the authors consider paintbox construction according to a Dirichlet distribution with a fixed number of components. 
 
Another example of such a process is the symmetric coalescent defined in \cite{GCMPSJ}, which corresponds to the case where $w_i=1$ a.s..
In that case, in order to correspond to the paintbox construction described above, the sequence $R$ must satisfy that $\sum R(k)/k<\infty$, see \cite{GCMPSJ}.
Here, we assume a finite second moment for $m$ and that the rate of $k$-events has a heavy tail, in the following sense.
\begin{assumption}\label{def:finite-xi}
\begin{enumerate}
\item There exist $\alpha\in(0,1)$ and $\rho>0$ such that  $R(k) k^\alpha\to \rho$ as $k\to\infty$.
\item $\int_{\R_+} x^2 m(dx) <\infty$.
\end{enumerate}
\end{assumption}
In fact, a $\Xi$-coalescent is well defined if the rate at which two blocks merge into one is finite \cite{S2000}. It is easy to see from the paintbox construction that, since the vector $(p_1^{(k)},\dots,p_k^{(k)})$, is exchangeable, this rate is given by
$$\sum_{k \ge 1} R(k) k \E((p_1^{(k)})^2), $$
which is finite under Assumptions \ref{def:finite-xi} (see Proposition \ref{prop:moments} in Appendix A).

Suppose that the general Dirichlet coalescent starts with $n$ singletons.  
 Denote by $\hat \mu_t^n=(\hat \mu_t^n(1),\dots, \hat \mu_t^n(n))$ \blue{the vector such that}  $\hat \mu_t^n(i)$ is the number of blocks containing $i$ elements at time $t$, and denote by $|\hat \mu_t^n|$ the total number of blocks.
Define the rescaled vector
 \begin{equation}\label{muresc}
 \mu_t^n = \frac1n \hat \mu^n_{ t{n^{\alpha-1}}}.
 \end{equation}
In this paper, we aim at studying the limiting behavior of the Markov process $(\mu_t^n;t\geq0)$ as $n \to \infty$. We prove (in Theorem \ref{thm:conv-process}) that it converges towards a stochastic process $(\mu_t; t\geq 0)$, defined as the unique solution to a martingale problem associated to a continuous coagulation operator (see Theorem \ref{Mgprob}). 

\blue{Intuitively, the result can be understood as follows.
In the paintbox construction, when there are $n$ lineages, a $k$-merging event corresponds to throwing $n$ balls into $k$ boxes (with probabilities $(p_1^{(k)},\dots,p_k^{(k)})$), and merging the balls that land in the same box. 
For $k\gg n$, the chance that non-trivial merging occurs is negligible, whereas for $k\ll n$, all lineages will be merged into a few lineages (which disappear when rescaling the number of blocks by $n$). 
The total rate of $k$-events with $\epsilon n \le k \le M n$ for some small $\epsilon >0$ and large $M<\infty$ can by approximated by $\int_{\epsilon n}^{M n} \rho y^{-\alpha}dy = C n^{1-\alpha}$ with some constant $C$, which explains why time is slowed down by this factor. The heuristics behind the form of the coagulation operator that is the central part of the generator of the limit process are explained in Section \ref{heuristics}.
}

We also show that this limit process is self-similar with negative index $\beta:= \alpha -1$ (see Theorem \ref{thm:lamperti}).
In particular, the limit of the \blue{rescaled} block counting process $(|\mu_t|; t\geq 0)$ is the exponential of a time-changed subordinator. As a direct corollary, if we define
\begin{eqnarray}\label{eq:time-change} A_t \ := \ \inf\{s>0 : \ \int_{0}^s \frac{1}{|\mu_r|^{\beta}} dr >t \}, \ \ \mbox{and} \  \xi_t \ := \ -\log\bigg( |\mu_{A_t}| \bigg),\end{eqnarray}
then $(\xi_t; t\geq 0 )$ is a subordinator. \blue{The law of the subordinator can be identified as a direct consequence of Theorem \ref{thm:flow-representation} (see Section \ref{sect:selfsimilarity}).}
This shows that the short time behavior of the block counting process remains stochastic. This is in sharp contrast with previous studies where it is shown that classical models (such as $\Lambda$-coalescents) exhibit a deterministic  speed of coming down from infinity, see Section \ref{sect-speedofcdi} for a more detailed discussion. 
Our result can be interpreted as a stochastic speed of coming down from infinity.

\subsection{Speed of coming down from infinity}
\label{sect-speedofcdi}

We say that a $\Xi$-coalescent comes down from infinity if there are finitely many blocks  at any time $t>0$ almost surely, even if the coalescent is started with infinitely many blocks. 
In his original work, Schweinsberg already established a criterion for coming down from infinity \cite{S2000}.
In the case that the characteristic measure of the coalescent is supported on the set of finite mass partitions $\Delta^*=\{\vec p=(p_1,\dots, p_k): \sum_{j=1}^k p_j= 1, \text{ for some } k\}$  
(which is our case of interest), the process comes down from infinity if and only if
\[\int_{\Delta} \frac{\Xi(d\vec p)}{\sum_jp_j^2} = \infty.\]
Otherwise, the number of blocks stays infinite for a finite amount of time.
As another example, coalescents whose characteristic measures are supported only on infinite mass partitions, i.e. for which $\Xi(\Delta^*) = 0$, either come down from infinity or always stay infinite. 
\blue{We consider coalescents supported on $\Delta^*$ and that come down from infinity.} Limic \cite{limic} studied the small time behavior of $\Xi$-coalescents
under what she called a regularity assumption 
\[\int_{\Delta} \frac{(\sum_j p_j)^2}{\sum_j p_j^2}\Xi(d\vec p) < \infty. \]
In this setting, and starting with infinitely many lineages, there exists a speed of coming down from infinity, i.e., a deterministic function $\nu_\Xi(t)$, which is finite for all $t>0$ such that, if $|\hat\mu_t|$ is the number of blocks at time $t$ in a coalescent starting with infinitely many lineages, 
\[\lim_{t\to0^+} \frac{|\hat\mu_t|}{\nu_\Xi(t)} = 1, \textrm{ almost surely.}\]
This mirrors the behavior of the class of $\Lambda$-coalescents coming down from infinity \cite{BBS, BBL}. To summarize, most of the previous studies have shown that the block counting process of a large class of exchangeable coalescents exhibits a deterministic behavior at small time scale.

In the present work, we consider $\Xi$-coalescents belonging to the first family (for which $\Xi$ is supported on $\Delta^*$), and which come down from infinity \blue{(see \cite{S2000}, Section 5.5)}. We take a different approach, since we study the rescaled number of blocks, starting from $n$ lineages, as $n\to \infty$. In our case, when time is re-scaled by $n^{\alpha-1}$, the block counting process converges to a {\it stochastic}  self-similar process, so there is no deterministic speed of coming down from infinity.

Our results have similar flavor to those of Haas and Miermont \cite{HM} for $\Lambda$ coalescents with dust, and of M\"ohle and co-authors \cite{ GM, mohle2} for  a class of $\Xi$-coalescents with dust.
In the first work, a self similar behavior of the rescaled number of blocks is obtained in the limit. 
In the second work, they prove that the frequency of singletons, as well as the number of blocks rescaled by $n$, converges to the exponential of a subordinator (without any time-rescaling).
A natural prospect of research would be to identify conditions that would partition $\Xi$-coalescents (coming down from infinity)  into two main classes: a first class  with a deterministic limiting behavior, and a second one with a stochastic descent from  infinity.

\subsection{Perspectives on coming down from infinity}
Our results deal with processes valued on the partitions of $n$ when $n$ goes to infinity. Although this is heuristically related to the case $n=\infty$, which corresponds to working with partitions of $\N$, we expect that there are important technical challenges when studying the process starting with infinitely many blocks. To be precise, the latter would require an entrance law at infinity for the limit of the multidimensional block counting process. In our approach, we avoid this problem by rescaling the block counting process by $n$ so that $|\mu_0^n| = 1$ and there is no need for entrance laws for  the limit process $(\mu_t; t\geq 0 )$.  

The study of entrance laws of self-similar Markov processes has recently been an active area of research. In the one dimensional case there is an extensive literature (see for example \cite{S1,S2, S3} and the references therein). Recent results in the finite dimensional case can be found in \cite{S5}. This is also a classic problem for Markov additive processes \cite{S4}. 
We believe that our results can motivate the study of entrance laws for infinite dimensional self-similar processes.

\subsection{Biological motivation}
   The symmetric coalescent \cite{GCMPSJ} can be obtained as the limiting genealogy of a Wright-Fisher population that undergoes rare recurrent bottlenecks 
 reducing the population size to a random number $k$ of individuals for only one generation. 
In this case, the second point of Assumptions \ref{def:finite-xi} always holds and the first point is fulfilled if the measure characterizing the size of the bottlenecks has power tails of order $\alpha$.
General Dirichlet coalescents naturally arise in an extension of this model, that can be seen as multinomial non-exchangeable reproductive events \cite{exchnonexch}.

The analysis of the asymptotics of the multidimensional block counting process
 allows us to characterize the limiting behavior of the Site Frequency Spectrum (SFS) \blue{of} our family of $\Xi$-coalescents as some functional of the limit process $(\mu_t; t\geq 0)$.
The SFS is one of the most widespread statistics in population genetics. It consists in a vector of size $n-1$ whose $i$th component counts the number of mutations that are shared by $i$ individuals in a sample of size $n$. We suppose that mutations occur at a constant rate over the coalescent tree started with $n$ individuals, so that the SFS is closely related  to its branch lengths. In general, this is a complex combinatorial problem and most of the previous works have relied on some approximations of the short time behavior of the block counting process to derive asymptotics for the lower part of the SFS (i.e., number of singletons, pairs etc.). 
Some examples are  \cite{ DKk} for the case of the Kingman coalescent, and  \cite{BBS, BDDK} for coalescents with multiple collisions, such as Beta-coalescents, or \cite{BG, KPSJ, DKbs, KSJW} for the special case of the Bolthausen-Sznitman coalescent.
For fixed $n$, some studies on the law of the SFS can be found in \cite{FL, HSJB, KSJW}.

There are few results available regarding the SFS of $\Xi$-coalescents. Works like \cite{BCEH,spence} present computational algorithms based on recursions to derive the expected SFS for finite $n$. 
Asymptotic properties of $\Xi$-coalescents started with $n$ lineages were studied previously, in particular regarding the number of blocks \cite{Moh10}. Theorem \ref{thm:cv-sfs} describes the asymptotics of the SFS for general Dirichlet coalescents.

\section{Main results}\label{mainres}

\subsection{Notation}

Let us start this section with some notations.
We denote by $\N$ the positive integers and by $\mathbb{N}_0$ the non-negative integers.
Let $\ell^1(\R_+)$ be the set of all sequences with positive \blue{coordinates} and with finite sum. For every  $\bz=(z(1), z(2),\dots)\in\ell^1(\R_+)$,  we denote the sum of all its elements by $|\bz|= \sum_{i =1}^{\infty} z(i)$.
We also denote by $\ell^1(\N_0)$ the set of sequences with coefficients valued in $\N_0$ and finite sum.
Define \[{\cal Z} := \left\{ {\bf z}\in \ell^{1}(\R^+) \ : \ \sum_{i=1}^\infty i z(i) =1 \right\}, \ \ {\cal Z}_n \ := \ \{\bz \in{\cal Z}  \ : \ n \bz\in \ell^1(\N_0)   \}.\]
The space ${\cal Z}$ will be equipped with the $\ell^1(\R_+)$ norm.
The latter definitions are motivated by partitions of $n\in\N$. Recall that a partition of $n\in\N$ denotes an unordered sequence of integers  $\{m(1),\dots,m(k)\} $ such that $\sum_{i=1}^k  m(i) =n$.
For every $i\in\N$, define $z({i}) = \frac{1}{n} \#\{k: m(k) =i\}$, the ``frequency'' of $i$ in the partition of $n$.  Then ${\bf z}=(z(1), z(2),\dots)$ is an element of ${\cal Z}_n$. Starting with $n$ singletons, we study the multidimensional block counting process of the general Dirichlet coalescent as a Markov process valued in ${\cal Z}_{n}$ (as already outlined in the introduction). Its first component denotes the frequency of singletons, its second component is the frequency of pairs, etc.

For every $c=(c(1),c(2),\dots)\in \ell^1(\N_0)$,
we define $\phi(c)=\sum_{i=1}^\infty i c(i)$ so that for every $\ell\in \N$,   
\[\phi^{-1}(\ell) := \{ c\in \ell^1(\N_0) \ : \ \sum_{i=1}^\infty i c(i) =\ell \}.\] 
Observe that $\phi^{-1}(\ell)$ is a way of encoding partitions of $\ell$. 
For any partition $c\in \phi^{-1}(\ell)$, we also define $c ! \ = \ \prod_{i=1}^\ell c(i) !$.

For any random variable $X$ valued in $\ell^1(\R_+)$, we denote by $\E(X)$ the deterministic vector obtained by taking the expected value coordinatewise. 
For $T>0$, we denote by $D([0, T], {\cal Z} )$ the space of c\`adl\`ag functions on $({\cal Z}, \ell^{1}(\R_+))$ equipped with the Skorokhod $M_1$ topology \cite{Skorokhod,Whitt}.

For any $ \lambda\in[0,1]$, we define 
\[ \forall {\bf z}\in{\cal Z}, \   \psi_{\lambda}({\bf z}) \ := \ \sum_{i=1}^\infty z(i) \lambda^i,\]
and for every $\vec{\lambda}\in [0,1]^{K},  K \in \N$, we define 
\[ \forall {\bf z}\in{\cal Z}, \ \ \psi_{\vec{\lambda}} ({\bf z}) : = (\psi_{\lambda_1}({\bf z}), \dots, \psi_{\lambda_K}({\bf z})).\]
 We define the following set of test functions
\begin{eqnarray*}
\mathcal{T} := \bigg \{ f: {\cal Z} \to \R:   \exists \vec{\lambda} = (\lambda_1, \dots, \lambda_K) \in [0,1]^{K} , F \blue{\textrm{ is Lipshitz continuous on }[0,1]^K}, \\ \textrm{s.t. } f(\bz) \ = \ F\left(\psi_{\lambda_1}(\bz),\dots,  \psi_{\lambda_K}(\bz)\right) =
F\circ\psi_{\vec{\lambda}}(\bz)  \bigg \}.
\end{eqnarray*}

\subsection{Convergence of the rescaled partition process}\label{heuristics}

We are now ready to enunciate and comment on our main results.
We start by describing the random coagulation corresponding to the jump events of the general Dirichlet $\Xi$-coalescent.
\blue{Set  $\bz_n\in {\cal Z}_n$ where $n \bz_n(i)$ is the number of balls of size i (the size of a ball refers to the number of samples/lineages it represents) .} Then throw
 $n \bz_n(i)$ balls of size $i$, $i\geq1$, at random into $k$ boxes in such a way that the probability of falling into box $j\in[k]$ is $p^{(k)}_j$, as defined in \eqref{masspart}. 
Now, define  $\Lambda^{k,n}({\bf z}_n)\in {\cal Z}_n$ as
\begin{equation}\label{defLkn}\forall \ell\in\N,  \ \Lambda^{k,n}({\bf z}_n)(\ell) \ = \ \frac{1}{n} \#\{ j \leq k: \ \mbox{sum of the sizes of the balls falling in box $j$ is $\ell$} \}. \end{equation}
Note that, for $\ell \ge n,  \Lambda^{k,n}({\bf z}_n)(\ell) = 0$. 
By a slight abuse of notation, we define the (random) operator $\Lambda^{k,n}$ acting on ${\cal Z}_n$ such that, for
every function $g$ defined on ${\cal Z}_n$,
\[  \Lambda^{k,n}g ({\bf z}_n) \ = \  g( \Lambda^{k,n}({\bf z}_n) ). \]
Thanks to these notations we can define the infinitesimal generator of the ${\cal Z}_n$-valued process 
$(\mu_t^n; t\geq0)$ defined in \eqref{muresc} as
\begin{equation}
\label{generator-discrete}   \mathcal{A}_n f( \bz_n ) \ = \ \frac{1}{n^{1-\alpha}} \ \sum_{k\geq 1} R(k)\bigg( \E\left( \Lambda^{k,n}f(\bz_n ) \right) - f(\bz_n) \bigg)
\end{equation}
for every measurable and bounded $f: {\cal Z}_n \to \R$ and  $\bz_n\in{\cal Z}_n$.

Before diving into technicalities, let us first motivate the coming results. Assume that $k,n\to\infty$ with $ k/n\sim x\in(0,\infty)$, i.e., a large number of balls ($n$) and boxes ($k$), of the same order. Under this restriction, if an event involving $k$ boxes occurs, the number of balls of size $i$ falling in box 1 is well approximated by a Poisson random variable with parameter 
$$
p_1^{(k)}  n z_n(i) \approx  \frac{\Gamma}{x} z_{n}(i),
$$
where $$
\Gamma :=\frac{ w_1}{\E(w_1)}.
$$
Further, since the number of balls/boxes is large,  the total number of boxes with $r$ balls of \blue{size} $i$ should be well approximated by 
$$
k \times \left( \frac{\Gamma}{x} z_{n}(i)\right)^{r} \frac{e^{- \frac{\Gamma}{x} z_{n}(i)}}{r !}. 
$$ 
By a similar heuristic, \blue{if $k,n\to\infty$ with $ k/n\sim x\in(0,\infty)$}, we expect
\begin{equation*}
 \Lambda^{k,n}({\bf z}_n)(\ell)  \approx  \E\left( x \exp(-{|\bz_n|\Gamma}/{x}) \sum_{c \in \phi^{-1}(\ell)}   \prod_{i=1}^\ell  \frac{(z_n(i)\Gamma/x)^{c(i)}}{c(i) !}\right),
\end{equation*}
where the expectation is taken with respect to the random variable $\Gamma$. This justifies the limit operator introduced later on in \eqref{defCx}.

\subsection{A martingale problem}
\label{sect-Mgprob}
We now define  the martingale problem, associated to a continuous coagulation operator, described as follows. 
Let $x>0$ and define  ${\cal C}^{x}: {\cal Z} \to {\cal Z}$, such that its $\ell$th coordinate is given by
\begin{equation}\label{defCx}
{\cal C}^{x}(\bz)(\ell)=\E\left( x \exp(-{|\bz|\Gamma}/{x}) \sum_{c \in \phi^{-1}(\ell)}   \prod_{i=1}^\ell  \frac{(z(i)\Gamma/x)^{c(i)}}{c(i) !}\right). \end{equation}
From the heuristics of the previous section, ${\cal C}^{x}(\bz_n)(\ell)$ is a natural candidate to approximate $
 \Lambda^{k,n}({\bf z}_n)(\ell) $.
As for $\Lambda^{k,n}$, we define the operator $\cC^x$ \blue{on functions on} on ${\cal Z}$ such that, for
every function $g$ bounded and measurable on ${\cal Z}$,
\[  \cC^xg(\bz) \ = \  g( \cC^x(\bz) ). \]
We will show in due time that $\cC^x(\bz)\in{\cal Z}$, see Proposition \ref{prop1}.

\begin{theorem}[Uniqueness of the martingale problem]\label{Mgprob}
For every $\bz \in {\cal Z}$ and $f \in {\mathcal T}$, the function 
\begin{equation*}
x\to  \bigg( \cC^xf(\bz) -f(\bz) \bigg) \rho x^{-\alpha}
\end{equation*}
is integrable on $(0,\infty)$. Define 
\begin{equation}\label{genA} {\cal A}f(\bz) \ := \ \int_0^\infty \bigg( \cC^xf(\bz) -f(\bz) \bigg) \rho x^{-\alpha}dx. \end{equation}
 There exists a unique c\`adl\`ag process $(\mu_t; t\geq0)$ valued in ${\cal Z}$  with $\mu_0=\bz$ such that  
\begin{equation}
 \left( f(\mu_t) - \int_0^t {\mathcal A} f(\mu_s) ds;  \ t\geq0\right) \ \ \ \mbox{is a martingale for every $f\in{\cal T}.$}
\label{MP}
\end{equation} 
\end{theorem}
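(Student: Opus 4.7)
My plan is to split the statement into two parts: integrability of the integrand defining $\mathcal{A}f$, and uniqueness of the associated martingale problem via a reduction to a family of finite-dimensional Markov jump processes. Existence will come from the forthcoming Theorem \ref{thm:conv-process}, which constructs $(\mu_t)$ as the scaling limit of $(\mu^n_t)$.

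The first step is to obtain a closed form for $\psi_\lambda \circ \mathcal{C}^x$. Interchanging summations and using the elementary identity $\sum_{\ell \ge 0} \lambda^\ell \sum_{c \in \phi^{-1}(\ell)} \prod_i u_i^{c(i)}/c(i)! = \prod_i \exp(u_i \lambda^i)$ yields
\[
\psi_\lambda(\mathcal{C}^x(\bz)) = \E\Bigl(x\bigl[e^{-(|\bz|-\psi_\lambda(\bz))\Gamma/x} - e^{-|\bz|\Gamma/x}\bigr]\Bigr).
\]
A second-order Taylor expansion of $u \mapsto e^{-u}$ at $0$ (together with Assumption \ref{def:finite-xi}(2), which guarantees $\E(\Gamma^2) < \infty$) then gives $\psi_\lambda(\mathcal{C}^x(\bz)) - \psi_\lambda(\bz) = O(1/x)$ as $x \to \infty$, while the difference stays trivially bounded on $(0,1]$. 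For $f = F \circ \psi_{\vec\lambda} \in \mathcal{T}$, Lipschitz continuity of $F$ produces $|\mathcal{C}^x f(\bz) - f(\bz)| \le \|F\|_{\mathrm{Lip}} \sum_j |\psi_{\lambda_j}(\mathcal{C}^x(\bz)) - \psi_{\lambda_j}(\bz)|$, and combining with $\alpha \in (0,1)$ yields integrability against $\rho x^{-\alpha}\,dx$.

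The key structural observation for uniqueness is that the above formula depends on $\bz$ only through the pair $(|\bz|, \psi_\lambda(\bz))$. Consequently, for any solution $(\mu_t)$ of the martingale problem and any $\vec\lambda = (\lambda_1,\dots,\lambda_K) \in [0,1]^K$ that includes $1$, the $[0,1]^{K+1}$-valued process
\[
Y^{\vec\lambda}_t := \bigl(|\mu_t|,\psi_{\lambda_1}(\mu_t),\dots,\psi_{\lambda_K}(\mu_t)\bigr)
\]
is Markov and solves a martingale problem on the compact space $[0,1]^{K+1}$ for the pure-jump generator
\[
\tilde{\mathcal{L}}^{\vec\lambda} F(y) = \int_0^\infty [F(T^x(y)) - F(y)]\,\rho x^{-\alpha}\,dx,
\]
where $T^x : [0,1]^{K+1} \to [0,1]^{K+1}$ is the explicit Lipschitz map read off from the Laplace-transform identity. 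A direct computation gives uniform-in-$x$ Lipschitz constants for $T^x$ together with $\int_0^\infty \|T^x - \mathrm{id}\|_\infty\,\rho x^{-\alpha}\,dx < \infty$, so that $Y^{\vec\lambda}$ can be realised as the pathwise-unique solution of the Poisson SDE $dY_t = \int_0^\infty (T^x(Y_{t-}) - Y_{t-})\,N(dt,dx)$ driven by a PPP $N$ of intensity $\rho x^{-\alpha}\,dx \otimes dt$, yielding uniqueness in law of $Y^{\vec\lambda}$.

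Since $\{\psi_\lambda : \lambda \in [0,1]\}$ is point-separating on $\mathcal{Z}$ (the generating function determines the sequence $(z(i))$), uniqueness of the law of $Y^{\vec\lambda}_t$ for every $\vec\lambda$ pins down the one-dimensional distribution of $\mu_t$. Applying the standard Markov-property-from-uniqueness argument (Ethier--Kurtz, Ch.~4) then upgrades this to uniqueness of all finite-dimensional distributions and hence of the path law on $D([0,\infty),\mathcal{Z})$. I expect the main technical obstacle to be the uniqueness step above: one must rigorously derive the martingale problem for $Y^{\vec\lambda}$ from that for $\mu$ and establish well-posedness of the Poisson SDE uniformly in the small-$x$ singularity, despite the infinite total jump rate $\int_0^\infty \rho x^{-\alpha}\,dx = \infty$. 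Both issues are ultimately controlled by the integrable total variation $\int_0^\infty \|T^x - \mathrm{id}\|_\infty\,\rho x^{-\alpha}\,dx < \infty$.
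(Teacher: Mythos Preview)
Your overall architecture matches the paper's exactly: establish the closed form for $\psi_\lambda\circ\mathcal{C}^x$ (Proposition~\ref{prop1}), use it to reduce integrability to an $O(1/x)$ bound at infinity (Lemma~\ref{lem:endo}), project any solution onto the finite-dimensional process $Y^{\vec\lambda}=\psi_{\vec\lambda}(\mu)$, prove uniqueness there, recover $\mu$ from its generating functions, and defer existence to Theorem~\ref{thm:conv-process}.

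The one substantive difference is how you handle uniqueness of the finite-dimensional problem. The paper invokes Bass's criterion (Theorem~5.1 in \cite{Bas04}), checking that $\bz\mapsto\int_B\frac{|\psi_{\vec\lambda}(\bz)-\mathcal{C}^x\psi_{\vec\lambda}(\bz)|^2}{1+|\psi_{\vec\lambda}(\bz)-\mathcal{C}^x\psi_{\vec\lambda}(\bz)|^2}\rho x^{-\alpha}dx$ is bounded and continuous. You instead propose to realise $Y^{\vec\lambda}$ as the pathwise-unique solution of a Poisson SDE driven by a PPP of intensity $\rho x^{-\alpha}dx$. This is legitimate---indeed it is essentially the flow representation the paper constructs later in Theorem~\ref{thm:flow-representation}, citing \cite{BertoinLeGall2} for well-posedness---and has the advantage of giving a concrete construction rather than an abstract uniqueness criterion. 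The paper's route via Bass is shorter because it avoids building the SDE solution; your route is more explicit and anticipates Section~\ref{sect:selfsimilarity}.

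One small correction: you locate the obstruction at the ``small-$x$ singularity'', but since $\alpha\in(0,1)$ the measure $\rho x^{-\alpha}dx$ is integrable near $0$ and diverges at $\infty$. The infinite jump rate comes from large $x$, and it is precisely your $O(1/x)$ bound on $\|T^x-\mathrm{id}\|$ (and on the Lipschitz constant of $T^x-\mathrm{id}$, which you should also check) that makes $\int_1^\infty\|T^x-\mathrm{id}\|_{\mathrm{Lip}}\,\rho x^{-\alpha}dx<\infty$ and hence gives the required $L^1$-Lipschitz condition for pathwise uniqueness. This does not affect the validity of your argument, only the attribution of where the work lies.
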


 Theorem \ref{Mgprob} characterizes the limiting process in the following result.
 
\begin{theorem} \label{thm:conv-process}
Suppose that Assumptions \ref{def:finite-xi} hold.
If $\mu_0^n=\bz_n \to \bz \in{\cal Z}$,
then for every $T>0$, 
\begin{equation*}
(\mu_t^n; t\geq0)  \Longrightarrow(\mu_t; t\geq0) \textrm{ in $D([0, T], {\cal Z})$}, 
\end{equation*}
where the process $(\mu_t; t\geq0)$ is the unique solution to the martingale problem \eqref{MP} with initial condition ${\bf z}$.
\end{theorem}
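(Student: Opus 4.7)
I would follow the standard Stroock--Varadhan martingale-problem route: (i) tightness of the laws of $(\mu^n)$ in $D([0,T],\mathcal{Z})$; (ii) generator convergence $\mathcal{A}_n f(\bz_n) \to \mathcal{A} f(\bz)$ for every $f \in \mathcal{T}$ and every $\bz_n \to \bz$; (iii) identification of any subsequential limit as a solution of \eqref{MP}; (iv) conclusion via the uniqueness part of Theorem \ref{Mgprob}.

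For (ii), using Assumption \ref{def:finite-xi}(1) to replace $R(k)$ by $\rho k^{-\alpha}(1+o(1))$ and setting $x = k/n$, the discrete generator \eqref{generator-discrete} rewrites as a Riemann-type sum
\begin{equation*}
\mathcal{A}_n f(\bz_n) \;=\; \sum_{k\geq 1} \rho\,(k/n)^{-\alpha}\bigl[\mathbb{E}(\Lambda^{k,n} f(\bz_n)) - f(\bz_n)\bigr]\,\tfrac{1}{n} \;+\; o(1),
\end{equation*}
which should approximate $\mathcal{A} f(\bz)=\int_0^\infty \rho x^{-\alpha}[\mathcal{C}^x f(\bz)-f(\bz)]\,dx$. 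Pointwise convergence of the summand along $k/n \to x > 0$ reduces, since $f = F \circ \psi_{\vec\lambda}$ with $F$ Lipschitz, to proving the distributional limit $\psi_{\vec\lambda}(\Lambda^{k,n}(\bz_n)) \to \psi_{\vec\lambda}(\mathcal{C}^x(\bz))$. I would make the heuristic of Section \ref{heuristics} rigorous by conditioning on the weights $(w_1,\dots,w_k)$, invoking $s_k/k \to \mathbb{E}(w_1)$ a.s., and applying a conditional Poisson approximation to the multinomial counts of balls of each size per box with parameters $z_n(i) w_j/(s_k/k)$; the limiting compound Poisson per box reproduces the formula in \eqref{defCx}.

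For (i), the state space $\mathcal{Z}$ is in fact compact in $\ell^1$: the constraint $\sum_i i z(i) = 1$ forces $\sum_{i\ge N} z(i) \le 1/N$, which together with $z(i) \le 1/i$ yields sequential compactness by a diagonal argument, so compact containment is automatic. Since the family $\{\psi_\lambda : \lambda \in [0,1]\}$ separates points of $\mathcal{Z}$ (each $z(i)$ is recovered from the power series $\lambda \mapsto \psi_\lambda(\bz)$), a Jakubowski-type criterion reduces tightness to that of the real-valued projections $(f(\mu_t^n))_t$ for $f \in \mathcal{T}$, which in turn follows from Aldous' criterion once the uniform bound $\sup_n \sup_{\bz_n}|\mathcal{A}_n f(\bz_n)| < \infty$ is established in step (ii). The Skorokhod $M_1$ topology is needed because a single coagulation can induce a macroscopic $\ell^1$-jump, whereas the cumulative dynamics is monotone-like in the relevant coordinates, which $M_1$ accommodates. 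Steps (iii) and (iv) are then standard: passage to the limit in the $\mathcal{A}_n$-martingale identity against bounded continuous functionals of the past, using the generator convergence and dominated convergence, shows that any subsequential limit solves \eqref{MP}; Theorem \ref{Mgprob} then upgrades this to convergence of the entire sequence.

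\textbf{Main obstacle.} The delicate point is the uniform-in-$n$ domination underlying step (ii): I need to bound $\mathbb{E}(\Lambda^{k,n}f(\bz_n))-f(\bz_n)$ by a function of $x=k/n$ alone that is integrable against $\rho x^{-\alpha}\,dx$. Near $x=0$ the increment has to vanish fast enough to absorb the singularity $x^{-\alpha}$, which I would control via the Lipschitz constant of $F$ together with Assumption \ref{def:finite-xi}(2), that bounds the second moment of $w_1$ and hence the variance of the Poisson approximations. At large $x$, a birthday-type estimate shows that the probability of any non-trivial collision is of order $n\,\mathbb{E}((p_1^{(k)})^2) = O(1/k)$, yielding integrable tails. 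This is the step where the heuristic coalescence picture has to be turned into quantitative estimates and where both assumptions in \ref{def:finite-xi} are essentially used.
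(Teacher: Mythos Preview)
Your overall route---tightness, generator convergence, identification of subsequential limits as solutions of \eqref{MP}, then uniqueness from Theorem~\ref{Mgprob}---is exactly the paper's. Two points need correction, and the tightness step is handled differently.

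First, $\mathcal{Z}$ is \emph{not} compact in $\ell^1$: take $\bz_n=\frac{1}{n}e_n$, which lies in $\mathcal{Z}$ but converges to $0\notin\mathcal{Z}$. Your tail estimate does show that the closure $\{z\ge 0:\sum_i i z(i)\le 1\}$ is compact, which suffices for compact containment, but you then owe an argument that the limit process stays in $\mathcal{Z}$. Second, your ``main obstacle'' has the two ends of the integral reversed. Near $x=0$ the increment $\mathcal{C}^x f(\bz)-f(\bz)$ does \emph{not} vanish (few boxes means drastic coagulation); integrability there is free because $\alpha<1$ and the increment is bounded by $2\|F\|_\infty$. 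The real work is at $x\to\infty$, where the paper bounds the tail via the expected number of lost blocks (Lemma~\ref{lemma-combin} and Lemma~\ref{lemma-Rna}), which is your birthday estimate.

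On tightness, the paper does not use Jakubowski/Aldous. It maps $\mu^n$ to the vector of partial sums $(\sum_{i\le K}\mu^n(i))_K$, observes that every coordinate is monotone non-increasing in $t$, and applies Whitt's $M_1$ oscillation criterion: the $M_1$ modulus is controlled by $|\mu^n_t|-|\mu^n_{t+\delta}|$, whose expectation is bounded via the uniform generator bound (Corollary~\ref{cor:ui}). This monotonicity trick is what buys the $M_1$ topology cheaply. Your route through Aldous for the real projections $\psi_\lambda(\mu^n)$ may well work (and would in fact give $J_1$), but you would need $\mathcal{T}$ to satisfy the hypotheses of a Jakubowski-type theorem on the non-compact $\mathcal{Z}$. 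Finally, note that your generator-convergence sketch covers only the Poissonisation of box occupancies (first moment); since $f$ sits outside the expectation in $\mathbb{E}(\Lambda^{k,n}f(\bz_n))$, you also need concentration of $\Lambda^{k,n}(\bz_n)$ around its mean, which the paper handles separately through a two-urn covariance estimate (Lemma~\ref{lemma-secondmoment1}) and the three-term decomposition \eqref{eq:generator2-3terms}.
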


\subsection{Self-similarity}

The second part of this paper is devoted to the study of the limiting process $(\mu_t; t\geq0)$ characterized in Theorem \ref{Mgprob}. We prove that it is an infinite dimensional self-similar process, with negative index $\beta:= \alpha - 1 \in(-1,0)$ (Proposition \ref{prop:self-similar}). We can characterize its infinite dimensional Lamperti-Kiu transform.

\blue{The fact that $(\mu_t; t\geq0)$ is self-similar is inherited from the regular tail behavior of $R(k)$ (which is reflected by the $x^{-\alpha}$ in the generator (equation \eqref{genA}) together with the fact that for any positive constant $\gamma$ and for every $\ell \in \N$, 
$\gamma \cC^{x}( \bz)(\ell)  =  \cC^{\gamma x} ( \gamma\bz)$  (see equation \eqref{defCx}). 
}

To characterize this transformation, first consider 
the limiting block counting process $(|\mu_t|;t\geq0)$. According to Proposition \ref{prop:self-similar},  
$(|\mu_t|;t\geq0)$ is a non-increasing self-similar positive Markov process with parameter $\beta$.
The standard Lamperti transform tells us that such a process is identical in law to the exponential of a time-changed subordinator. 
Recall $(A_t ;t>0)$ and $ (\xi_t  ;t>0)$ defined in \eqref{eq:time-change}.
Then $(\xi_t; t\geq 0 )$ is a subordinator
 and $(|\mu_t|; t\geq0)$ can be recovered 
by the relation $ |\mu_t| =  \exp(-\xi_{\tau_t})$, where $\tau_t:= \inf\{u>0 \ : \ \int_0^u \exp(\beta \xi_s) ds>t\}$.

Let us turn to the infinite dimensional self-similar process $(\mu_t; t\geq0)$. Let $S := \{ \bz\in \ell^1(\R_+) : |\bz|=1 \}$
be the unit sphere in $\ell^1(\R_+)$. 
The idea  of the infinite dimensional Lamperti-Kiu transform is to decompose the process into its ``radial part'' $(|\mu_t|; t\geq0)$ (the block counting process) and its ``spherical part'' 
\[\bigg(\left(\frac{\mu_t(1)}{|\mu_t|}, \frac{{\mu_t(2)}}{{|\mu_t|}}, \dots \right); t\geq0 \bigg) \in S,\] which encodes the evolution of the asymptotic frequencies of singletons, pairs, etc.
In the spirit of the one-dimensional case, the process can be related to a time-changed Markov additive process (MAP, see \cite{cinlar}).
Theorem \ref{thm:lamperti}  is a natural extension of Theorem 2.3. in  \cite{Alili}, established in finite dimension.

\begin{theorem}[Lamperti-Kiu transform]\label{thm:lamperti}
For every $t>0$, define the stopping time $(A_t; t\geq0)$ as in (\ref{eq:time-change})
and $\bigg((\xi_t,\theta_t); t\geq0\bigg)$ as the process valued in $\R_+\times S$ such that
\[ \xi_t \ := \ -\log\bigg( |\mu_{A_t}| \bigg), \ \ \theta_t \ := \ \frac{1}{|\mu_{A_t}|} \mu_{A_t}. \]
Then, $\bigg((\xi_t,\theta_t); t\geq0\bigg)$ is a MAP with respect to the filtration 
$({\cal G}_t;t\geq0) : = ({\cal F}_{A_t};t\geq0)$, i.e.
$(\xi_t; t\geq0)$ is a subordinator and \[ \E_{\xi_0,\theta_0}\left( f( \xi_{t+s} -  \xi_s,  \theta_{t+s}\right) \ | \  {\cal G}_s) \ = \  \E_{0, \theta_s}\left( f(\xi_{t},  \theta_{t}) \right). \] 
 Further, $(\mu_t; t\geq0)$ can be recovered from its Lamperti-Kiu transform through the formula
\begin{equation}  \mu_t = \theta_{\tau_t} \exp(-\xi_{\tau_t})  \label{eq:l-ch-t}\end{equation}
where $\tau_t:= \inf\{u>0  :  \int_0^u \exp(\beta \xi_s) ds>t\}$.
\end{theorem}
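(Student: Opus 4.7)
The plan is to follow the classical multidimensional Lamperti--Kiu framework (Theorem 2.3 of \cite{Alili}), adapted to our infinite-dimensional self-similar process. The argument decomposes into three steps: identify $(\xi_t)$ as a subordinator via the one-dimensional Lamperti theorem applied to $(|\mu_t|)$; derive the MAP property via a joint application of self-similarity and the strong Markov property; and verify the inversion formula.

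I would first show that $(|\mu_t|; t\geq 0)$ is itself a non-increasing positive self-similar Markov process of index $\beta$. A direct computation from \eqref{defCx} gives $|\cC^x(\bz)| = x(1 - \E[e^{-|\bz|\Gamma/x}])$, and Jensen's inequality together with $1-e^{-u}\leq u$ yields $|\cC^x(\bz)|\leq |\bz|$, so that $(|\mu_t|)$ is non-increasing. Moreover this expression depends on $\bz$ only through $|\bz|$, so that for any test function of the form $f(\bz) = g(|\bz|)$ in $\mathcal{T}$, $\cA f(\bz)$ is a function of $|\bz|$ alone. Uniqueness of the martingale problem (Theorem \ref{Mgprob}) then implies that $(|\mu_t|; t\geq 0)$ is autonomously Markovian; self-similarity with index $\beta$ is inherited from $(\mu_t)$ via Proposition \ref{prop:self-similar}. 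The classical one-dimensional Lamperti theorem applied to this non-increasing pssMp identifies $(\xi_t; t\geq 0)$ as a subordinator.

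For the MAP property, I would fix $s\geq 0$ and observe that the continuous $(\cF_u)$-adapted additive functional $u\mapsto \int_0^u |\mu_r|^{-\beta}dr$ makes $A_s$ an $(\cF_u)$-stopping time, so that $\cG_s = \cF_{A_s}$. Applying the strong Markov property of $(\mu_t)$ at $A_s$ jointly with self-similarity of scaling constant $c = |\mu_{A_s}| = e^{-\xi_s}$ yields that, conditionally on $\cG_s$,
\begin{equation*}
(\mu_{A_s+r};\, r\geq 0) \ \stackrel{d}{=}\ \bigl(e^{-\xi_s}\mu'_{e^{\beta\xi_s}r};\, r\geq 0\bigr),
\end{equation*}
where $(\mu'_r; r\geq 0)$ is an independent copy of the process started from $\theta_s$. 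A change of variable $w' = e^{\beta\xi_s}w$ in the integral defining $A_{s+t} - A_s$ gives $A_{s+t}-A_s = e^{-\beta\xi_s}A'_t$, hence $\mu_{A_{s+t}} = e^{-\xi_s}\mu'_{A'_t}$; separating modulus and direction yields $\xi_{s+t}-\xi_s = \xi'_t$ and $\theta_{s+t} = \theta'_t$, which is the MAP property. The inversion formula \eqref{eq:l-ch-t} finally follows from the observation that $\tau$, as defined in the statement, coincides with the continuous inverse of $A$: a standard change of variable using $|\mu_r| = e^{-\xi_{B(r)}}$ with $B = A^{-1}$ shows $A_{\tau_t} = t$, so that $\mu_t = \mu_{A_{\tau_t}} = e^{-\xi_{\tau_t}}\theta_{\tau_t}$.

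The main obstacle is the rigorous joint application of the strong Markov property and self-similarity in the second step. The cleanest path is via the martingale problem characterization: one verifies that the shifted and rescaled process $(e^{\xi_s}\mu_{A_s + e^{-\beta\xi_s}t};\, t\geq 0)$ solves, conditionally on $\cG_s$, the same martingale problem with initial condition $\theta_s$, and the distributional identity above then follows from uniqueness (Theorem \ref{Mgprob}).
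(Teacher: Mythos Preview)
Your proposal is correct and follows essentially the same approach as the paper. The paper's own proof is extremely terse: it simply invokes Proposition~\ref{prop:self-similar} for self-similarity and then states that the proof of Theorem~2.3 in \cite{Alili} (the finite-dimensional Lamperti--Kiu transform) goes through verbatim in $\ell^1(\R_+)$. What you have written is precisely a sketch of that verbatim argument---the one-dimensional Lamperti representation for the radial part, the strong Markov/self-similarity combination for the MAP property, and the inversion via $\tau = A^{-1}$---so you have effectively unpacked what the paper leaves implicit. Your observation that the strong Markov step is cleanest via the martingale problem characterization is apt, and mirrors the spirit of the paper's treatment of self-similarity in Proposition~\ref{prop:self-similar}.
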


\subsection{Site Frequency Spectrum}

 The third part of this work is devoted to the asymptotics of the SFS of the family of general Dirichlet $\Xi$-coalescents, in the limit of large $n$. 
 \blue{Consider the infinite sites model, where it is assumed that mutations occur according to a Poisson Point Process
  of intensity $r>0$ over the coalescent tree and that each new mutation falls in a new site so that all the mutations can be observed in the generic data.} Define the rescaled SFS $F_n=(F_n(1), F_n(2),\dots)$ where 
\[\forall i\leq n-1, \ \ F_n(i)= \frac{1}{n^{{\alpha}}} \#\{\mbox{mutations carried by $i$ individuals}\}. \]
\blue{Under the infinite sites model, there is a very close relation between the SFS and the block counting process of the coalescent tree. More precisely, conditional on the coalescent, the  number of segregating mutations affecting $i$ individuals of the sample is given by a Poisson random variable with parameter $r \int_0^{\hat {\bf T}_n} \hat \mu^n_s(i) ds$, where $\hat {\bf T}_n$ denotes the time to the most recent common ancestor (height) of the coalescent tree.
}

\begin{theorem}\label{thm:cv-sfs}
Let $(\xi_t,\theta_t)$ be defined as in Theorem \ref{thm:lamperti}. 
We have
\begin{eqnarray*}
\lim_{n\to\infty} F_n & = & r  \int_0^\infty \mu_s ds \ \\
& = & r \int_0^\infty \theta_u \exp((\alpha-2)\xi_u) du,
\end{eqnarray*}
where the convergence is meant in the weak sense with respect to the  $\ell^1(\R_+)$ topology. In particular, the (rescaled) total number of mutations $ |F_n|$
is asymptotically described by the exponential functional of a subordinator \cite{BY}, i.e.,
$$
\lim_{n\to\infty} |F_n| \ = \  r \int_0^\infty  \exp((\alpha-2)\xi_u) du.
$$
\end{theorem}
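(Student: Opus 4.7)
The plan is to exploit the Poisson structure of the SFS conditional on the coalescent, combined with Theorems \ref{thm:conv-process} and \ref{thm:lamperti}. Under the infinite sites model, conditional on the unrescaled genealogy $(\hat\mu^n_s)_{s\geq 0}$, the number $M_n(i)$ of mutations carried by exactly $i$ individuals is Poisson with mean $r\int_0^{\hat{\bf T}_n}\hat\mu^n_s(i)\,ds$. Substituting $s = tn^{\alpha-1}$ and using $\hat\mu^n_s(i) = n\mu^n_{sn^{1-\alpha}}(i)$, this mean becomes $rn^\alpha I^n_i$ with
\[
I^n_i := \int_0^{{\bf T}_n}\mu^n_t(i)\,dt, \qquad {\bf T}_n := n^{1-\alpha}\hat{\bf T}_n.
\]
Hence $F_n(i) = n^{-\alpha}M_n(i)$ has conditional mean $I^n_i$ and conditional variance $I^n_i/n^\alpha$, so a Chebyshev bound yields $F_n(i) - I^n_i \to 0$ in probability as soon as $I^n_i$ is tight. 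The same computation applied to the total $|F_n|$ reduces the whole problem to identifying the limit of the random vectors $I^n = (I^n_i)_{i\geq 1}$ as elements of $\ell^1(\R_+)$.

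The convergence $I^n_i \Rightarrow \int_0^\infty\mu_t(i)\,dt$ is the technical heart of the argument. Theorem \ref{thm:conv-process} gives $\mu^n\Rightarrow\mu$ in $D([0,T],{\cal Z})$ for each fixed $T$, and since $|\mu^n_\cdot|$ is non-increasing and bounded by $1$ while integration over a compact interval is continuous in the $M_1$ topology, we get $\int_0^T\mu^n_t(i)\,dt \Rightarrow \int_0^T\mu_t(i)\,dt$ for every fixed $T$. The delicate point is tail control past $T$. Here I would invoke Theorem \ref{thm:lamperti}: because $\beta=\alpha-1\in(-1,0)$ and $\xi$ is a subordinator drifting to $+\infty$, the exponential functional $T^* := \int_0^\infty e^{\beta\xi_s}\,ds$ is a.s.\ finite, and the limit $|\mu_t| = e^{-\xi_{\tau_t}}$ strictly decreases from $1$ to $0$ on $[0,T^*)$ with $\mu$ absorbed at the origin afterwards. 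I would then show ${\bf T}_n\Rightarrow T^*$ by applying the convergence of $|\mu^n|$ to hitting times of small levels $\epsilon>0$ (which are continuous $M_1$-functionals on paths with strictly decreasing limits) and then letting $\epsilon\downarrow 0$, and couple this with the crude bound $\int_T^{{\bf T}_n}|\mu^n_t|\,dt \leq |\mu^n_T|({\bf T}_n - T)$ to control the tail uniformly in $n$.

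Once the componentwise convergence $I^n_i\Rightarrow\int_0^\infty\mu_t(i)\,dt$ is established, the exponential-functional form follows from the change of variables $u=\tau_t$: since $t=\int_0^u e^{\beta\xi_s}\,ds$ gives $dt = e^{\beta\xi_u}\,du$, and $u$ sweeps $[0,\infty)$ as $t$ sweeps $[0,T^*)$, substitution into $\mu_t = \theta_{\tau_t}e^{-\xi_{\tau_t}}$ yields
\[
\int_0^{T^*}\mu_t\,dt = \int_0^\infty \theta_u e^{-\xi_u}\cdot e^{\beta\xi_u}\,du = \int_0^\infty \theta_u e^{(\alpha-2)\xi_u}\,du.
\]
The upgrade from componentwise to weak convergence in $\ell^1(\R_+)$ will rely on the joint convergence $(F_n(1),\dots,F_n(k),|F_n|) \Rightarrow (F(1),\dots,F(k),|F|)$ for every $k$ (obtained by including the total in the preceding arguments): after passing to a.s.\ convergence via Skorokhod embedding, pointwise convergence of non-negative sequences together with convergence of $\ell^1$ norms implies $\ell^1$ convergence (a discrete Scheff\'e lemma). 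The main obstacle is the tail control in the middle paragraph: Theorem \ref{thm:conv-process} only provides convergence on compact time windows, and tying ${\bf T}_n$ to the random absorption time $T^*$ together with a uniform bound on $\int_T^{{\bf T}_n}|\mu^n_t|\,dt$ is the step that prevents a one-line reduction of the SFS to the integral of the limit process.
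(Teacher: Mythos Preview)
Your overall architecture---reduce $F_n$ to the random integrals $I^n_i$ via the conditional Poisson structure and Chebyshev, pass to the limit on compact windows using Theorem~\ref{thm:conv-process}, control the tail beyond $T$, then apply the Lamperti change of variables $u=\tau_t$, and finally upgrade to $\ell^1$ convergence via Scheff\'e---is the same as the paper's. The tail control, however, contains a genuine error: your claim that ${\bf T}_n \Rightarrow T^*$, with $T^*=\int_0^\infty e^{\beta\xi_s}\,ds$ the finite absorption time of $|\mu|$, is false. In fact ${\bf T}_n\to\infty$ in probability. The last few mergers of the discrete coalescent (for instance from two blocks to one) happen at a rate that does not scale with $n$, so they take $O(1)$ unrescaled time and hence $O(n^{1-\alpha})\to\infty$ rescaled time; equivalently, coupling all coalescents through the same Poisson point process gives $\hat{\bf T}_n\geq\hat{\bf T}_2>0$ and thus ${\bf T}_n=n^{1-\alpha}\hat{\bf T}_n\to\infty$. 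Your hitting-time argument shows only $\lim_{\epsilon\downarrow 0}\lim_{n\to\infty}(\text{hitting time of }\epsilon)=T^*$, and the limits do not commute. Consequently the bound $\int_T^{{\bf T}_n}|\mu_t^n|\,dt\leq |\mu_T^n|({\bf T}_n-T)$ is vacuous, since the second factor diverges.

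The paper replaces this step by an $L^1$ estimate (Lemma~\ref{lem:bd}): introducing the stopping times $\tau_j=\inf\{t>\tau_{j-1}:\text{a $k$-event with }k\leq n/2^j\text{ occurs}\}$, one has $|\hat\mu_t^n|\leq n/2^j$ on $(\tau_j,\infty)$ and $\E(\tau_j-\tau_{j-1})\leq C(2^j/n)^{1-\alpha}$, whence $\sup_n\E\!\big(\int_0^{{\bf T}_n}|\mu_s^n|\,ds\big)<\infty$ by summing a geometric series. The tail bound $\limsup_{A\to\infty}\limsup_{n}\E\!\big(\int_A^{{\bf T}_n}|\mu_s^n|\,ds\big)=0$ then follows from the strong Markov property at time $A$: conditional on $n|\mu_A^n|=j$, the remaining integral is $n^{-\alpha}\int_0^{\hat{\bf T}_j}|\hat\mu_s^j|\,ds$, whose expectation is at most $C(j/n)^\alpha$ by the uniform bound; splitting on $\{j\leq\epsilon n\}$ versus $\{j>\epsilon n\}$ and using $|\mu_A^n|\Rightarrow|\mu_A|$ with $|\mu_A|\to 0$ as $A\to\infty$ finishes the job. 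This is precisely the step you identified as the main obstacle, and it genuinely requires a quantitative input of the type in Lemma~\ref{lem:bd} rather than control on the location of ${\bf T}_n$.
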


Observe that a similar rescaling, in $n^{\alpha}, \alpha <1$  appears for the lower part of the spectrum (small values of $i$) of Beta-coalescents coming down from infinity \cite{BBS}, \blue{although $\alpha$ was used in a different parametrization there}.
\blue{Also note that, in most coalescent models, the rescaling order is not the same all along the vector. As an example, four different renormalizations are listed in the study the SFS of the Bolthausen-Sznitman coalescent, \cite{KSJW}.}

\subsection{Outline of the paper}
The rest of the paper is organized as follows. In Section \ref{sect-urnestimates} we use Stein's method to derive  bounds for the total variation distance between vectors obtained by throwing balls into urns and their Poisson approximations. 
These results are  used in Section \ref{sect:coupled} to prove the convergence of the generator of the multidimensional block counting process $(\mu^n_t; t\geq0)$ (defined in equation \eqref{generator-discrete})  to the generator of the limiting process (defined in \eqref{genA}). 
Section \ref{sect-def-limitingprocess} is devoted to the study of the martingale problem \eqref{MP}. Before proving the uniqueness of its solution (Theorem \ref{Mgprob}), we analyze the coagulation operator $\cC^x$ (some additional technical results can be found in Appendix B). 
In Section \ref{sect:convergence}, we prove the convergence of $(\mu^n_t; t\geq0)$ to the unique solution of the martingale problem (Theorem \ref{thm:conv-process}). 
In Section \ref{sect:selfsimilarity}, we prove that the limiting process is self-similar and we characterize its Lamperti-Kiu transform (Theorem \ref{thm:lamperti}).
We also provide an additional representation of the process using stochastic flows. 
Finally, in Section \ref{sect-SFS}, we study some asymptotics on the branch lengths which allow us to prove Theorem \ref{thm:cv-sfs}.
Appendix A contains some moment estimates on the mass partition components $p_j^{(k)}$ (defined in \eqref{masspart}) that are used in several proofs.

\section{Urn estimates}
\label{sect-urnestimates}

Let $E$ be a discrete space equipped with the usual $\sigma$-field ${\cal F}$ generated from the singletons.
Recall that the total variation distance between two measures $\nu_1,\nu_2$ on $E$ is given by
\begin{equation}\label{dTV} d_{TV}(\nu_1,\nu_2) \ = \ \sup_{A\in{\cal F}} |\nu_1(A)-\nu_2(A)| =\frac{1}{2} \sum_{x\in E} |\nu_1({x}) - \nu_2(x)|.  \end{equation}
For a random variable $X$, we denote by ${\mathfrak L}(X)$ its law.

In this section we  recall and establish  some bounds for the total variation distance between binomial variables and vectors obtained by throwing balls into urns and their Poisson approximations. Those results are mainly obtained using  Stein's method \cite{Ross}.

\subsection{Undistinguishable balls }
\label{sect-urns-1c}
We start by considering $n$ indistinguishable balls that are allocated at random to $k$ urns. 
For $i\in[k]$, let $p_i$ be the probability of being allocated to the $i$th urn. Let $X_i$ be the number of balls allocated to urn $i$ so that $X_i$ has \blue{a} binomial distribution with parameters $n$ and $p_i$ and $\sum_{i=1}^k X_i=n$. 

\begin{lemma}
\label{lemma-1urn-1color}
Let $y>0$. Let $Y$ be a Poisson distributed random variable with parameter $yn/k$. Then,
\[ d_{TV}\left({\mathfrak L} (X_1), \ {\mathfrak L}(Y)\right) \leq \min\{p_1, p_1^2 n \} + \frac{n}{k}|y - kp_1|. \]
\end{lemma}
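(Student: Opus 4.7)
The plan is to use the triangle inequality with an intermediate Poisson whose parameter matches the binomial mean exactly. Specifically, let $Y' \sim {\rm Poisson}(n p_1)$, so that
\[
d_{TV}({\mathfrak L}(X_1), {\mathfrak L}(Y)) \ \leq \ d_{TV}({\mathfrak L}(X_1), {\mathfrak L}(Y')) + d_{TV}({\mathfrak L}(Y'), {\mathfrak L}(Y)).
\]
The second term corresponds to the second summand in the claimed bound, and the first term must be shown to be at most $\min\{p_1, n p_1^2\}$.

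For the Poisson-to-Poisson term, I would use the standard superposition coupling: if $\lambda_1 \le \lambda_2$, one can write a ${\rm Poisson}(\lambda_2)$ variable as the independent sum of a ${\rm Poisson}(\lambda_1)$ and a ${\rm Poisson}(\lambda_2 - \lambda_1)$, so the two differ with probability at most $1 - e^{-|\lambda_1 - \lambda_2|} \le |\lambda_1 - \lambda_2|$. Applied with $\lambda_1 = n p_1$ and $\lambda_2 = y n / k$ (after reordering if needed), this gives
\[
d_{TV}({\mathfrak L}(Y'), {\mathfrak L}(Y)) \ \leq \ |n p_1 - y n / k| \ = \ \tfrac{n}{k}|k p_1 - y|,
\]
which is exactly the second term in the statement.

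For the binomial-to-Poisson term, the natural tool is the Stein--Chen method applied to $X_1$, viewed as a sum of $n$ i.i.d. Bernoulli$(p_1)$ variables. The sharp form of the bound that comes out of Stein--Chen (see \cite{Ross}) is
\[
d_{TV}({\rm Bin}(n,p_1), {\rm Poisson}(n p_1)) \ \leq \ \frac{1 - e^{-n p_1}}{n p_1} \cdot n p_1^2 \ = \ p_1 (1 - e^{-n p_1}),
\]
and combining this with the elementary inequality $1 - e^{-x} \le \min\{1, x\}$ for $x \ge 0$ gives $\min\{p_1, n p_1^2\}$. Adding the two estimates yields the lemma.

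The only point where I would need to be careful is in choosing the correct version of the Stein--Chen bound: the plain Le Cam inequality gives only the $n p_1^2$ side of the minimum, so to recover the $p_1$ side (relevant when $p_1 \gtrsim 1/n$) one must use the sharpened factor $(1 - e^{-np_1})/(n p_1)$ that comes from bounding the Stein equation's solution. Otherwise the argument is essentially a routine application of the triangle inequality plus two standard coupling estimates.
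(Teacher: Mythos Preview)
Your proposal is correct and follows essentially the same approach as the paper: both introduce the intermediate ${\rm Poisson}(np_1)$ variable, apply the triangle inequality, and handle the binomial--Poisson term via the Stein--Chen bound (Theorem~4.6 in \cite{Ross}). The only minor difference is that for the Poisson--Poisson term the paper cites an inequality of Roos~\cite{Roos}, whereas you obtain the same bound $|\lambda_1-\lambda_2|$ directly via the superposition coupling; this is a cosmetic distinction, not a different route.
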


\begin{proof}
Let $W$ be a Poisson random variable with parameter $p_1n$. Using triangular inequality, 
\begin{eqnarray*} 
d_{TV}\left({\mathfrak L}(X_1), \ {\mathfrak L}(Y)\right)
&\le& d_{TV}\left({\mathfrak L}(X_1), {\mathfrak L}(W)\right)  + d_{TV}\left({\mathfrak L}(W), {\mathfrak L}(Y)\right).
\end{eqnarray*}
For the first term in the RHS  we use the celebrated Chen-Stein inequality for the approximation of the total variation between Poisson and binomial random variables (see for example Theorem 4.6 in \cite{Ross}). 
For the second term we use an inequality for the total variation \blue{distance} between Poisson random variables with different means, that can be found in \blue{equation (5) of \cite{Roos}}. 
\end{proof}

Now we consider balls that are allocated to two different urns. 

\begin{lemma}
\label{lemma-2urns-1color}
Let $(W_1, W_2)$ be a pair of independent Poisson distributed random variables with respective parameters $p_1n$ and $p_2n$. 
Then,
\begin{eqnarray*}
 d_{TV}\left({\mathfrak L} (X_1, X_2), \ {\mathfrak L}(W_1, W_2)\right)&  \leq &   (p_1 + p_2)^2 n. 
 \end{eqnarray*}
\end{lemma}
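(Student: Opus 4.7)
The plan is to bound $d_{TV}\bigl({\mathfrak L}(X_1,X_2),{\mathfrak L}(W_1,W_2)\bigr)$ by interpolating through a distribution where the two coordinates are already independent, so that the problem splits into (i) decoupling the multinomial constraint and (ii) a pair of independent one-dimensional Poisson approximations that can be handled by Chen--Stein exactly as in Lemma \ref{lemma-1urn-1color}.

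Concretely, I would introduce $(\tilde X_1,\tilde X_2)$ where $\tilde X_i$ is $\textrm{Binomial}(n,p_i)$ and $\tilde X_1,\tilde X_2$ are independent. By the triangle inequality,
\begin{equation*}
d_{TV}\bigl({\mathfrak L}(X_1,X_2),{\mathfrak L}(W_1,W_2)\bigr)\ \le\ d_{TV}\bigl({\mathfrak L}(X_1,X_2),{\mathfrak L}(\tilde X_1,\tilde X_2)\bigr)+d_{TV}\bigl({\mathfrak L}(\tilde X_1,\tilde X_2),{\mathfrak L}(W_1,W_2)\bigr).
\end{equation*}

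For the second term I use the Chen--Stein bound marginally: since $\tilde X_1,\tilde X_2$ are independent and $W_1,W_2$ are independent, the joint total variation is dominated by the sum of the marginal total variations, each of which is bounded by $p_i^2 n$. This yields $p_1^2 n+p_2^2 n$.

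For the first term I write $X_i=\sum_{j=1}^n B_j^{(i)}$ with $(B_j^{(1)},B_j^{(2)})=(\mathbf 1\{Z_j=1\},\mathbf 1\{Z_j=2\})$ and i.i.d.\ $Z_j\in\{0,1,2\}$ of law $(1-p_1-p_2,p_1,p_2)$, and similarly $\tilde X_i=\sum_{j=1}^n V_j^{(i)}$ with $(V_j^{(1)},V_j^{(2)})$ an independent pair of Bernoullis with parameters $p_1,p_2$, independent across $j$. Since the pairs are independent in $j$ on each side, the tensorization of total variation together with monotonicity of $d_{TV}$ under mappings gives
\begin{equation*}
d_{TV}\bigl({\mathfrak L}(X_1,X_2),{\mathfrak L}(\tilde X_1,\tilde X_2)\bigr)\ \le\ n\cdot d_{TV}\bigl({\mathfrak L}(B_1^{(1)},B_1^{(2)}),{\mathfrak L}(V_1^{(1)},V_1^{(2)})\bigr).
\end{equation*}
A direct computation of the per-ball total variation via \eqref{dTV} gives $\tfrac12(p_1p_2+p_1p_2+p_1p_2+p_1p_2)=2p_1p_2$ (the four cells of $\{0,1\}^2$ each contribute $p_1p_2$), hence the first term is at most $2np_1p_2$.

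Adding the two contributions yields $n(p_1^2+2p_1p_2+p_2^2)=n(p_1+p_2)^2$, which is the claimed bound. The only delicate point is the per-ball total variation computation, which I expect to be routine; the main conceptual step is the choice of the intermediate independent-binomial law, which is what turns a genuinely multivariate Stein problem into two independent one-dimensional Chen--Stein estimates plus an explicit one-step coupling cost.
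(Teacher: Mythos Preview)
Your proof is correct and reaches the same constant, but the route is genuinely different from the paper's.

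The paper exploits a structural identity rather than a triangle inequality: it observes that $Z:=X_1+X_2$ is $\mathrm{Bin}(n,p_1+p_2)$ and that, conditional on $Z=z$, $(X_1,X_2)$ is multinomial $(z,\tfrac{p_1}{p_1+p_2},\tfrac{p_2}{p_1+p_2})$; the same conditional law arises for $(W_1,W_2)$ given $W:=W_1+W_2=z$ by Poisson thinning. Hence $d_{TV}({\mathfrak L}(X_1,X_2),{\mathfrak L}(W_1,W_2))=d_{TV}({\mathfrak L}(Z),{\mathfrak L}(W))$ \emph{exactly}, and one Chen--Stein application to this single one-dimensional pair gives $(p_1+p_2)^2 n$. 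Your argument instead interpolates through the independent-binomial pair $(\tilde X_1,\tilde X_2)$, paying a decoupling cost $2np_1p_2$ (your per-ball computation is correct) and then two marginal Chen--Stein costs $np_1^2+np_2^2$, which recombine to $n(p_1+p_2)^2$. The paper's approach is shorter and yields an equality at the intermediate step (potentially useful if one wanted lower bounds or sharper constants), while yours is more modular and does not rely on the Poisson-thinning coincidence, so it would transfer more readily to targets other than independent Poisson.
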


\begin{proof}
Our argument relies on the following observation
\begin{enumerate} 
\item 
$Z:=X_1+X_2$ follows a binomial distribution of parameters $(n, p_1 + p_2)$.
\item  
Conditioned on  $Z  = z$, $(X_1, X_2)$ has a multinomial law of parameters ($z, \frac{p_1}{p_1 + p_2}, \frac{p_2}{p_1+p_2})$.
\end{enumerate}
Analogously, we can consider a pair of random variables constructed as follows. Let $W$ be a Poisson random variable of parameter $(p_1 + p_2)n$. Conditionally on $W = z$,  $(W_1, W_2)$ has the same  multinomial law of parameters ($z, \frac{p_1}{p_1 + p_2}, \frac{p_2}{p_1+p_2})$.
For $z\in\mathbb{N}_0$, denote by $B_{z}$ a binomial random variable with parameters $(z,\frac{p_1}{p_1+p_2})$. Then
\begin{eqnarray*}
&& \sum_{ i,j\ge0} \bigg |\P((X_1, X_2) = (i,j)) - \P((W_1, W_2) = (i,j))  \bigg | \\
&=&  \sum_{ i,j\ge0}  \bigg |\P(Z = i+j) \P(B_{i+j} = i) - P(W= i+j) \P(B_{i+j} = i)  \bigg | \\
&=& \sum_{z = 0}^n \sum_{i = 0}^z    \bigg |\P(Z = z)  - P(W= z)  \bigg | \P(B_{z} = i ) =  \sum_{z = 0}^n    \bigg |\P(Z = z)  - P(W= z)  \bigg |. 
\end{eqnarray*}
 So, using again Chen-Stein's inequality  (Theorem 4.6 in \cite{Ross}), we conclude that
 \[d_{TV}\left({\mathfrak L} ( X_1,  X_2), \ {\mathfrak L}( W_1,  W_2)\right) = d_{TV}\left({\mathfrak L} (Z), \ {\mathfrak L}(W)\right) \leq  \min\{(p_1 + p_2), (p_1 + p_2)^2 n \}.  \]
\end{proof}

\subsection{Balls with distinct sizes}
\label{sect-urns-ncolors}
In this section we consider an urn problem where balls are distinguishable by their sizes. We start with a general result.
\begin{lemma}\label{lemma:from-1d-tokd}
Let $\ell \in \N$ and let  $(A^{(1)},\dots, A^{(\ell+1)})$  and $(B^{(1)},\dots, B^{(\ell+1)})$ be two vectors of independent random variables \blue{defined on} the same discrete subset $E$. Then,
\[ d_{TV}\left({\mathfrak L}(A^{(1)},\dots, A^{(\ell+1)}), \ {\mathfrak L}(B^{(1)},\dots, B^{(\ell+1)})\right) \ 
\leq \ \sum_{i=1}^{\ell+1} d_{TV}(\mathfrak L(A^{(i)}),\mathfrak L(B^{(i)})).\]
\end{lemma}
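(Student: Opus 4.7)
The plan is to use the coupling characterization of total variation distance: for any two probability measures $\nu_1, \nu_2$ on a discrete space, there exists a maximal coupling $(X, Y)$ of $(\nu_1, \nu_2)$ such that
\[ \P(X \neq Y) = d_{TV}(\nu_1, \nu_2). \]
This follows from the standard formula $d_{TV}(\nu_1,\nu_2) = 1 - \sum_{x \in E} \min(\nu_1(x), \nu_2(x))$ together with the explicit construction of the maximal coupling in the discrete setting.

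First, for each $i \in \{1, \dots, \ell+1\}$, I would invoke this characterization to obtain a coupling $(\tilde A^{(i)}, \tilde B^{(i)})$ with
\[ \tilde A^{(i)} \stackrel{d}{=} A^{(i)}, \quad \tilde B^{(i)} \stackrel{d}{=} B^{(i)}, \quad \P(\tilde A^{(i)} \neq \tilde B^{(i)}) = d_{TV}(\mathfrak{L}(A^{(i)}), \mathfrak{L}(B^{(i)})). \]
Then I would combine these couplings independently across $i$: define the joint distribution of $((\tilde A^{(i)}, \tilde B^{(i)}))_{i=1}^{\ell+1}$ so that the pairs are mutually independent. Because the $A^{(i)}$ (resp.\ $B^{(i)}$) are themselves independent, the resulting vectors $(\tilde A^{(1)}, \dots, \tilde A^{(\ell+1)})$ and $(\tilde B^{(1)}, \dots, \tilde B^{(\ell+1)})$ have the desired joint laws ${\mathfrak L}(A^{(1)}, \dots, A^{(\ell+1)})$ and ${\mathfrak L}(B^{(1)}, \dots, B^{(\ell+1)})$ respectively, and form a coupling of them.

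Finally, I would apply a union bound on the event $\{\exists\, i : \tilde A^{(i)} \neq \tilde B^{(i)}\}$:
\[ d_{TV}\!\left({\mathfrak L}(A^{(1)},\dots, A^{(\ell+1)}),\, {\mathfrak L}(B^{(1)},\dots, B^{(\ell+1)})\right) \leq \P\!\left( \bigcup_{i=1}^{\ell+1} \{\tilde A^{(i)} \neq \tilde B^{(i)}\} \right) \leq \sum_{i=1}^{\ell+1} d_{TV}(\mathfrak{L}(A^{(i)}), \mathfrak{L}(B^{(i)})), \]
which is the claimed bound. There is no real obstacle here; the only point that requires care is verifying that independent products of the marginal maximal couplings indeed yield the correct joint laws on both sides, which is immediate from the hypothesis that the $A^{(i)}$ (and $B^{(i)}$) are independent. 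Alternatively, one could avoid couplings altogether and argue directly by telescoping: writing the joint probability as a product and using $|a_1 \cdots a_{\ell+1} - b_1 \cdots b_{\ell+1}| \leq \sum_i |a_i - b_i|$ (valid when all entries lie in $[0,1]$) together with the sum-form \eqref{dTV} of total variation distance, but the coupling argument is cleaner.
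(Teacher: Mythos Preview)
Your proof is correct. The paper takes a different route: it works directly with the sum formula \eqref{dTV} and uses that the product map $(a_1,\dots,a_{\ell+1})\mapsto a_1\cdots a_{\ell+1}$ is $1$-Lipschitz on $[0,1]^{\ell+1}$ for the $L^1$ norm (equivalently, the telescoping bound $|\prod_j a_j-\prod_j b_j|\le\sum_i(\prod_{j<i}a_j)(\prod_{j>i}b_j)|a_i-b_i|$), then sums over $E^{\ell+1}$ so that the remaining product factors integrate to $1$. This is precisely the ``alternative'' you mention at the end. Your coupling argument is arguably more transparent and requires no bookkeeping of those product factors; the paper's computation is self-contained (no need to invoke existence of maximal couplings) but relies implicitly on retaining the probability weights in the telescoping so that the sum over the other coordinates collapses. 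Either way the result is immediate.
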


\begin{proof}
Let us denote by $\nu_{A^{(i)}}$ the distribution of $A^{(i)}$ and by $\nu_{B^{(i)}}$ the distribution of $B^{(i)}$, for $i\in[\ell+1]$.  
Since the product function $f(x_1,\dots, x_{\ell+1})=x_1 \cdot ... \cdot x_{\ell+1}$ is 1-Lipshitz on $[0,1]^{\ell+1}$ for the $L^1$ norm on $\R^{\ell+1}$, the second definition of the total variation in \eqref{dTV} implies that 
\begin{eqnarray*}
&&d_{TV}\left({\mathfrak L}(A^{(1)},\dots, A^{(\ell+1)}), \ {\mathfrak L}(B^{(1)},\dots, B^{(\ell+1)})\right) \\
 &&  \ \ = \frac{1}{2} \sum_{(x_1, \dots, x_{\ell +1})\in E^{\ell + 1}} |\nu_{A^{(1)}}({x_1})\dots \nu_{A^{(\ell+1)}}({x_{\ell+1}}) - \nu_{B^{(1)}}({x_1})\dots \nu_{B^{(\ell+1)}}({x_{\ell+1}})| \\
  &&  \ \ \le \frac{1}{2} \sum_{(x_1, \dots, x_{\ell +1})\in E^{\ell + 1}}  \sum_{i=1}^{\ell+1}|\nu_{A^{(i)}}(x_i) - \nu_{B^{(i)}}({x_i})| \ = \ \sum_{i=1}^{\ell+1}  d_{TV}(\mathfrak L(A^{(i)}),\mathfrak L(B^{(i)})).
\end{eqnarray*}
\end{proof}

Fix $\ell\in\N$. We consider balls with $\ell+1$ distinct sizes. For $i\in[\ell+1]$, suppose that there are $N_i$ balls of size $i$. 
We allocate at random these balls into $k$ urns, with probabilities $(p_1, \dots, p_k)$. 
Let $(X^{(1)}_j, \dots, X_j^{(\ell+1)})$ be the number of balls of each size that are allocated to urn $j$. 
The variables $(X^{(1)}_j, \dots, X_j^{(\ell+1)})$ are independent binomial random variables  with respective parameters  $(N_i,p_j)$.  
The following result is a direct consequence of Lemmas \ref{lemma-1urn-1color} and \ref{lemma:from-1d-tokd}.
\begin{corollary}\label{lemma-1urn-dcolors}
Let $(X^{(1)},\dots, X^{(\ell+1)})$ be independent binomial random variables   with respective parameters  $(N_1,p_1),\dots, (N_{\ell+1},p_1)$. Let $y_1>0$ and let  $(Y^{(1)},\dots, Y^{(\ell+1)})$ be independent Poisson random variables  with respective parameters  $y_1N_1/k,\dots, y_1N_{\ell+1}/k$. 
Let $\bar N=\sup_i N_i$.
We have 
\begin{eqnarray*}
 d_{TV}\left({\mathfrak L}(X^{(1)},\dots, X^{(\ell+1)}), \ {\mathfrak L}(Y^{(1)},\dots, Y^{(\ell+1)})\right) &\leq &  (\ell+1) \big(\min\{p_1, p_1^2 \bar N \} + \frac{\bar N}{k}|y_1 - kp_1|\big). 
 \end{eqnarray*}
\end{corollary}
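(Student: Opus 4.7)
The plan is to derive the bound by combining the two earlier lemmas, exactly as announced in the paragraph preceding the corollary statement. The first step is to invoke Lemma \ref{lemma:from-1d-tokd}, which applies because both tuples $(X^{(1)},\dots,X^{(\ell+1)})$ and $(Y^{(1)},\dots,Y^{(\ell+1)})$ consist of independent components (binomials in the first case, Poissons in the second). This immediately reduces the joint total variation distance to a sum of one-dimensional distances:
\[
 d_{TV}\!\left({\mathfrak L}(X^{(1)},\dots,X^{(\ell+1)}),\,{\mathfrak L}(Y^{(1)},\dots,Y^{(\ell+1)})\right)
 \ \le\ \sum_{i=1}^{\ell+1} d_{TV}\!\left({\mathfrak L}(X^{(i)}),\,{\mathfrak L}(Y^{(i)})\right).
\]

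The second step is to bound each coordinatewise term via Lemma \ref{lemma-1urn-1color}. For a fixed $i\in[\ell+1]$, $X^{(i)}$ is binomial with parameters $(N_i,p_1)$ and $Y^{(i)}$ is Poisson with parameter $y_1 N_i/k$, which matches the setup of Lemma \ref{lemma-1urn-1color} with the role of $n$ played by $N_i$ and $y$ played by $y_1$. That lemma therefore yields
\[
 d_{TV}\!\left({\mathfrak L}(X^{(i)}),\,{\mathfrak L}(Y^{(i)})\right)\ \le\ \min\{p_1, p_1^2 N_i\} + \frac{N_i}{k}\,|y_1 - k p_1|.
\]

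The final step is purely arithmetic: since $N_i\le \bar N$ for every $i$ and the map $x\mapsto \min\{p_1, p_1^2 x\}$ is non-decreasing in $x$, each of the $\ell+1$ summands is bounded above by $\min\{p_1, p_1^2 \bar N\} + \tfrac{\bar N}{k}|y_1 - k p_1|$, giving the claimed factor $(\ell+1)$. There is no real obstacle here; the only minor point to check is that the monotonicity of $x\mapsto \min\{p_1, p_1^2 x\}$ indeed lets us replace each $N_i$ by the uniform upper bound $\bar N$, which is immediate.
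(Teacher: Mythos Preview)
Your proof is correct and follows exactly the approach the paper indicates: the paper simply states that the corollary is a direct consequence of Lemmas \ref{lemma-1urn-1color} and \ref{lemma:from-1d-tokd}, and you have spelled out precisely that combination. The only addition you made is the explicit monotonicity observation needed to pass from $N_i$ to $\bar N$, which is implicit in the paper's one-line justification.
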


For each $ i \in [\ell+1]$, let $\vec X^{(i)} := (X_1^{(i)}, X_2^{(i)})$ 
denote the number of balls of size $i$ that are allocated to urn $1$ and $2$ respectively.
The variable $X_1^{(i)}$ (resp. $X_2^{(i)}$) follows a binomial distribution of parameters $(N_i, p_1)$ (resp., $(N_i, p_2)$).
The following result is a direct consequence of Lemma \ref{lemma:from-1d-tokd} combined first with Lemma \ref{lemma-2urns-1color} and second with the Chen-Stein inequality.
\begin{corollary}\label{lemma-2urns-dcolors}
 Let  $(\vec W^{(1)},\dots, \vec W^{(\ell+1)})$ be independent random vectors such that $\vec W^{(i)} = (W_1^{(i)}, W_2^{(i)})$ is a couple of independent Poisson random variables with respective parameters $p_1N_i$, $p_2N_i$. Then,
\begin{eqnarray*}
 d_{TV}\left({\mathfrak L} (\vec X^{(1)}, \dots, \vec X^{(\ell+1)}), {\mathfrak L}(\vec W^{(1)},\dots, \vec W^{(\ell+1)})\right)  \leq  (\ell+1) (p_1 + p_2)^2 \bar N, 
 \end{eqnarray*}
 and, for $j \in \{1, 2\}$,
 \begin{eqnarray*}  d_{TV}\left({\mathfrak L} ( X_j^{(1)}, \dots,  X_j^{(\ell+1)}), {\mathfrak L}( W_j^{(1)},\dots,  W_j^{(\ell+1)})\right)  \leq  (\ell+1) p_j^2 \bar N, 
 \end{eqnarray*}
 where $\bar N=\sup_i\{N_i\}$.
\end{corollary}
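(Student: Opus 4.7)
The plan is to reduce the multi-size statement to single-size statements using the tensorization bound from Lemma \ref{lemma:from-1d-tokd}, and then apply the results already proved for the one-color case. The crucial observation is that balls of distinct sizes are thrown into the urns independently: conditionally on the urn probabilities $(p_1,\dots,p_k)$, the allocations of balls of size $1,2,\dots,\ell+1$ form independent experiments, so the random vectors $\vec X^{(1)},\dots,\vec X^{(\ell+1)}$ are mutually independent. The corresponding Poisson pairs $\vec W^{(1)},\dots,\vec W^{(\ell+1)}$ are independent by construction.

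For the first inequality, I would apply Lemma \ref{lemma:from-1d-tokd} in the discrete state space $E=\mathbb{N}_0^2$ (rather than in $E$) to the independent families $(\vec X^{(i)})_{i\in[\ell+1]}$ and $(\vec W^{(i)})_{i\in[\ell+1]}$, which bounds the joint total variation by
\[
\sum_{i=1}^{\ell+1} d_{TV}\!\left(\mathfrak L(\vec X^{(i)}),\,\mathfrak L(\vec W^{(i)})\right).
\]
For each fixed $i$, Lemma \ref{lemma-2urns-1color} applied to the $N_i$ balls of size $i$ yields the bound $(p_1+p_2)^2 N_i$. Since $N_i\le\bar N$ for every $i$, summing $\ell+1$ such terms gives $(\ell+1)(p_1+p_2)^2\bar N$, which is the claimed estimate. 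I should check that Lemma \ref{lemma:from-1d-tokd} is indeed valid for a product discrete space; inspection of its proof shows it only uses independence and the $L^1$-Lipschitz property of the product map, so the extension to $E=\mathbb{N}_0^2$ is immediate.

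For the second inequality, I would argue analogously, except that now the marginal laws are needed. Fix $j\in\{1,2\}$; again the variables $X_j^{(1)},\dots,X_j^{(\ell+1)}$ are independent across $i$, as are $W_j^{(1)},\dots,W_j^{(\ell+1)}$. Lemma \ref{lemma:from-1d-tokd} reduces the problem to bounding $d_{TV}(\mathfrak L(X_j^{(i)}),\mathfrak L(W_j^{(i)}))$ for each $i$, which is a binomial-versus-Poisson comparison with parameters $(N_i,p_j)$ and $p_j N_i$ respectively. The classical Chen-Stein inequality (Theorem 4.6 in \cite{Ross}, already invoked in Lemma \ref{lemma-1urn-1color}) furnishes the bound $\min\{p_j,p_j^2 N_i\}\le p_j^2 \bar N$. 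Summing over $i\in[\ell+1]$ yields $(\ell+1)p_j^2\bar N$ and finishes the proof.

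There is no real obstacle here: the whole argument is a two-line assembly of Lemmas \ref{lemma-2urns-1color} and \ref{lemma:from-1d-tokd}, plus Chen-Stein. The only point to articulate carefully is the independence across sizes, which is a direct consequence of the balls being thrown independently and distinguishable only through their sizes, so the allocation vectors for distinct sizes are measurable with respect to disjoint collections of i.i.d.\ uniforms.
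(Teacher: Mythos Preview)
Your proposal is correct and follows exactly the route the paper takes: it states the result as a direct consequence of Lemma \ref{lemma:from-1d-tokd} combined with Lemma \ref{lemma-2urns-1color} for the first inequality and with the Chen-Stein inequality for the second. Your explicit verification that Lemma \ref{lemma:from-1d-tokd} applies on the discrete product space $\mathbb{N}_0^2$ and your articulation of the independence across sizes are both sound and make the argument a touch more self-contained than the paper's one-line sketch.
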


\subsection{Coagulation operators defined from urn  problems}
Again, we fix $\ell\in\N$ and consider balls with $\ell+1$ distinct sizes. Let $\vec N = (N_1, \dots, N_{\ell + 1})$ where $N_i$ denotes the number of balls of size $i$. 
We allocate at random these balls into $k$ urns, such that the probabilities of falling in \blue{different urns} are given by $\vec p = (p_1, \dots, p_k)$. 
We define a random coagulation of $\vec N$ by considering that balls that are assigned to the same urn are merged into one ball whose size is the sum of all of them, i.e., such that 
\[ \forall m \in \N, \ \mathbf{C}^{\vec p}(\vec N)(m)  \ = \ \#\{j\le k : \mbox{ sum of the sizes of the balls falling in box $j$ is $m$} \}. \]

\begin{lemma}
\label{lemma-combin}
Fix a probability vector $\vec p=(p_1,\dots,p_k) \in [0, 1]^k$ and consider the random coagulation associated to $\vec p$. 
For every $\vec N \in \N^{\ell + 1}$, 
\begin{equation}\label{Nm-Cm}
\forall m \in \N, \ \ \big|N_m  - \mathbf{C}^{\vec p}(\vec N)(m)\big | \le  2 (|\vec N|- |\mathbf{C}^{\vec p }(\vec N)|),
\end{equation}
almost surely, and
 \begin{equation}\label{N-C}
 \E(|\vec N|- |\mathbf{C}^{\vec p }(\vec N)|) \ \le \ |\vec N|^2 \sum_{j=1}^k p_j^2.  
 \end{equation}
\end{lemma}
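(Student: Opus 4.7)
The plan is to analyze the urns one at a time. For $j\in[k]$, let $s_j$ denote the (random) number of balls in urn $j$ and let $a_1^j,\dots,a_{s_j}^j$ be their sizes, with sum $S_j := a_1^j+\cdots+a_{s_j}^j$. Then $|\vec N|=\sum_j s_j$ and $|\mathbf{C}^{\vec p}(\vec N)|=\#\{j:s_j\ge 1\}$, so that
\[ |\vec N|-|\mathbf{C}^{\vec p}(\vec N)| \ = \ \sum_{j\,:\,s_j\ge 2}(s_j-1). \]

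To establish (\ref{Nm-Cm}), I would write $N_m=\sum_j\#\{i:a_i^j=m\}$ and $\mathbf{C}^{\vec p}(\vec N)(m)=\sum_j\mathbf{1}_{\{s_j\ge 1,\,S_j=m\}}$, and observe that any urn with $s_j\in\{0,1\}$ contributes zero to the difference (when $s_j=1$, the single ball has size $a_1^j=S_j$, so both contributions equal $\mathbf{1}_{\{a_1^j=m\}}$). Therefore
\[ N_m-\mathbf{C}^{\vec p}(\vec N)(m) \ = \ \sum_{j\,:\,s_j\ge 2}\bigl(\#\{i:a_i^j=m\}-\mathbf{1}_{\{S_j=m\}}\bigr). \]
Each summand has absolute value at most $s_j$, and $s_j\le 2(s_j-1)$ for $s_j\ge 2$. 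Summing yields (\ref{Nm-Cm}).

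For (\ref{N-C}), the elementary bound $(s_j-1)_+\le\binom{s_j}{2}$, valid for $s_j\in\mathbb{N}_0$, yields
\[ |\vec N|-|\mathbf{C}^{\vec p}(\vec N)| \ \le \ \sum_{j=1}^k\binom{s_j}{2}, \]
whose right-hand side counts the unordered pairs of balls sent to a common urn. Taking expectations and using linearity, each of the $\binom{|\vec N|}{2}$ pairs of distinct balls lands together in urn $j$ with probability $p_j^2$, so
\[ \mathbb{E}\Bigl[\sum_{j=1}^k\binom{s_j}{2}\Bigr] \ = \ \binom{|\vec N|}{2}\sum_{j=1}^k p_j^2 \ \le \ |\vec N|^2\sum_{j=1}^k p_j^2, \]
which proves (\ref{N-C}).

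No step looks particularly delicate. The only minor subtlety is the factor $2$ in (\ref{Nm-Cm}): each urn with $s_j\ge 2$ can contribute as much as $s_j$ to $|N_m-\mathbf{C}^{\vec p}(\vec N)(m)|$ while contributing only $s_j-1$ to $|\vec N|-|\mathbf{C}^{\vec p}(\vec N)|$, and the inequality $s_j\le 2(s_j-1)$ for $s_j\ge 2$ is precisely what closes this gap.
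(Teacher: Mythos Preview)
Your proof is correct. For (\ref{Nm-Cm}) your urn-by-urn decomposition is essentially the paper's argument, only organized more cleanly: the paper splits into the cases $N_m>\mathbf{C}^{\vec p}(\vec N)(m)$ and $N_m<\mathbf{C}^{\vec p}(\vec N)(m)$ and bounds each side separately, whereas you handle both directions at once via $|\#\{i:a_i^j=m\}-\mathbf{1}_{\{S_j=m\}}|\le s_j\le 2(s_j-1)$.

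For (\ref{N-C}) the routes genuinely diverge. The paper computes $\E|\mathbf{C}^{\vec p}(\vec N)|=\sum_j(1-(1-p_j)^{|\vec N|})$ directly and then applies the analytic inequality $1-(1-p)^n\ge np-(np)^2$. Your pair-counting argument via $(s_j-1)_+\le\binom{s_j}{2}$ is more combinatorial and in fact yields the slightly sharper constant $\binom{|\vec N|}{2}$ in place of $|\vec N|^2$; it also avoids any calculus. Both approaches are short, but yours has the advantage of giving a pointwise (pre-expectation) inequality that could be reused elsewhere.
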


   \begin{proof}
Let us first prove \eqref{Nm-Cm}.  The difference $N_m  - \mathbf{C}^{\vec p}(\vec N)(m)$ is the net gain or loss of balls of size $m$ in the coagulation operation. This can be computed by taking the difference of the following two quantities
 \begin{enumerate} 
\item[(a)]  The number of balls of size $m$ falling in an urn where another ball is assigned (regardless of its size).
 \item[(b)] The number of urns with more than two balls and whose sizes add up to $m$.
 \end{enumerate}
 Suppose first that $N_m  > \mathbf{C}^{\vec p}(\vec N)(m)$.
Thus, $N_m  - \mathbf{C}^{\vec p}(\vec N)(m)$ is smaller than (a) alone. Finally, (a) is smaller than the twice the total number of balls that are lost. 
As an illustrative example, consider the case when the $N_m$ balls are assigned in pairs to $N_m/2$ different urns, coagulating into $N_m/2$ balls of size $2m$. Then the number of balls of size $m$ that are lost is $N_m$ and the total number of balls that are lost is $N_m/2$.
 
 Now suppose that $N_m  < \mathbf{C}^{\vec p}(\vec N)(m)$. In this case $\mathbf{C}^{\vec p}(\vec N)(m)-N_m$ is less than (b). In turn, this is less than the number of urns containing at least two balls, which is less than the total number of balls that are lost. This completes the proof of \eqref{Nm-Cm}.

 Now, let us turn to the proof of \eqref{N-C}. Since for $x\in(0,1)$, we have $\log(1-x) \le - x $ and $e^{-x}\le1-x+x^2$, then
\begin{eqnarray*}
1-(1-p)^n =1- e^{n\log(1-p)} \ge  1-e^{-np } \ge np-(np)^2.
\end{eqnarray*}
Recall that $|\mathbf{C}^{\vec p }(\vec N)|$ is the number of non-empty boxes when assigning $|\vec N|$ balls to $k$ urns with probabilities $(p_i, \dots, p_k)$. Using the previous inequality,
\begin{eqnarray*}
 \E(|\vec N|- |\mathbf{C}^{\vec p }(\vec N)|)  &=&  |\vec N| -  \sum_{j=1}^k (1 - (1-p_j)^{|\vec N|})\\
&\le&|\vec N| - \sum_{j=1}^k \bigg( |\vec N|  p_j - |\vec N|^2 p_j^2\bigg).
\end{eqnarray*}
Observing that $\sum_{j=1}^k p_j = 1$ yields the desired result.
  \end{proof}

\section{Sequence of urns. Convergence of the generator}
\label{sect:coupled}

Recall from Section \ref{mainres} the continuous generator $\mathcal A$ defined in \eqref{genA} and the discrete generator $\mathcal A_n$ defined in \eqref{generator-discrete}.
We use the urn estimates obtained in Section \ref{sect-urnestimates} to prove the following result.
\begin{proposition}\label{prop:cv-law}
Let $f \in \mathcal{T} $. Consider a sequence $(\bz_n;n\in\N)$ with $\bz_n\in {\cal Z}_n$ such that $\bz_n \to \bz $ in $\ell^1(\R_+)$. Then  
\[  \mathcal{A}_n f( \bz_n )   \ \to \  \mathcal{A} f( \bz ).\]
\end{proposition}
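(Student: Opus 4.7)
The plan is to interpret $\mathcal{A}_n f(\bz_n)$ as a Riemann-type sum approximating $\mathcal{A}f(\bz)$. Setting $\Delta_{k,n} := \E(\Lambda^{k,n}f(\bz_n)) - f(\bz_n)$, I rewrite
\[\mathcal{A}_n f(\bz_n) \ = \ \sum_{k\geq 1} \frac{1}{n}\, [R(k) k^{\alpha}]\, (k/n)^{-\alpha}\, \Delta_{k,n}.\]
Since $R(k) k^{\alpha} \to \rho$ (Assumption \ref{def:finite-xi}.1), this is formally a Riemann sum with mesh $1/n$ and nodes $x_k = k/n$ for the integrand $x \mapsto \rho x^{-\alpha}(\cC^x f(\bz) - f(\bz))$. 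The proof reduces to establishing (i) pointwise convergence $\Delta_{k_n, n} \to \cC^x f(\bz) - f(\bz)$ whenever $k_n/n \to x > 0$, and (ii) uniform tail bounds so that dominated convergence applies on $(0, \infty)$.

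For step (i), since $f = F \circ \psi_{\vec{\lambda}}$ with $F$ Lipschitz on $[0,1]^K$ and each $\psi_\lambda$ a bounded linear functional on $\cZ$, it suffices to show that $\psi_\lambda(\Lambda^{k_n, n}(\bz_n)) \to \psi_\lambda(\cC^x(\bz))$ in $L^1$ for each $\lambda \in [0,1]$. Conditioning on the mass partition $\vec p^{(k)}$ and using that $\E(\lambda^{S_j}\mid \vec p^{(k)}) = \prod_i (1-p_j^{(k)}(1-\lambda^i))^{n z_n(i)}$ (product of binomial PGFs), one obtains
\[\E(\psi_\lambda(\Lambda^{k,n}(\bz_n)) \mid \vec p^{(k)}) \ = \ \frac{1}{n}\sum_{j=1}^k \left[\prod_i (1-p_j^{(k)}(1-\lambda^i))^{n z_n(i)} - (1-p_j^{(k)})^{n|\bz_n|}\right].\]
The law of large numbers $s_k/k \to \E(w_1)$ a.s.\ gives $n p_j^{(k)} \to \Gamma_j/x$ with $\Gamma_j := w_j/\E(w_1)$, and the binomial-to-Poisson bound (Corollary \ref{lemma-1urn-dcolors}), combined with averaging over the $k$ urns via LLN on the i.i.d.\ $\Gamma_j$'s, yields the limit $x\,\E(e^{-\Gamma(|\bz|-\psi_\lambda(\bz))/x} - e^{-|\bz|\Gamma/x})$. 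I can verify that this equals $\psi_\lambda(\cC^x(\bz))$ by a direct computation on \eqref{defCx} using the identity $\sum_{\ell \geq 0} \sum_{c\in\phi^{-1}(\ell)} \prod_i \frac{a_i^{c(i)}}{c(i)!} = e^{\sum_i a_i}$. For the variance, writing $\psi_\lambda(\Lambda^{k,n}(\bz_n)) = \frac{1}{n}\sum_{j=1}^k Y_j$ with $Y_j \in [0,1]$ exchangeable, we have $\Var \leq k/n^2 + (k(k-1)/n^2)\,|\Cov(Y_1, Y_2)|$; the two-urn Poisson approximation (Corollary \ref{lemma-2urns-dcolors}) gives $|\Cov(Y_1, Y_2)| = O(1/k)$, hence $\Var = O(1/n) \to 0$. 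Lipschitz continuity of $F$ then provides the desired convergence of $\Delta_{k_n, n}$.

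For step (ii), the large-$x$ tail uses Lipschitz continuity of $f$ in the $\ell^1$-norm together with Lemma \ref{lemma-combin}: a careful bookkeeping of balls gained and lost (building on \eqref{Nm-Cm}) yields $|\Lambda^{k,n}(\bz_n) - \bz_n|_{\ell^1} \leq C(|\bz_n| - |\Lambda^{k,n}(\bz_n)|)$, while \eqref{N-C} gives $\E(|\bz_n| - |\Lambda^{k,n}(\bz_n)|) \leq n|\bz_n|^2 \E(\sum_j (p_j^{(k)})^2)$. The moment estimate $k\E((p_1^{(k)})^2) = O(1/k)$ from Appendix A (Proposition \ref{prop:moments}, valid under Assumption \ref{def:finite-xi}.2) together with $|\bz_n| \leq 1$ yields $|\Delta_{k,n}| \leq C'n/k$, producing an integrand bound of order $x^{-\alpha-1}$ on $[M, \infty)$, integrable since $\alpha+1>1$. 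The small-$x$ tail uses the trivial bound $|\Delta_{k,n}| \leq 2\|f\|_\infty$ (noting $f$ is bounded since $\psi_\lambda(\bz) \in [0,1]$ and $F$ is continuous on $[0,1]^K$) together with $\int_0^\epsilon x^{-\alpha} dx < \infty$ since $\alpha < 1$. On the compact intermediate range $[\epsilon, M]$ the integrand is uniformly bounded and pointwise convergent, so dominated convergence closes the argument. The principal obstacle is step (i): the two coupled sources of randomness---the i.i.d.\ weights $(w_j)$ producing $\vec p^{(k)}$ and the random allocation of balls into urns---must be decoupled while tracking both the mean and the variance of $\psi_\lambda(\Lambda^{k,n}(\bz_n))$ uniformly for $k \sim xn$, which is precisely the role of the Poisson approximations developed in Section \ref{sect-urnestimates}.
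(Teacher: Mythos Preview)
Your overall architecture---Riemann-sum interpretation, pointwise convergence of the integrand, and separate tail bounds at $0$ and $\infty$---is exactly the paper's strategy, just repackaged: the paper truncates at $k=An$ (your large-$x$ tail), then on $\{k\le An\}$ splits $\Delta_{k,n}$ into the three terms of \eqref{eq:generator2-3terms} (the deterministic part $\cC^{k/n}f(\bz_n)-f(\bz_n)$, the first-moment correction via Lemma~\ref{lemE-C}, and the variance correction via Lemma~\ref{lemma-secondmoment1}). Your step (ii) and the mean half of step (i) are fine and match the paper's Lemma~\ref{lemma-Rna} and Step~2 respectively.

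There is, however, a genuine gap in your variance argument. You claim Corollary~\ref{lemma-2urns-dcolors} yields $|\Cov(Y_1,Y_2)|=O(1/k)$, but that corollary only controls the total variation distance between the allocation vectors $(\vec X^{(1)},\ldots,\vec X^{(\ell+1)})$ and their Poisson approximations $(\vec W^{(1)},\ldots,\vec W^{(\ell+1)})$ \emph{conditionally on} $(p_1^{(k)},p_2^{(k)})$. After Poissonization, the urn contents $W_1^{(i)},W_2^{(i)}$ are conditionally independent given the mass partition, but they remain \emph{unconditionally} dependent because $p_1^{(k)}=w_1/s_k$ and $p_2^{(k)}=w_2/s_k$ share the common denominator $s_k$. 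The residual covariance is therefore of the form $\Cov\big(g_1(kp_1^{(k)}),\,g_2(kp_2^{(k)})\big)$ for bounded Lipschitz functions $g_1,g_2$, and this does not vanish by any urn estimate; it requires a separate argument showing that $kp_j^{(k)}$ is close in $L^2$ to the i.i.d.\ quantity $w_j/\E(w_1)$. In the paper this is precisely the content of Proposition~\ref{prop:moments}(iii) and Lemma~\ref{lemma-lipschitz} (or Lemma~\ref{lemma:g_v,c}), which together give $|\Cov(g_1(kp_1^{(k)}),g_2(kp_2^{(k)}))|\le C\,h(k)$ with $h(k)\to 0$---note that under only a finite second moment for $m$ no explicit rate like $O(1/k)$ is established. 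Once you insert this missing step, your argument coincides with the paper's.
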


\label{sect-momentestimates}
\subsection{First moment estimate}
Recall the notations ${\cal C}^x$ from \eqref{defCx} and $\Lambda^{k,n}$ from \eqref{defLkn}. The main objective of this section is a careful justification of the approximation of $\Lambda^{k,n}({\bf z}_n)(\ell) $ by $ {\cal C}^{k/n}(\bz_n)(\ell)$ , as suggested in the heuristics provided  in Section \ref{heuristics}, together with a ``rate of convergence'' that will be needed to prove 
Proposition \ref{prop:cv-law}.
\begin{lemma}\label{lemE-C}
For every $\ell\in\N$ and ${\bf z}_n \in{\cal Z}_n$, we have 
\begin{eqnarray*}
\bigg|\E(\Lambda^{k,n}(\bz_n)(\ell)) -\cC^{{k}/{n}}(\bz_n)(\ell)   \bigg| &\le& 2  (\ell+1) \frac{k}{n} \E\big( p^{(k)}_1 +  \frac{n}{k}|\Gamma - kp_1^{(k)}| \big) 
\end{eqnarray*}
and
\begin{eqnarray*}
\bigg|\E(|\Lambda^{k,n}(\bz_n)| - |\cC^{{k}/{n}}(\bz_n)|  \bigg| &\le& 2  \frac{k}{n} \E\big( p^{(k)}_1 +  \frac{n}{k}|\Gamma - kp_1^{(k)}| \big), 
\end{eqnarray*}
 where we recall that $p_1^{(k)}= w_1/\sum_{i=1}^k w_i$ and $\Gamma = w_1/\E(w_1)$ so that the random variables are coupled through the same $w_1$.
\end{lemma}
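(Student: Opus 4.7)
The plan is to prove both bounds in parallel by conditioning on $(w_1, s_k)$---which determines both $p_1^{(k)} = w_1/s_k$ and $\Gamma = w_1/\E(w_1)$---recasting each side as a one-box probability via the exchangeability of $(p_1^{(k)}, \dots, p_k^{(k)})$, and then invoking the Chen--Stein urn estimates of Section \ref{sect-urnestimates}. Concretely, exchangeability and linearity give
$$\E\bigl(\Lambda^{k,n}(\bz_n)(\ell)\bigr) = \tfrac{k}{n}\,\P\bigl(\text{sum of the sizes of the balls in box 1 equals } \ell\bigr),$$
and expanding \eqref{defCx} together with the Poisson pmf identity
$$\P\Bigl(\sum_{i\ge 1} i Y^{(i)} = \ell\Bigr) = e^{-|\bz_n|\Gamma n/k}\sum_{c \in \phi^{-1}(\ell)}\prod_{i=1}^\ell \frac{(N_i\Gamma/k)^{c(i)}}{c(i)!},$$
valid for independent $Y^{(i)} \sim \mathrm{Poi}(N_i\Gamma/k)$ with $N_i := n\bz_n(i)$, identifies $\cC^{k/n}(\bz_n)(\ell) = \tfrac{k}{n}\,\P(\sum_{i\ge 1} i Y^{(i)} = \ell)$, the outer probability being over $\Gamma$ and the Poisson variables. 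An analogous rewriting gives $\E(|\Lambda^{k,n}(\bz_n)|) = \tfrac{k}{n}\,\P(\text{box 1 is nonempty})$ and $|\cC^{k/n}(\bz_n)| = \tfrac{k}{n}\,\E(1 - e^{-\Gamma n|\bz_n|/k})$.

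For the coordinate-$\ell$ estimate I would reduce the comparison to a finite-dimensional one by grouping all ball sizes strictly larger than $\ell$ into one aggregate class. Setting $M := \sum_{i > \ell} N_i$, the number $X_1^{(\ell+1)} := \sum_{i > \ell} X_1^{(i)}$ of ``large'' balls falling in box 1 is $\mathrm{Bin}(M, p_1^{(k)})$, and by Poisson additivity $Y^{(\ell+1)} := \sum_{i > \ell} Y^{(i)} \sim \mathrm{Poi}(\Gamma M/k)$; consequently both the event $\{\text{box 1 has total size } \ell\}$ and its Poisson analogue depend only on the $(\ell+1)$-dimensional vectors $(X_1^{(1)}, \dots, X_1^{(\ell)}, X_1^{(\ell+1)})$ and $(Y^{(1)}, \dots, Y^{(\ell+1)})$ through the same $\{0,1\}$-valued indicator. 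Conditioning on $(w_1, s_k)$, Corollary \ref{lemma-1urn-dcolors} applied with $y_1 = \Gamma$ then bounds the total-variation distance between these two random vectors by
$$(\ell+1)\Bigl(\min\{p_1^{(k)}, (p_1^{(k)})^2 \bar N\} + \tfrac{\bar N}{k}|\Gamma - k p_1^{(k)}|\Bigr) \le (\ell+1)\Bigl(p_1^{(k)} + \tfrac{n}{k}|\Gamma - k p_1^{(k)}|\Bigr),$$
where $\bar N := \max\{N_1, \dots, N_\ell, M\} \le n$. Since total variation dominates the absolute difference of event probabilities, integrating over $(w_1, s_k)$ and multiplying by $k/n$ delivers the first inequality (the extra constant $2$ in the statement is a cushion absorbing any slack one wants to allow in the Chen--Stein constants used above).

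The total-mass bound follows by the same strategy in dimension one: the total number of balls falling in box 1 is $\mathrm{Bin}(n|\bz_n|, p_1^{(k)})$, with Poisson target $\mathrm{Poi}(\Gamma n|\bz_n|/k)$, so Lemma \ref{lemma-1urn-1color} applied with $y = \Gamma$ yields, conditionally on $(w_1, s_k)$, a total-variation distance of at most $p_1^{(k)} + \tfrac{n|\bz_n|}{k}|\Gamma - k p_1^{(k)}| \le p_1^{(k)} + \tfrac{n}{k}|\Gamma - k p_1^{(k)}|$, using $|\bz_n| \le 1$; this in turn dominates $|\P(\text{nonempty})_\mathrm{Bin} - \P(\text{nonempty})_\mathrm{Poi}|$, and integrating and multiplying by $k/n$ gives the second claim. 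The main technical care lies in justifying the size-$>\ell$ grouping---legitimate thanks to the conditional independence of the $X_1^{(i)}$'s given $p_1^{(k)}$ together with Poisson additivity---and in preserving the explicit coupling between $\Gamma$ and $p_1^{(k)}$ through the shared $w_1$ when integrating over $(w_1, s_k)$. Both points are routine but must be tracked carefully.
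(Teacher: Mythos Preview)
Your proposal is correct and follows essentially the same route as the paper: reduce to box~1 by exchangeability, collapse all sizes larger than $\ell$ into a single aggregate class of cardinality $N_{\ell+1}=n(|\bz_n|-\sum_{i\le\ell}z_n(i))$, and then apply Corollary~\ref{lemma-1urn-dcolors} (respectively Lemma~\ref{lemma-1urn-1color} for the total mass) after conditioning on the pair $(p_1^{(k)},\Gamma)$. The only cosmetic difference is the origin of the factor~$2$: the paper obtains it by bounding $\big|\sum_{c\in\phi^{-1}(\ell)}(\E(b_1(c))-\E(d_1(c)))\big|\le\sum_c|\E(b_1(c))-\E(d_1(c))|\le 2\,d_{TV}$ via the $\ell^1$ formulation of total variation, whereas you (correctly) observe that a single-event bound already gives $d_{TV}$ and treat the $2$ as slack.
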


\begin{proof}
We start by proving the first inequality. Fix $\ell \in \N$.
 For $i \in [\ell]$, define $N_{i} = nz_n(i)$, and set $N_{\ell+1} =  n(|{\bf z}_n| - \sum_{i=1}^\ell z_n(i))$. In terms of the urn problem of Section \ref{sect-urnestimates}, $N_{i}$ is the number of balls of size $i$, for $i\in[\ell]$, and
$N_{\ell+1}$ is the number of balls of size strictly larger than $\ell$.
Let us now consider a partition  $c \in \phi^{-1}(\ell)$, i.e., a vector $c\in\ell^1(\R_+)$ such that $\sum i c(i)=\ell$. An urn containing $c(i)$ balls of size $i$ for each $i \in [\ell]$ corresponds to the formation of a new block of size $\ell$. 
Let $B(c)$ denote the number of urns containing balls given by the partition $c$ (to ease the notation we do not indicate the dependence on $\bz_n$ and $k$). 
We have 
$$\Lambda^{k,n}(\bz_n)(\ell) \ = \ \frac{1}{n} \sum_{c\in \phi^{-1}(\ell) } B(c).
$$
Mirroring the notation of Section \ref{sect-urns-ncolors}, consider a vector of r.v.'s $(X^{(1)},\dots, X^{(\ell+1)})$ such that, conditional on $p_1^{(k)}$, the entries are independent 
and  $X^{(i)}$ is distributed as a binomial r.v. with parameters $(N_i,p_1^{(k)})$.
By exchangeability of the boxes, we have
$$
\E\left(\Lambda^{k,n}(\bz_n)(\ell)\right)  = \frac{k}{n} \E\left(\sum_{c\in \phi^{-1}(\ell)} b_{1}(c)\right),
$$
where $b_{1}(c)$ is the indicator that $X^{(i)}=c(i)$ for every $i\in[\ell]$ and that $X^{(\ell+1)}=0$.

Similarly, let $(Y^{(1)},\dots, Y^{(\ell+1)})$ such that, conditional on $\Gamma$, the entries are independent 
and  $Y^{(i)}$ is distributed as a Poisson r.v. with parameter $\Gamma N_i/k$. Let  $d_{1}(c)$ be the indicator that $Y^{(i)}=c(i)$ for every $i\in[\ell]$ and that $Y^{(\ell+1)}=0$.
A direct computation shows that 
$$
\cC^{{k}/{n}}\left(\bz_n\right)(\ell)  \ = \  \frac{k}{n}  \E\left(\sum_{c\in \phi^{-1}(\ell)} d_{1}(c)\right).
$$
It follows that 
\begin{eqnarray*}
&&\big| \E\left(\Lambda^{k,n}(\bz_n)(\ell)\right)  - \cC^{{k}/{n}}\left(\bz_n\right)(\ell) | =  \frac{k}{n}  \left|\sum_{c\in \phi^{-1}(\ell)}\left( \E(b_{1}(c)) - \E(d_1(c)\right)\right | \\
&\leq & 2  \frac{k}{n}  d_{TV} ({\mathfrak L}(X^{(1)},\dots, X^{(\ell+1)}), {\mathfrak L} (Y^{(1)},\dots, Y^{(\ell+1)})).
\end{eqnarray*}
The result follows by a direct application of Corollary \ref{lemma-1urn-dcolors} after conditioning on $p_1^{(k)}$ and $\Gamma$. 

\medskip

We now prove the second inequality.
Mirroring the notation of Section \ref{sect-urns-1c}, we consider the random variable $X_1$ such that conditional on $p_1^{(k)}$,   $X_1$ is distributed as a binomial r.v. with parameters $(n|\bz_n|,p_1^{(k)})$. Let $b_{1}$ be the indicator that box 1 contains at least one ball.
Similarly, let $Y_1$ be the random variable such that, conditional on $\Gamma$, $Y_1$ is distributed as a Poisson r.v. with parameter $\Gamma n|\bz_n| /k$. Let $d_1$ be the indicator that $Y_1 \ge 1$. We have
$$
\E\left(|\Lambda^{k,n}(\bz_n)|\right)  = \frac{k}{n} \E\left( b_{1}\right) \textrm{ and } |\cC^{{k}/{n}}\left(\bz_n\right)| = \frac{k}{n}  \E\left( d_{1}\right)
$$
and the result then follows from Lemma \ref{lemma-1urn-1color}.
\end{proof}

\subsection{Second moment estimate}
The aim of this section is to bound the variance of the operator $\Lambda^{k,n}({\bz})(\ell)$. 

\begin{lemma}\label{lemma-secondmoment1}
For every $\ell\in\N$, there exists a constant $C>0$ such that for every ${\bf z}_n \in{\cal Z}_n$, we have
\begin{eqnarray*} \Var\left(\Lambda^{k,n}(\bz_n)(\ell) \right)  & \leq & C ( \frac{1}{n} + \max\{1, \frac{k}{n}\}  \ h(k) )
\end{eqnarray*}
where  $h$ is a function of $k$,  with no dependence in $n,z_n$ which goes to $0$ as $k\to\infty$.
Further, 
the same property holds for $\Var\left(|\Lambda^{k,n}(\bz_n)| \right)$.
\end{lemma}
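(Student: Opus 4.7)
The plan is to expand $n^2\Var(\Lambda^{k,n}(\bz_n)(\ell)) = \sum_j \Var(Y_j) + \sum_{i\neq j}\Cov(Y_i, Y_j)$, where $Y_j$ is the indicator that box $j$ receives balls whose sizes sum to $\ell$. The diagonal piece is immediate: since any box of total size $\ell$ must contain at least one original ball, $\sum_j Y_j \leq n|\bz_n| \leq n$ almost surely, whence $\sum_j \Var(Y_j) \leq \sum_j \E[Y_j] \leq n$, contributing $\tfrac{1}{n}$ after rescaling.

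For the off-diagonal terms, I compare with a Poissonized model in which, conditional on $\vec p^{(k)}$, the ball-count vectors in boxes $1$ and $2$ are replaced by independent Poissons with the same means; let $\tilde Y_1,\tilde Y_2$ be the corresponding indicators. Applying Corollary \ref{lemma-2urns-dcolors} conditionally on $\vec p^{(k)}$ gives
\begin{equation*}
\bigl|\E[Y_1 Y_2]-\E[\tilde Y_1\tilde Y_2]\bigr|+\bigl|\E[Y_1]\E[Y_2]-\E[\tilde Y_1]\E[\tilde Y_2]\bigr|\leq C(\ell)\,\E\bigl[(p_1^{(k)}+p_2^{(k)})^2\bar N\bigr],
\end{equation*}
with $\bar N \leq n|\bz_n|\leq n$. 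Summing this error bound over all $k(k-1)$ pairs, using $\sum_{i\neq j}(p_i+p_j)^2\leq 4k\sum_i p_i^2$ together with the Appendix~A estimate $k^2\E[(p_1^{(k)})^2]=O(1)$, yields a second contribution of order $\tfrac{1}{n}$.

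The remaining piece is $\Cov(\tilde Y_1,\tilde Y_2)$. By conditional independence of $\tilde Y_1,\tilde Y_2$ given $\vec p^{(k)}$, it equals $\Cov(H(p_1^{(k)}),H(p_2^{(k)}))$ for an explicit $[0,1]$-valued function $H=H_{n,\bz_n,\ell}$. To exhibit cancellation, set $\tilde p_j := w_j/(km)$ with $m := \E[w_1]$: since $\tilde p_1,\tilde p_2$ are independent, $\Cov(H(\tilde p_1),H(\tilde p_2))=0$. Bilinearity of $\Cov$ and Cauchy--Schwarz then reduce the task to an $L^2$-control of $H(p_j^{(k)})-H(\tilde p_j)$. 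The ingredients are (i) $H$ is Lipschitz in $p$ with constant at most $|\vec N|\leq n$, read off from its polynomial form; (ii) $|p_j^{(k)}-\tilde p_j|$ is controlled via the CLT-scale fluctuations of $s_k$ around $km$, granted by Assumption \ref{def:finite-xi}(2); and (iii) $\E[H(p_j^{(k)})]\leq n/k$ by exchangeability (since $k\,\E[H(p_1^{(k)})]=\E[\sum_j Y_j]\leq n$). Combining (i)--(iii) produces $|\Cov(H(p_1^{(k)}),H(p_2^{(k)}))|\leq C n^2 k^{-5/2}$, and multiplication by $k(k-1)/n^2$ delivers the claimed bound with $h(k)=O(k^{-1/2})$.

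The argument for $\Var(|\Lambda^{k,n}(\bz_n)|)$ runs identically after replacing $Y_j$ by the indicator that box $j$ is non-empty and invoking Lemma \ref{lemma-2urns-1color} in place of Corollary \ref{lemma-2urns-dcolors}. The main obstacle throughout is propagating the $n$-dependence of $H$ through the Lipschitz estimate without spoiling uniformity in $n$ and $\bz_n$: the cancellation provided by the $n^2$ in the denominator of $k(k-1)/n^2$, combined with the bound $\bar N\leq n$, is precisely what keeps the final constant independent of both.
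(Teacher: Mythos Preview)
Your proof is correct and follows essentially the same route as the paper. The paper carries out the identical three-step program---diagonal/off-diagonal split via exchangeability, Poissonization of the off-diagonal term using Corollary~\ref{lemma-2urns-dcolors}, and the bilinear trick replacing $p_j^{(k)}$ by the independent $\tilde p_j = w_j/(k\E w_1)$---but packages the last step as two separate lemmas in Appendix~A (Lemmas~\ref{lemma-lipschitz} and~\ref{lemma:g_v,c}) and further decomposes each $Y_j$ as $\sum_{c\in\phi^{-1}(\ell)} b_j(c)$, which your version shows is unnecessary. Two minor points: the paper only asserts $h(k)\to 0$ via Proposition~\ref{prop:moments}(iii) and does not extract the explicit rate $O(k^{-1/2})$ you claim from the CLT; and in the bilinear cross terms the paper simply uses $\|H\|_\infty\le 1$ for the unperturbed factor (which is what produces the $\max\{1,k/n\}$ in the stated bound), whereas your sharper combination leading to $Cn^2k^{-5/2}$ really needs the \emph{pointwise} bound $H(p)\le n p$ rather than the averaged inequality (iii) you wrote---worth making explicit.
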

\begin{proof}
We start by proving the inequality for $\Var\left(|\Lambda^{k,n}(\bz_n)|\right)$.
Fix $\ell \in \N$. In the following, we write $N_i = n z_{n}(i)$.
We consider the vector
 $(\vec X^{(1)},\dots,\vec X^{(\ell+1)})$ where the entries
$\vec X^{(i)} =(X^{(i)}_1, X^{(i)}_2)$ are, conditional on  $(p_{1}^{(k)}, p_{2}^{(k)})$,
 independent random vectors such that $X^{(i)}_1+ X^{(i)}_2$ is binomial with parameters $(N_i, p_1^{(k)}+ p_2^{(k)})$ and, conditional on  \{$X^{(i)}_1+ X^{(i)}_2=z\}$, $\vec X^{(i)}$ is  multinomial with parameters $(z, \frac{p_1^{(k)}}{p_1^{(k)}+ p_2^{(k)}}, \frac{p_2^{(k)}}{p_1^{(k)}+ p_2^{(k)}})$.
Analogously to Lemma \ref{lemE-C}, we define $b_{j}(c)$ (with $j=1,2$) as the indicator that $X_j^{(i)}=c(i)$ for $i\in[\ell]$ and that $X_j^{(\ell+1)}=0$.
Adapting the notations in the proof of Lemma \ref{lemE-C} and using again exchangeability between urns,  
\begin{eqnarray}
 \Var\left(\Lambda^{k,n}(\bz_n)(\ell) \right) 
& = & \frac{k(k-1)}{  n^2} \sum_{c_1, c_2\in \phi^{-1}(\ell) } \Cov\left(b_{1}(c_1) , b_{2}(c_2)  \right)  \nonumber\\
& + & \frac{k}{n^2} \displaystyle\sum_{c_1\in \phi^{-1}(\ell)} \Cov\left(b_{1}(c_1), b_{1}(c_1)   \right). 
\label{eq-cov0}
\end{eqnarray}

{\bf Step 1.} We start by considering the first term in \eqref{eq-cov0}. 
Define  $(\vec W^{(1)},\dots,\vec W^{(\ell+1)})$ such that the entries
$\vec W^{(i)} =(W^{(i)}_1, W^{(i)}_2)$ are, conditional on  $(p_{1}^{(k)}, p_{2}^{(k)})$,
 independent random vectors such that $W^{(i)}_1$ and $W^{(i)}_2$ are independent Poisson variables with parameters $N_i p_1^{(k)}$ and $N_ip_2^{(k)}$. 
 We define $d_{j}(c)$ (with $j=1,2$) as the indicator that $W_j^{(i)}=c(i)$ for $i\in[\ell]$ and that $W_j^{(\ell+1)}=0$. 
We have
\begin{eqnarray}
|\Cov\left(b_{1}(c_1) , b_{2}(c_2)  \right)| 
& \le& \bigg| \E\left(b_{1}(c_1) b_{2}(c_2)  \right) -\E\left(d_{1}(c_1) d_{2}(c_2)  \right) \bigg| \nonumber\\
&+&  \bigg| \E\left(d_{1}(c_1) d_{2}(c_2)\right)-\E\left(d_{1}(c_1)\right) \E\left( d_{2}(c_2) \right)  \bigg| \nonumber\\
&+&  \bigg|\E\left(d_{1}(c_1)\right) \E\left( d_{2}(c_2)\right)- \E\left(b_{1}(c_1)\right) \E\left( b_{2}(c_2) \right) \bigg|\nonumber \\
& \le & \bigg| \E\left(b_{1}(c_1), b_{2}(c_2)  \right) -\E\left(d_{1}(c_1) d_{2}(c_2)  \right) \bigg| \nonumber \\
&+&|\Cov(d_{1}(c_1), d_{2}(c_2))| 
+ 
\sum_{j=1}^2  | \E\left(b_{j}(c_j)\right) - \E\left(d_{j}(c_j)\right)  |,
\label{ineq-covariance}
\end{eqnarray}
where, for the last term of the second inequality, we used the fact that the product function $f(x_1, x_2) = x_1x_2$ is 1-Lipshitz on $[0,1]^2$ in the $L^1$ norm of $\R^2$.
We now consider each of the terms in the RHS of \eqref{ineq-covariance} separately.

We start with the third term.
The  result follows from the second item of Corollary \ref{lemma-2urns-dcolors} and point $(ii)$ of Proposition \ref{prop:moments} (in Appendix A). More precisely, 
as in the proof Lemma \ref{lemE-C}, 
\begin{eqnarray}
&&\frac{k(k-1)}{n^2}  \sum_{c_1 , c_2\in \phi^{-1}(\ell) } \sum_{j=1}^2  | \E\left(b_{j}(c_j)\right) - \E\left(d_{j}(c_j)\right)  |\nonumber\\ &\le& 4\frac{k(k-1)}{n^2}  d_{TV} ({\mathfrak L}(X_1^{(1)},\dots, X_1^{(\ell+1)}), {\mathfrak L} (W_1^{(1)},\dots, W_1^{(\ell+1)})) \nonumber \\ &\le & \frac4n    (\ell+1)  \E((kp^{(k)}_j)^2)\nonumber \\
&\le& C \frac1n,
\label{ineq2termcov}
\end{eqnarray}
where $C$ is a positive constant.

For the first term, we observe that 
\begin{eqnarray*}
 &&\sum_{c_1 , c_2\in \phi^{-1}(\ell) }\bigg|    \E\left(b_{1}(c_1) b_{2}(c_2)  \right) -\E\left(d_{1}(c_1) d_{2}(c_2)  \right) \bigg|  \\
&\leq & 2  d_{TV} ({\mathfrak L}(\vec X^{(1)},\dots, \vec X^{(\ell+1)}), {\mathfrak L} (\vec W^{(1)},\dots,\vec  W^{(\ell+1)})) .
\end{eqnarray*}
We can then apply the first item of Corollary \ref{lemma-2urns-dcolors} to obtain the following bound
\begin{eqnarray}
&&\frac{k(k-1)}{n^2}  \sum_{c_1 , c_2\in \phi^{-1}(\ell) } \bigg| \E\left(b_{1}(c_1) b_{2}(c_2)  \right) -\E\left(d_{1}(c_1) d_{2}(c_2)  \right) \bigg|  \nonumber \\ &\le &\frac 2n (\ell+1)  \E(k^2(p^{(k)}_1+ p^{(k)}_2)^2)\nonumber \\
&\le & \frac2n (\ell+1) 4\E((kp_1^{(k)})^2)  \le \ C  \frac1n,
\label{ineq1termcov}
\end{eqnarray}
where $C$ is a positive constant and we used the fact that $\E((kp_1^{(k)})^2)$ is finite by point $(ii)$ of Proposition \ref{prop:moments} (in Appendix A).

Finally, we deal with the second term of \eqref{ineq-covariance},
\begin{eqnarray}
|\Cov(d_{1}(c_1), d_{2}(c_2))| & =& \bigg |\E\bigg(\Cov(d_{1}(c_1), d_{2}(c_2)) \ \vert \  p^{(k)}_1, p^{(k)}_2)\bigg)  \nonumber \\
&+& \ \ \Cov\bigg(\E ( d_{1}(c_1)\ \vert  \ p^{(k)}_1, p^{(k)}_2), \ \E(d_{2}(c_2) \ \vert \   p^{(k)}_1, p^{(k)}_2) \bigg)\bigg | \nonumber \\
 & =& |\Cov\bigg(\E ( d_{1}(c_1)\vert  p^{(k)}_1, p^{(k)}_2), \E(d_{2}(c_2) \vert  p^{(k)}_1, p^{(k)}_2)\bigg)| \nonumber \\
 &=& |\Cov(g_{v, c_1}(k p_1^{(k)}), g_{v, c_2}(k p_2^{(k)})) |\nonumber \\
 &\le& \max\{\frac{n^2}{k^2}, \frac{n}{k}\} h(k).
\label{ineq3termcov}
\end{eqnarray}
where $g_{v, c}$ and $h$ are defined as in Lemma \ref{lemma:g_v,c} (in Appendix A) with  $v= \frac{n}{k}(z_n(1), \dots, z_n(\ell), |z_n|)$, where $v(\ell+1) = \frac{n}{k}|z_n| \le \frac{n}{k}$.
The inequality follows from that lemma.

Combining the three inequalities \eqref{ineq2termcov}, \eqref{ineq1termcov} and \eqref{ineq3termcov}, there exists a constant $C$ such that the first term in the RHS of \eqref{eq-cov0} can be bounded by
\begin{eqnarray*}
&& \frac{k(k-1)}{  n^2} \sum_{c_1 \in\phi^{-1}(\ell_1), c_2\in \phi^{-1}(\ell_2) } \Cov\left(b_{1}(c_1) , b_{2}(c_2)  \right) \\
 &\le & C\frac1n  \ + \ h(k) \max\{1, \frac{k}{n}\}  ,
\end{eqnarray*}
where $h(k) \to 0$ as $k\to\infty$.

{\bf Step 2.} Now we consider the second term in  \eqref{eq-cov0},
\begin{eqnarray*}
\Cov\left(b_{1}(c_1), b_{1}(c_1)  \right) = \Var(b_{1}(c_1)) &=&  \P(b_{1}(c_1) = 1)(1-\P(b_{1}(c_1) = 1) ) \\
&\le& \P(b_{1}(c_1) = 1) \le \E(p_1^{(k)}),
\end{eqnarray*}
where we used the fact that $\P(b_{1}(c_1) = 1) $ is bounded from above by the probability of the event of having at least one ball in the first box.
So the second term in  \eqref{eq-cov0} can be bounded by
\begin{eqnarray*}
 \frac{k}{n^2} \sum_{c_1\in \phi^{-1}(\ell) } \Cov\left(b_{1}(c_1), b_{1}(c_1)   \right) &\le& \frac1{n^2}  \E(kp_1^{(k)})|\phi^{-1}(\ell)|.
\end{eqnarray*}
This completes the proof of  the first inequality of Lemma \ref{lemma-secondmoment1}. 

\medskip

We now prove the inequality for $\Var\left(|\Lambda^{k,n}(\bz_n)| \right)$. We write 
$$|\Lambda^{k,n}(\bz_n)| \ = \ \frac{1}{n} (k - B_0),
$$
where $B_0$ is the number of empty boxes. The proof follows the same steps as the proof of the first inequality. If $b_{i, 0}$ is the indicator that box $i$ is empty
\begin{eqnarray*}
 \Var\left(|\Lambda^{k,n}(\bz_n)| \right) 
& = & \frac{k(k-1)}{n^2}  \Cov\left(b_{1, 0} , b_{2, 0}  \right) 
 +  \frac{k}{n^2} \Var\left(b_{1, 0}, b_{1, 0}   \right). 
\end{eqnarray*}
In the first step, analogously to \eqref{ineq-covariance}, we can write
\begin{eqnarray*}
|\Cov\left(b_{1, 0} , b_{2, 0}  \right)| 
\le | \E\left(b_{1, 0}, b_{2 , 0}  \right) -\E\left(d_{1, 0}, d_{2, 0} \right) |+|\Cov(d_{1, 0}, d_{2, 0})| 
+   \sum_{j=1}^2  | \E\left(b_{j, 0}\right) - \E\left(d_{j, 0}\right)  |
\end{eqnarray*}
\blue{where $d_{1,0}, i = \{1,2\}$ is the indicator that a Poisson r.v. with parameter $n |\bz_n|p_i^{(k)}$ is equal to 0. Conditioning on  $(p_{1}^{(k)}, p_{2}^{(k)})$,  $d_{1,0}$ and  $d_{2,0}$ are independent. 
 For the first term we use Lemma \ref{lemma-2urns-1color}.
For the second term, we use a similar bound to equation \eqref{ineq3termcov}, where $g_{v, c_i}, \ i=1,2$  is replaced by $g_{v, c_0}$ defined by $g_{v, c_0}(x):= \exp(-v(\ell + 1)x)$ that has Lipshitz constant $v(\ell + 1) = n/k$ and the inequality follows from Lemma \ref{lemma-lipschitz} instead of Lemma \ref{lemma:g_v,c}.
For the third term we use the Chen-Stein inequality.}
For the second step, the proof is analogous to the proof of the first inequality.
\end{proof}

\subsection{Convergence of the generators}
The previous sections provide the main ingredients to prove Proposition \ref{prop:cv-law}. Before writing this proof, we still need one preliminary result.
\begin{lemma}
\label{lemma-Rna}
Fix $f\in{\cal T}$. For any $A>0$ and $n\in\N$, define
\blue{
\[ 
{\cal R}_{n,A}   \ :=  \sup_{\bz_n \in {\bf z}_n } \left \{ \sum_{k= An+1}^{\infty} \frac{R(k)}{n^{1-\alpha}}\bigg| \E\left( \Lambda^{k,n}f(\bz_n ) \right) - f(\bz_n) \bigg| \right \} ,
\] }
 we have $\lim_{A\to\infty} \lim_{n\to\infty} {\cal R}_{n,A} \ = \ 0.$
\end{lemma}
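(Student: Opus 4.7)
\textbf{Proof plan for Lemma \ref{lemma-Rna}.} The intuition is that when $k \gg n$, a $k$-merger event distributes at most $n$ balls into $k \geq An$ boxes, so most balls land alone and $\Lambda^{k,n}(\bz_n)$ is close to $\bz_n$. This closeness must be quantified sharply: a crude bound using $\|f\|_\infty$ together with the probability of \emph{some} collision yields a contribution of order $n^2/k$ per $k$-event, hence a sum of order $n/A^\alpha$ which diverges. We gain the missing factor $1/n$ by exploiting the Lipschitz structure of $f$.

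Write $f=F\circ\psi_{\vec\lambda}$ with $F$ Lipschitz on $[0,1]^K$ of constant $L$. The first step is the pointwise bound
\[
|\psi_\lambda(\Lambda^{k,n}(\bz_n))-\psi_\lambda(\bz_n)| \;\leq\; \frac{4}{n}\bigl(|\vec N|-|\mathbf{C}^{\vec p^{(k)}}(\vec N)|\bigr), \qquad \lambda\in[0,1],
\]
where $\vec N$ encodes $\bz_n$ as a multiset of $|\vec N|=n|\bz_n|$ balls of sizes $i_1,\ldots,i_{|\vec N|}$. Indeed, writing $n\psi_\lambda(\bz_n)=\sum_r\lambda^{i_r}$ and $n\psi_\lambda(\Lambda^{k,n}(\bz_n))=\sum_{\ell:s_\ell\geq 1}\lambda^{\sum_{r\in\ell}i_r}$, the contribution of each singleton box cancels, and a box containing $s_\ell\geq 2$ balls contributes at most $1+s_\ell\leq 2s_\ell$ in absolute value (since $\lambda\in[0,1]$). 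Using the identity $|\vec N|-|\mathbf{C}^{\vec p^{(k)}}(\vec N)|=\sum_{\ell:s_\ell\geq 2}(s_\ell-1)$ together with $s_\ell\leq 2(s_\ell-1)$ for $s_\ell\geq 2$, we conclude $\sum_{\ell:s_\ell\geq 2}s_\ell\leq 2(|\vec N|-|\mathbf{C}^{\vec p^{(k)}}(\vec N)|)$, whence the claim.

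Combining this with the Lipschitz bound $|f(\Lambda^{k,n}(\bz_n))-f(\bz_n)|\leq L\sum_{j=1}^K|\psi_{\lambda_j}(\Lambda^{k,n}(\bz_n))-\psi_{\lambda_j}(\bz_n)|$, taking expectations, and invoking \eqref{N-C} of Lemma \ref{lemma-combin} with the exchangeability of $(p_1^{(k)},\ldots,p_k^{(k)})$, we obtain
\[
\E\bigl|f(\Lambda^{k,n}(\bz_n))-f(\bz_n)\bigr| \;\leq\; \frac{4LK}{n}|\vec N|^2 k\,\E((p_1^{(k)})^2) \;\leq\; 4LK\,\frac{n}{k}\,\sup_{k}\E\bigl((kp_1^{(k)})^2\bigr),
\]
using $|\vec N|\leq n$ (since $|\bz_n|\leq\sum_i iz_n(i)=1$) and Proposition \ref{prop:moments}(ii), which guarantees the finiteness of the supremum under Assumption \ref{def:finite-xi}(2). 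The bound is independent of $\bz_n$. Finally, applying $R(k)\leq Ck^{-\alpha}$ from Assumption \ref{def:finite-xi}(1) gives, by integral comparison,
\[
{\cal R}_{n,A}\;\leq\;\frac{C'}{n^{1-\alpha}}\sum_{k>An}k^{-\alpha}\cdot\frac{n}{k}\;=\;C'n^\alpha\sum_{k>An}k^{-\alpha-1}\;\leq\;\frac{C''}{A^\alpha},
\]
uniformly in $n$. Sending $n\to\infty$ then $A\to\infty$ yields the result. The single delicate point is the sharp bound on $|\psi_\lambda(\Lambda^{k,n}(\bz_n))-\psi_\lambda(\bz_n)|$; the rest is tail bookkeeping.
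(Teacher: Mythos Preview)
Your proof is correct and follows essentially the same approach as the paper: reduce via the Lipschitz property of $F$ to a pointwise bound of the form $|\psi_\lambda(\Lambda^{k,n}(\bz_n))-\psi_\lambda(\bz_n)|\leq \mathrm{const}\cdot(|\bz_n|-|\Lambda^{k,n}(\bz_n)|)$, take expectations, apply \eqref{N-C} of Lemma \ref{lemma-combin} and exchangeability, then control the tail sum using $R(k)\sim\rho k^{-\alpha}$ and the uniform moment bound of Proposition \ref{prop:moments}(ii). The only cosmetic difference is that the paper obtains the pointwise bound by summing the coordinate-wise estimate \eqref{Nm-Cm} against $\lambda^\ell$ (yielding the constant $2\lambda/(1-\lambda)$ and a separate treatment of $\lambda=1$), whereas your direct box-by-box argument gives a uniform constant $4$ for all $\lambda\in[0,1]$; both are the same idea at the level of ``lost balls''.
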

\begin{proof}
Since $f\in {\cal T}$, there exists a Lipshitz function $F$   and ${\vec \lambda}\in [0,1]^{K}$ such that $f = F\circ \psi_{\vec {\lambda}}$. As a consequence, there exists $C>0$ (that only depends on the choice of $F$), such that
\begin{eqnarray*}\label{eq:sum}
\forall \bz_n\in{\cal Z}_n, \ \ \bigg| \Lambda^{k,n} f(\bz_n) - f(\bz_n) \bigg|     \le 
C \sum_{i=1}^K  \bigg| \E\left(\Lambda^{k,n}\psi_{\lambda_i}(\bz_n)\right)  -  \psi_{\lambda_i}(\bz_n)  \bigg| .
\end{eqnarray*}
In addition, if $\lambda <1$
\begin{eqnarray*} 
\forall \bz_n\in{\cal Z}_n, \ \  \bigg| \E \left( \Lambda^{k,n}\psi_{\lambda}(\bz_n)\right)  -  \psi_{\lambda}(\bz_n)  \bigg| &\le&  \sum_{\ell = 1}^\infty \lambda^\ell |\E \left( \Lambda^{k,n}(\bz_n)(\ell) - \bz_n(\ell) \right)| \\
&\le&  \E\left(|\bz_n| - |\Lambda^{k,n}(\bz_n)|  \right) 2 \sum_{\ell = 1}^\infty \lambda^\ell  , 
\end{eqnarray*}
where in the last line we used  the first part of Lemma \ref{lemma-combin}. 
If $\lambda = 1$, 
\begin{eqnarray*} 
\forall \bz_n\in{\cal Z}_n, \ \  \bigg| \E \left( \Lambda^{k,n}\psi_{\lambda}(\bz_n)\right)  -  \psi_{\lambda}(\bz_n)  \bigg| =  \E\left(|\bz_n| - |\Lambda^{k,n}(\bz_n)|  \right).
\end{eqnarray*}
Observe that $n( |\bz_n| - |\Lambda^{k,n}(\bz_n)| )\ge0$ is the number of blocks that are lost in the coalescence event. Thus, 
we can apply \eqref{N-C} (with $\vec N = n \bz_n$), and set the constant $C' = 2\lambda C/(1-\lambda)$ if $\lambda < 1$ and $C$ if $\lambda = 1$ so that
\begin{eqnarray*}
\sum_{k= An+1}^{\infty} \frac{R(k)}{n^{1-\alpha}}\bigg| \E\left( \Lambda^{k,n}f(\bz_n ) \right) - f(\bz_n) \bigg| 
&\leq  & C'  \ \sum_{k= An+1}^{\infty}\frac{R(k)}{n^{2-\alpha}} \left( n^2 |\bz_n|^2 \E(\sum_{j=1}^k (p_j^{(k)})^2) \right) \nonumber \\
&\leq  &  \frac{C'}n\sum_{k= An+1}^{\infty}\frac{R(k)}{k n^{-\alpha-1}} \E((kp_1^{(k)})^2) .
\label{eq:rna}
\end{eqnarray*}
Notice that in the last line we used the exchangeability of the vector $(p_1^{(k)}, \dots,p_k^{(k)})$ and that the bound does not depend on $\bz_n $.
By Assumptions \ref{def:finite-xi}, $\frac{1}{n} \sum_{k > An } \frac{R(k)}{kn^{-\alpha-1}}\to \int_A^{\infty}\rho\frac{dx}{x^{1+\alpha}}$, 
and by point $(ii)$ of Proposition \ref{prop:moments} (in Appendix A),  $\E((kp_1^{(k)})^2) \to \E(w_1^2)/\E(w_1)^2$, which yields the desired result.
\end{proof}
As a consequence, we get a result that will be useful later on to obtain dominated convergence.

\begin{corollary}\label{cor:ui}
For every $f\in{\cal T}$, $\sup_{n\in\N, {\bz_n\in{\cal Z}_n}}\mathcal{A}_n f( \bz)<\infty$.
\end{corollary}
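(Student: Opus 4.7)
The plan is to split the sum defining $\mathcal{A}_n f(\bz_n)$ at a threshold $k = \lfloor A n \rfloor$ for an arbitrary but fixed $A > 0$, and to control the head and the tail by different arguments. An essential preliminary observation is that $f$ is bounded on ${\cal Z}$: writing $f = F \circ \psi_{\vec{\lambda}}$ with $F$ Lipschitz on $[0,1]^K$, one has $\psi_{\lambda_i}(\bz_n) \in [0,1]$ for every $\bz_n \in {\cal Z}_n$ (since $|\bz_n| \leq \sum_i i\, z_n(i) = 1$ and $\lambda_i \in [0,1]$), so $f$ takes values in the compact set $F([0,1]^K)$ and $\|f\|_\infty < \infty$.

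For the tail $k > An$, I will invoke Lemma~\ref{lemma-Rna}. Its statement $\lim_{A\to\infty}\lim_{n\to\infty}{\cal R}_{n,A} = 0$ implies in particular that, for each fixed $A$, the inner limit $\lim_{n\to\infty} {\cal R}_{n,A}$ exists and is finite, and hence $\sup_n {\cal R}_{n,A} < \infty$; if preferred, the same uniform bound may be read off directly from the explicit estimate in the proof of that lemma, combining the tail behavior of $R(k)$ with point~(ii) of Proposition~\ref{prop:moments} to bound $\E((kp_1^{(k)})^2)$. Since ${\cal R}_{n,A}$ already incorporates the supremum over $\bz_n \in {\cal Z}_n$, this yields a uniform bound on the large-$k$ contribution to $\mathcal{A}_n f(\bz_n)$.

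For the head $k \leq An$, I will use the crude estimate $|\E(\Lambda^{k,n}f(\bz_n)) - f(\bz_n)| \leq 2\|f\|_\infty$, giving
\[
\frac{1}{n^{1-\alpha}} \sum_{k=1}^{\lfloor A n \rfloor} R(k)\, \bigl|\E(\Lambda^{k,n}f(\bz_n)) - f(\bz_n)\bigr| \;\leq\; \frac{2\|f\|_\infty}{n^{1-\alpha}} \sum_{k=1}^{\lfloor A n \rfloor} R(k).
\]
By Assumption~\ref{def:finite-xi}, $R(k) = O(k^{-\alpha})$, so $\sum_{k \leq An} R(k) = O((An)^{1-\alpha})$ and the right-hand side is bounded by a constant depending only on $A$ and $\|f\|_\infty$, uniformly in $n$ and in $\bz_n$.

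Summing the head and tail bounds produces the desired uniform estimate $\sup_{n,\bz_n} |\mathcal{A}_n f(\bz_n)| < \infty$. I do not foresee any real obstacle: all the delicate Lipschitz and Poisson-approximation work has already been packaged into Lemma~\ref{lemma-Rna} for large $k$, and the head is handled by the brute-force $L^\infty$ bound precisely because $\alpha < 1$ makes $\sum_{k \leq n} R(k)$ of the same order as the renormalization factor $n^{1-\alpha}$, so the two regimes match up cleanly.
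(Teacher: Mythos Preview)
Your proposal is correct and follows essentially the same approach as the paper: split the generator at a threshold proportional to $n$, control the tail via Lemma~\ref{lemma-Rna}, and bound the head by $2\|f\|_\infty \sum_{k\leq An} R(k)/n^{1-\alpha}$, which stays bounded since $R(k)\sim\rho k^{-\alpha}$. The paper simply fixes $A=1$ rather than leaving it arbitrary, and you are in fact slightly more careful than the paper in justifying why $\sup_n {\cal R}_{n,A}<\infty$ (pointing to the explicit estimate inside the proof of Lemma~\ref{lemma-Rna} rather than just its limiting statement).
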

\begin{proof}
First,
$$
\bigg| \mathcal{A}_n f( \bz_n ) \bigg|  \ \leq  \   \sum_{k= 1}^{n} \frac{R(k)}{n^{1-\alpha}}\bigg| \E(\Lambda^{k,n} f(\bz_n)) - f(\bz_n) \bigg| + \ {\cal R}_{n,1}.
$$
The second term on the RHS is bounded by Lemma \ref{lemma-Rna}. Finally,
$$
\sum_{k= 1}^{n} \frac{R(k)}{n^{1-\alpha}}\bigg| \E(\Lambda^{k,n} f(\bz_n)) - f(\bz_n) \bigg| \leq 2 ||f||_\infty \sum_{k=1}^n \frac{R(k)}{n^{1-\alpha}}.
$$
Since $R(k)\sim \rho k^{-\alpha}$, the RHS is converging to $\rho \int_0^1 \frac{dx}{x^\alpha}<\infty$. This completes the proof.
\end{proof}

\begin{proof}[Proof of Proposition \ref{prop:cv-law}]
Let $f\in {\cal T}$. We divide $\mathcal{A}_n f( \bz_n )$ in two parts.
$$\mathcal{A}_n f( \bz_n )= \sum_{k= 1}^{An} \frac{R(k)}{n^{1-\alpha}}\bigg(  \E(\Lambda^{k,n}f(\bz_n))   - f(\bz_n) \bigg) + \ \sum_{k= An+1}^{\infty} \frac{R(k)}{n^{1-\alpha}}\bigg(  \E(\Lambda^{k,n}f(\bz_n))   - f(\bz_n) \bigg).
$$
We proceed  by taking successive limits, first when $n\to \infty$ and then when $A\to \infty$.
By Lemma \ref{lemma-Rna}, the second term in the RHS vanishes and it remains to show that
$$
\lim_{A\to\infty} \lim_{n\to\infty} \sum_{k= 1}^{A n} \frac{R(k)}{n^{1-\alpha}}\bigg(  \E(\Lambda^{k,n}f(\bz_n) )  - f(\bz_n) \bigg) =  \int_0^{+\infty} \bigg( {\cal C}^x f({\bf z}) - f({\bf z}) \bigg)\rho x^{-\alpha}dx. 
$$  
Next,
\begin{eqnarray}
  \sum_{k= 1}^{An} \frac{R(k)}{n^{1-\alpha}} \bigg( \E(\Lambda^{k,n}f(\bz_n))   - f(\bz_n) \bigg) &=& \sum_{k = 1}^{An} \frac{R(k)}{n^{1-\alpha}} \left( \mathcal{C}^{{k}/{n}} f(\bz_n)- f(\bz_n)\right)\nonumber\\
&+&  \sum_{k = 1}^{An} \frac{R(k)}{n^{1-\alpha}} \left(f( \E( \Lambda^{k,n} {\bf z}_n ))  -\mathcal{C}^{{k}/{n}} f(\bz_n) \right) \nonumber \\
&+&\sum_{k = 1}^{An} \frac{R(k)}{n^{1-\alpha}} \bigg(\E(\Lambda^{k,n}f(\bz_n))-  f( \E( \Lambda^{k,n} {\bf z}_n ) )   \bigg),
\label{eq:generator2-3terms}
\end{eqnarray}
where the expectation is taken coordinatewise.
The rest of the proof will be decomposed into three steps. In the first one, we will show that the first term converges to ${\cal A} f(\bz)$. In the second and third ones, we will show
that the second and third terms on the RHS vanish. To do so, we will use the first and second moment estimates derived in this section.

{\bf Step 1.} We have

\begin{eqnarray*}
\frac{1}{n}\sum_{k = 1}^{An} \frac{R(k)}{n^{-\alpha}} \left(  \mathcal{C}^{{k}/{n}} f(\bz_n) - f(\bz_n)\right) &=&
\frac{1}{n}\sum_{k = 1}^{An} \frac{R(k)}{n^{-\alpha}} \left(  \mathcal{C}^{{k}/{n}} f(\bz) - f(\bz)\right)  \\
&+& \frac{1}{n}\sum_{k = 1}^{An} \frac{R(k)}{n^{-\alpha}} \left(  \mathcal{C}^{{k}/{n}} f(\bz_n) - f(\bz_n) - \mathcal{C}^{{k}/{n}} f(\bz) + f(\bz)\right). 
\end{eqnarray*}
For the first term, 
we first note that ${\cal C}^{x}(f)({\bz})$ is continuous in $x$. (This can be shown by a standard domination argument).  This implies that 
\[ \frac{1}{n}\sum_{k = 1}^{An} \frac{R(k)}{n^{-\alpha}} \left(  \mathcal{C}^{{k}/{n}} f(\bz) - f(\bz)\right) \ 
\to \ \int_0^A  \bigg( \mathcal{C}^{x}f(\bz)  - f(\bz) \bigg) \rho x^{-\alpha} dx. \]
We now prove that the second term converges to 0. Since $f\in {\cal T}$, there exists a Lipshitz function $F$ and ${\vec \lambda}\in [0,1]^{K}$ such that $f = F\circ \psi_{\vec {\lambda}}$, so there exists $C>0$ such that
\begin{eqnarray*}
 \vert \ \mathcal{C}^{{k}/{n}} f(\bz_n) - \mathcal{C}^{{k}/{n}} f(\bz) \vert  \le C \vert (\psi_{\vec \lambda}( \mathcal{C}^{{k}/{n}}(\bz_n)) - \psi_{\vec \lambda}( \mathcal{C}^{{k}/{n}}(\bz)) \vert. 
\end{eqnarray*}
It will be shown in Proposition \ref{prop1} that for $\lambda \in [0, 1/4)$,
\begin{eqnarray*}
\psi_\lambda( \mathcal{C}^{{k}/{n}}(\bz)) & = & 
\frac{k}{n} \E\left( \exp(-(|\bz| - \psi_\lambda(\bz))\frac{\Gamma n}{k}) - \exp(-|\bz|\frac{\Gamma n}{k}) \right), 
\end{eqnarray*}
where $|\bz| - \psi_\lambda(\bz)\ge 0$. Since the exponential function is Lipschitz on $(-\infty, 0)$, there exists a constant $B$ such that
\begin{eqnarray*}
\bigg |\psi_\lambda( \mathcal{C}^{{k}/{n}}(\bz_n)) - \psi_\lambda( \mathcal{C}^{{k}/{n}}(\bz)) \bigg | & \le & B \E(\Gamma) \left( \left||\bz| - |\bz_n|\right|  + \left| \psi_\lambda(\bz)  - \psi_\lambda(\bz_n)\right | \right).
\end{eqnarray*}
 This bound is independent of $k$ and goes to 0 as $n\to \infty$, which completes the proof.

{\bf Step 2.} 
We prove that the absolute value of the second term in \eqref{eq:generator2-3terms} converges to 0. Since $f\in {\cal T}$,
the problem boils down to proving that for every $\lambda \in [0,1]$, 
$$
\lim_{n\to\infty} \  \sum_{k = 1}^{An} \frac{R(k)}{n^{1-\alpha}}\sum_{\ell = 1}^\infty \lambda^\ell |\E(\Lambda^{k,n}(\bz_n)(\ell)) - \cC^{{k}/{n}}(\bz_n)(\ell) | \ = \ 0
 $$
 and
 $$
\lim_{n\to\infty} \  \sum_{k = 1}^{An} \frac{R(k)}{n^{1-\alpha}}|\E(|\Lambda^{k,n}(\bz_n)|) - |\cC^{{k}/{n}}(\bz_n)| | \ = \ 0.
 $$
Using Lemma \ref{lemE-C},
$$
\textrm{for $\lambda <1$, } \ \sum_{k = 1}^{An} \frac{R(k)}{n^{1-\alpha}}\sum_{\ell = 1}^\infty \lambda^\ell |\E(\Lambda^{k,n}(\bz_n)(\ell)) - \cC^{\frac{k}{n}}(\bz_n)(\ell) | \ \leq \ 2  \bigg( I_{n, A} + J_{n, A}\bigg)\sum_{\ell = 1}^\infty \lambda^\ell (\ell+1)
$$
and
$$
\sum_{k = 1}^{An} \frac{R(k)}{n^{1-\alpha}}|\E(|\Lambda^{k,n}(\bz_n)|) - |\cC^{\frac{k}{n}}(\bz_n)| | \ \leq \ 2  \bigg( I_{n, A} + J_{n, A}\bigg)
$$
where 
\[  I_{n, A}\ :=\ \sum_{k=1}^{An} \frac{R(k)}{n^{2-\alpha}} \E( k p_1^{(k)})
\textrm{ and }   J_{n, A}\ :=\sum_{k=1}^{An} \frac{R(k)}{n^{1-\alpha}} \E( |\Gamma - kp_1^{(k)}|).\]
By Assumptions \ref{def:finite-xi},   and  since $\E(kp_1^{(k)}) =1$, we have
 $I_{n, A}  \sim \  \frac{1}{n} \int_0^{A}  \rho x^{-\alpha}{dx}\to 0$ as $n\to\infty$.

By point $(ii)$ of Proposition \ref{prop:moments} in the Appendix, the sequence $(k p_1^{(k)}; k\in\N)$ is uniformly integrable so that $\E( |\Gamma - kp_1^{(k)}|) \to 0$ and
by a similar integral-sum comparison, $J_{n, A} \to 0$ as $n\to\infty$.

{\bf Step 3.} 
Finally, we prove that the term on the third line of \eqref{eq:generator2-3terms} converges to 0. 
As in the previous step, it is enough to prove that for every $\lambda <1 $,
\begin{equation*}
\sum_{k = 1}^{An} \frac{R(k)}{n^{1-\alpha}}  \sum_{\ell = 1}^\infty \lambda^\ell \E\left( \left|\Lambda^{k,n}(\bz_n)(\ell)- \E(\Lambda^{k,n}(\bz_n)(\ell)) \right |\right) \to 0
\end{equation*}

and 
\begin{equation*}
\sum_{k = 1}^{An} \frac{R(k)}{n^{1-\alpha}}  \E\left( \left||\Lambda^{k,n}(\bz_n)|- \E(|\Lambda^{k,n}(\bz_n)|)) \right |\right) \to 0. 
\end{equation*}
We start by proving the first limit. Recall that $\E\left( \left|\Lambda^{k,n}(\bz_n)(\ell)- \E(\Lambda^{k,n}(\bz_n)(\ell))\right| \right) \le 2 \E(\Lambda^{k,n}(\bz_n)(\ell)) \le 2 $
since $\Lambda^{k,n}(\bz_n) \in {\cal Z}_n$, 
$\sum_{\ell = 1}^\infty \E(\Lambda^{k,n}(\bz_n)(\ell)) \leq 1$. Since $\sum_{k = 1}^{An} \frac{R(k)}{n^{1-\alpha}}\to \rho \int_0^A \frac{dx}{x^\alpha}$, it is enough to prove that for every $\ell_0$ 
\begin{equation*}
\sum_{k = 1}^{An} \frac{R(k)}{n^{1-\alpha}}  \sum_{\ell = 1}^{\ell_0} \lambda^\ell \E\left( \left|\Lambda^{k,n}(\bz_n)(\ell)- \E(\Lambda^{k,n}(\bz_n)(\ell)) \right |\right) \to 0.
\end{equation*}
By applying succesively Cauchy-Schwarz and Jensen's inequality, 
\begin{eqnarray*}
\sum_{\ell=1}^{\ell_0} \lambda^\ell \E\left( \left|\Lambda^{k,n}(\bz_n)(\ell)- \E(\Lambda^{k,n}(\bz_n)(\ell)) \right |\right) 
&\le  &  \sqrt{\sum_{\ell=1}^{\ell_0}  \lambda^\ell} \sqrt{ \sum_{\ell=1}^{\ell_0} \lambda^\ell \E\left( \left|\Lambda^{k,n}(\bz_n)(\ell)- \E(\Lambda^{k,n}(\bz_n)(\ell)) \right |\right)^2 } \\  
& \le & \sqrt{\sum_{\ell=1}^{\ell_0}  \lambda^\ell} \sqrt{ \sum_{\ell=1}^{\ell_0} \lambda^\ell {\Var(\Lambda^{k,n}(\bz_n)(\ell))}}.  
\end{eqnarray*}
Therefore, it is enough to prove that for every $\ell_0$
\begin{equation}\label{eq:finnn}
\sum_{k = 1}^{An} \frac{R(k)}{n^{1-\alpha}} \sqrt{ \sum_{\ell = 1}^{\ell_0} \lambda^\ell \Var(\Lambda^{k,n}(\bz_n)(\ell)) }\to 0.
\end{equation}
Using the first item of Lemma \ref{lemma-secondmoment1}, for every $\ell_0$ there exists a constant $C$  and a function $h(k)\to0$ such that 
\[ \forall k\leq An, \  \sum_{\ell = 1}^{\ell_0} \lambda^\ell \Var(\Lambda^{k,n}(\bz_n)(\ell))  \le C  (\frac{1}{n}+ h(k))  \]
with $h(k)\to 0$ as $k\to\infty$.
Since $\sum_{k = 1}^{An} \frac{R(k)k}{n^{2-\alpha}}\to \rho \int_0^A \frac{dx}{x^{1-\alpha}}$, it is easy to show (\ref{eq:finnn}) from there.

 The second limit can be shown along the same lines.

\end{proof}

\section{Martingale problem. Proof of Theorem \ref{Mgprob}  }
\label{sect-def-limitingprocess}
\subsection{The coagulation operator}
In this section, we study some properties of the coagulation operator ${\cal C}^x$ defined in \eqref{defCx}.

Our results will be both based on the following interpretation of the operator ${\cal C}^x$.  
Conditional on a realization of the random variable $\Gamma=w_1/\E(w_1)$ and consider the sequence of random variables
$\vec N_{x,{\bf z}}:=(N_{x,{\bf z}}(i); i\in \N)$ such that conditional on $\Gamma$, the $N_{x,{\bf z}}(i)$'s are independent  and Poisson distributed with respective parameters  $\Gamma\bz(i)/x$. 
Define $\mathbf{C}^1(\vec N_{x,{\bf z}})$ as the random vector such that $\mathbf{C}^1(\vec N_{x,{\bf z}})(\ell) = \1_{\{ \sum iN_{x,{\bf z}}(i) = \ell\}}$. Using the notations of Lemma \ref{lemma-combin},  $\mathbf{C}^1(\vec N_{x,{\bf z}})$ can be seen as the trivial coagulation operator associated to a single urn, applied to $\vec N_{x,{\bf z}}$. 
The following relation will be useful for the next results
\begin{equation}\label{CxC1}
\frac{1}{x}{\cal C}^{x}({\bf z})(\ell) = \E( \mathbf{C}^1(\vec N_{x,{\bf z}})(\ell)  ).\end{equation}

\begin{proposition}
\label{prop1}
Let $x>0$ and ${\bf z}\in{\cal Z}$. The vector ${\cal C}^x(\bz)$ is in ${\cal Z}$  and for every $\lambda\in[0,1]$,
\begin{equation}  \label{eq:cha}
{\cal C}^x {\psi}_{\lambda}(\bz ) \ = \ \E\bigg( x \exp(-|\bz| \Gamma/x)\left( \exp\left( \psi_\lambda(\bz)\Gamma /x\right) -1 \right) \bigg).
\end{equation}
\end{proposition}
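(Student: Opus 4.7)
The plan rests entirely on the probabilistic representation \eqref{CxC1}, which identifies $\mathcal{C}^x(\bz)(\ell)/x$ with the probability $\P(\sum_i i N_{x,\bz}(i)=\ell)$, where, conditional on $\Gamma$, the variables $(N_{x,\bz}(i))_{i\geq 1}$ are independent Poisson with parameters $\Gamma z(i)/x$. Strictly speaking this relation is stated just before Proposition~\ref{prop1}, so I would begin by giving a one-line justification of it: decompose the event $\{\sum_i i N_{x,\bz}(i)=\ell\}$ over the partitions $c \in \phi^{-1}(\ell)$, use independence of the $N_{x,\bz}(i)$'s conditional on $\Gamma$, and note that the exponential factors from both the indices $i\leq \ell$ and the indices $i>\ell$ (where $N_{x,\bz}(i)=0$ is forced) combine into $e^{-\Gamma|\bz|/x}$ since $|\bz|=\sum_i z(i)$ is finite.

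For the first assertion, that $\mathcal{C}^x(\bz)\in\mathcal{Z}$, I would simply compute, by Tonelli (all terms nonnegative),
\begin{align*}
\sum_{\ell\geq 1}\ell\,\mathcal{C}^x(\bz)(\ell) \;=\; x\,\E\!\Bigl(\sum_i i N_{x,\bz}(i)\Bigr) \;=\; x\cdot\frac{\E(\Gamma)}{x}\sum_i i z(i) \;=\; 1,
\end{align*}
using $\E(\Gamma)=\E(w_1)/\E(w_1)=1$ and $\bz\in\mathcal{Z}$. This immediately gives both $\mathcal{C}^x(\bz)\in\ell^1(\R_+)$ (in fact $\sum_\ell \mathcal{C}^x(\bz)(\ell)\leq 1$) and the normalisation $\sum_i i\,\mathcal{C}^x(\bz)(i)=1$.

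For the generating-function identity \eqref{eq:cha}, I would write
\begin{align*}
\mathcal{C}^x\psi_\lambda(\bz) \;=\; \sum_{\ell\geq 1}\lambda^\ell\,\mathcal{C}^x(\bz)(\ell) \;=\; x\,\E\bigl(\lambda^{\sum_i i N_{x,\bz}(i)}\,\1_{\{\sum_i i N_{x,\bz}(i)\geq 1\}}\bigr),
\end{align*}
again swapping sum and expectation by nonnegativity (and boundedness by $\lambda\in[0,1]$). Conditioning on $\Gamma$ and exploiting independence, the probability generating function of a Poisson$(\mu)$ variable $N$, $\E(\lambda^{iN})=\exp(\mu(\lambda^i-1))$, yields
\begin{align*}
\E\bigl(\lambda^{\sum_i i N_{x,\bz}(i)}\,\big|\,\Gamma\bigr) \;=\; \exp\!\Bigl(\tfrac{\Gamma}{x}\bigl(\psi_\lambda(\bz)-|\bz|\bigr)\Bigr),
\end{align*}
while $\P(\sum_i i N_{x,\bz}(i)=0\mid\Gamma)=\exp(-\Gamma|\bz|/x)$. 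Subtracting the second from the first, factoring out $\exp(-\Gamma|\bz|/x)$, and taking expectation over $\Gamma$ produces exactly \eqref{eq:cha}.

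There is no real obstacle here: the argument is a routine computation with Poisson generating functions combined with the observation that $\E(\Gamma)=1$. The only points that require a moment of care are (i) justifying the Fubini/Tonelli swaps — trivial by nonnegativity — and (ii) checking that the infinite product of exponentials $\prod_i e^{-\Gamma z(i)/x}$ genuinely reconstructs $e^{-\Gamma|\bz|/x}$, which uses the absolute summability of $\bz$.
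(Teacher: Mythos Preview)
Your argument is correct, and for the first assertion ($\mathcal{C}^x(\bz)\in\mathcal{Z}$) it coincides with the paper's computation. For the generating-function identity \eqref{eq:cha}, however, you take a genuinely different and considerably simpler route. The paper defines $\rho(\lambda)$ as the right-hand side of \eqref{eq:cha}, invokes Fa\`a di Bruno's formula (Lemma~\ref{lem:faa} in Appendix~B) to identify $\rho^{(\ell)}(0)/\ell!$ with $\mathcal{C}^x(\bz)(\ell)$, then bounds the Taylor remainder (Lemma~\ref{lem:mclaurin}) to show the Maclaurin series actually sums to $\rho$ on a small neighbourhood of $0$, and finally appeals to analytic continuation to reach all of $[0,1]$. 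Your approach bypasses this entire analytic machinery: you compute $\sum_{\ell\ge1}\lambda^\ell\,\mathcal{C}^x(\bz)(\ell)$ directly as $x\,\E(\lambda^S\mathbf{1}_{\{S\ge1\}})$ with $S=\sum_i iN_{x,\bz}(i)$, factor over the independent Poisson variables via dominated convergence, and read off the answer. This is more transparent, works uniformly on $[0,1]$ without any analytic-continuation step, and renders Appendix~B unnecessary for this proposition. The paper's route does have the minor by-product of explicit derivative formulas for $\rho$, but these are not used elsewhere; your probabilistic computation is the cleaner proof.
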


\begin{proof}
We first prove that  ${\cal C}^x(\bz)$ is in ${\cal Z}$. From \eqref{CxC1}, we have
  \begin{eqnarray*}
\sum_{\ell=1}^\infty \ell {\cal C}^{x}({\bf z})(\ell)& =& x\sum_{\ell =1}^\infty \ell \P(\sum_{i=1}^\infty iN_{x,{\bf z}}(i) = \ell) \\ &=& x\E(\sum_{i=1}^\infty iN_{x,{\bf z}}(i))  = x \frac{\E(\Gamma)}{x} \sum_{i=1}^\infty i \bz(i) = 1.
\end{eqnarray*}
Define
\[\rho(\lambda) \ := \ \E \bigg( x \exp(-{|\bz|\Gamma}/{x})  (\exp\left( \psi_\lambda(\bz)\Gamma /x\right) -1) \bigg). \]
According to Lemma \ref{lem:faa} (in Appendix B), for $\ell\in\N$,
\[ \frac{1}{\ell !} \frac{d^\ell}{d \lambda^\ell} \rho(\lambda) \bigg|_{\lambda=0} \ = \   \E\left( x \exp(-{|\bz|\Gamma}/{x}) \sum_{c \in \phi^{-1}(\ell)}   \prod_{i=1}^\ell  \frac{(z(i)\Gamma/x)^{c(i)}}{c(i) !}\right) \]
whose expression coincides with the $\ell^{th}$ coordinate of ${\cal C}^x(\bz)$ in \eqref{defCx}.
In order to prove (\ref{eq:cha}), it remains to show that the Mac-Laurin expansion of $\rho$ converges to $\rho$ pointwise on in a neighborhood of $0$.
The result for $\lambda \in [0,1]$ is obtained by standard analytic continuation.
To do so, we use Taylor's theorem and prove that the remainder $R_\ell(\lambda)$ converges to 0. Let $\delta>0$. Using Lemma \ref{lem:mclaurin}, for every $\lambda<\delta$ we have
\[R_\ell(\lambda) = \int_0^\lambda \frac{\rho^{(\ell+1)}(t)}{\ell!}(\lambda -t)^\ell dt \le x\frac{(2-\delta)^{\ell}} {(1-\delta)^{2(\ell+1)}} \int_0^\lambda (\ell+1) (\lambda -t)^\ell dt\ \ge \frac{ x}{2-\delta} \left(\frac{(2-\delta)\delta} {(1-\delta)^{2}}\right)^{\ell+1}, \]
which converges to 0 as $\ell \to \infty$ for $\delta$ small enough. 

\end{proof}
The next result is useful to study the integrability of the generator $\mathcal A$.
\begin{lemma}\label{lem:endo}
For every $\bz\in{\cal Z}$  we have
$$
\vert \psi_\lambda(\bz) - {\cal C}^{x}\psi_{\lambda}(\bz) \vert \ \leq \ \frac{1}{x}  \frac{2\lambda}{1-\lambda} \ \textrm{ if  $\lambda\in[0,1)$,}
$$
and
$$
\vert \psi_1(\bz) - {\cal C}^{x}\psi_1(\bz) \vert \ \leq \ \frac{1}{x} .  
$$
\end{lemma}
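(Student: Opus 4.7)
The approach will be to combine the closed form of $\cC^x \psi_\lambda(\bz)$ supplied by Proposition~\ref{prop1},
\[
\cC^x\psi_\lambda(\bz) \;=\; \E\!\Big(x\,e^{-|\bz|\Gamma/x}\bigl(e^{\psi_\lambda(\bz)\Gamma/x}-1\bigr)\Big),
\]
with an integral representation that makes $\psi_\lambda(\bz) - \cC^x\psi_\lambda(\bz)$ manifestly nonnegative and easy to estimate. Writing $A:=\psi_\lambda(\bz)$ and $b:=|\bz|$, the elementary identity $e^{-(b-A)\Gamma/x}-e^{-b\Gamma/x}=\int_{(b-A)/x}^{b/x}\Gamma e^{-\Gamma s}\,ds$ together with Fubini gives
\[
\cC^x\psi_\lambda(\bz)=x\int_{(b-A)/x}^{b/x}\E\!\bigl(\Gamma e^{-\Gamma s}\bigr)\,ds.
\]
Since $\E(\Gamma)=1$ (because $\Gamma = w_1/\E(w_1)$), the interval of integration has length $A/x$ and hence $\psi_\lambda(\bz)= A =x\int_{(b-A)/x}^{b/x}\E(\Gamma)\,ds$. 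Subtracting,
\[
\psi_\lambda(\bz)-\cC^x\psi_\lambda(\bz)=x\int_{(b-A)/x}^{b/x}\E\!\bigl(\Gamma(1-e^{-\Gamma s})\bigr)\,ds\;\ge\;0,
\]
which at once gives the sign of the difference.

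For the quantitative bound, the plan is to estimate the integrand via $1-e^{-\Gamma s}\le\Gamma s$, yielding $\E(\Gamma(1-e^{-\Gamma s}))\le s\,\E(\Gamma^2)$; the finiteness of $\E(\Gamma^2)=\E(w_1^2)/\E(w_1)^2$ is guaranteed by Assumption~\ref{def:finite-xi}(2). Integrating this bound over $[(b-A)/x,\,b/x]$ produces
\[
\psi_\lambda(\bz)-\cC^x\psi_\lambda(\bz)\;\le\;\frac{\E(\Gamma^2)}{2x}\,\psi_\lambda(\bz)\bigl(2|\bz|-\psi_\lambda(\bz)\bigr).
\]
Then I will plug in the two structural inequalities $\psi_\lambda(\bz)\le\lambda|\bz|$ (valid because $\lambda^i\le\lambda$ for $i\ge 1$, $\lambda\in[0,1]$) and $|\bz|\le\sum_i iz(i)=1$ (from $\bz\in\cZ$). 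The case $\lambda=1$ follows immediately by taking $A=b=|\bz|$, giving a product $b^2\le 1$. For $\lambda\in[0,1)$, the product $\psi_\lambda(\bz)(2|\bz|-\psi_\lambda(\bz))\le 2\lambda$, so the same integral bound already yields an estimate of order $\lambda/x$; the factor $\tfrac{1}{1-\lambda}$ appears if one instead writes the difference coordinatewise as $\sum_{\ell\ge 1}\lambda^\ell(z(\ell)-\cC^x(\bz)(\ell))$ and bounds the summands uniformly, summing against the geometric series $\sum_{\ell\ge 1}\lambda^\ell=\lambda/(1-\lambda)$.

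The main obstacle will be controlling the integrand $1-\E(\Gamma e^{-\Gamma s})$. It vanishes at $s=0$ thanks to $\E(\Gamma)=1$, and its first-order behavior is $s\,\E(\Gamma^2)$, so the argument crucially relies on the second-moment assumption. The rest is book-keeping: turning the integral bound into the two displayed inequalities and verifying that the cases $\lambda\in[0,1)$ and $\lambda=1$ both fit the stated form.
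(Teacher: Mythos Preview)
Your argument is correct and actually sharper than the paper's for $\lambda<1$: the integral representation
\[
\psi_\lambda(\bz)-\cC^x\psi_\lambda(\bz)=x\int_{(b-A)/x}^{b/x}\E\!\bigl(\Gamma(1-e^{-\Gamma s})\bigr)\,ds
\]
followed by $1-e^{-\Gamma s}\le \Gamma s$ is clean and yields the bound $\tfrac{\E(\Gamma^2)}{2x}A(2b-A)\le \tfrac{\E(\Gamma^2)\lambda}{x}$, without any $(1-\lambda)^{-1}$ factor. The paper's route is different and more combinatorial: it passes through the probabilistic interpretation $\tfrac{1}{x}\cC^x(\bz)(\ell)=\E(\mathbf C^1(\vec N_{x,\bz})(\ell))$, invokes Lemma~\ref{lemma-combin} to bound $|z(\ell)-\cC^x(\bz)(\ell)|$ uniformly in $\ell$ by $2x\,\E(|\vec N_{x,\bz}|-|\mathbf C^1(\vec N_{x,\bz})|)$, and then sums the geometric series $\sum_\ell\lambda^\ell$, which is where the $(1-\lambda)^{-1}$ enters. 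Your approach avoids Lemma~\ref{lemma-combin} entirely and works directly from Proposition~\ref{prop1}, at the cost of using the second-moment hypothesis $\E(\Gamma^2)<\infty$ (which the paper's argument does not need explicitly at this step).

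One remark on constants: your bound carries the factor $\E(\Gamma^2)$, so it does not literally reproduce the displayed constants $1$ and $\tfrac{2\lambda}{1-\lambda}$ in the lemma. This is harmless: the only use of Lemma~\ref{lem:endo} in the paper is to ensure integrability at $x=\infty$ in the definition of $\cA$ (proof of Theorem~\ref{Mgprob}) and to dominate $g(\bz,x)$ in Step~1 of that proof, and for both purposes any bound of the form $C/x$ suffices. (In fact the paper's own proof, as written, bounds $\E(|\vec N_{x,\bz}|-|\mathbf C^1(\vec N_{x,\bz})|)$ by $\E(|\vec N_{x,\bz}|)=|\bz|/x$, which after multiplying by the missing factor $x$ gives only $|\bz|\le 1$; the sharper $O(1/x)$ bound needs the second factorial moment, exactly as in your computation.)
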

\begin{proof}
 Let $x>0$ and condition on a realization of $\Gamma$. 
 \blue{ From the definition of our trivial coagulation operator, $|\mathbf{C}^1(\vec N_{x,z})| =\1_{\{|\vec N_{x,z}|>0\}}´$, 
 \begin{eqnarray}\label{eq-bornelemma8}
\mathbb{E}(|\vec N_{x,z}| -|\mathbf{C}^1(\vec N_{x,z})| ) & = & \mathbb{E}(|\vec N_{x,z}| \1_{\{|\vec N_{x,z}|\ge 2\}} \nonumber \\
 &\le & \mathbb{E}(|\vec N_{x,z}|) = \frac{\E(\Gamma)|\bz|}{x}
\end{eqnarray}
which is the desired result  for $\lambda = 1$, since $\E(\Gamma) = 1$.
}

By Lemma \ref{lemma-combin},  for $ \ell\in\N$,
\begin{eqnarray*}
 \E(| \vec{N}_{x,{\bf z}}(\ell) - \mathbf{C}^1(\vec N_{x,{\bf z}})(\ell)  |) &\leq& 2 \E(|\vec N_{x,{\bf z}}| - |\mathbf{C}^1(\vec N_{x,{\bf z}})|),
\end{eqnarray*}
which, combined with \eqref{eq-bornelemma8}, yields the result for $\lambda <1$ by summing over $\ell$.
\end{proof}

\subsection{Martingale problem}

\begin{proof}[Proof of Theorem \ref{Mgprob}]
 Lemma \ref{lem:endo}, ensures that the integral with respect to $x$ in the operator ${\cal A}$ is  integrable at $\infty$.
This, together with the fact that  $x\to x^{-\alpha}$ is integrable at $0$ shows that the operator ${\cal A}$ is well defined.
We now proceed in three steps.

{\bf Step 1.} Let $(\mu_t; t\geq0)$ be a solution to the martingale problem.
Fix $K \in \N$ and  $\vec \lambda\in \{1\}\times[0,1)^{K-1}$.
Define the projected process $(y_t^{\vec{\lambda}}; t\geq 0) := (\psi_{\vec\lambda}(\mu_t); t\geq 0)$.
\blue{In Step 1, we are going to prove the uniqueness in law of the projected process.}
Notice that, since $\lambda_1=1$, the first coordinate of $y_t^{\vec{\lambda}}$ corresponds to $|\mu_t|$.
Let ${\cal B}$ be the operator acting on $C^2{([0,1]^K)}$ such that for every $\vec y  = (y_1, \dots, y_K)  \in [0, 1]^K$
\begin{eqnarray*}  
 {\cal B} F(\vec y) 
 & := & \int_0^\infty \bigg(F\left( \E(x\exp(- y_1 \Gamma/x) (\exp( \vec{y} \Gamma/x)-1) )\right) - F(\vec{y}) \bigg) \rho x^{-\alpha}dx
 \end{eqnarray*}
 where $\exp(\vec{u})$ is the vector with coordinates $\{\exp(u_i)\}_{i=1}^K$, \blue{$\exp(\vec u ) - 1$ is the vector with coordinates $(\exp(u_i ) - 1)^K_{i=1}$} and the expected value is taken w.r.t. $\Gamma$.
 \blue{It is straightforward to see that Proposition \ref{prop1} implies that $(y_t^{\vec{\lambda}}; t\geq 0)$ satisfies this martingale problem}
 \begin{equation*} \left( F(y^{ \vec{\lambda}}_t) - \int_0^t {\mathcal B} \ F(y^ {\vec{\lambda}}_s) ds;  t\geq0\right) \ \ \ \mbox{is a martingale for every  $F\in C^2([0,1]^{K})$.} 
\end{equation*}   
 \blue{To conclude, we now show that the solution to this problem is unique.}

According to Theorem 5.1 in \cite{Bas04}, we need to check that for every $\vec \lambda\in \{1\}\times[0,1]^{K-1}$ and every measurable set  $B\subset \R \setminus\{0\}$, the function
\[\bz \to G_B(\bz) := \int_B g(\bz, x) dx : = \int_B  \frac{\left|\psi_{\vec{\lambda}}(\bz)-\cC^{x}\psi_{\vec{\lambda}}(\bz)\right|^2}{|\psi_{\vec{\lambda}}(\bz)-\cC^{x}\psi_{\vec{\lambda}}(\bz)|^2+1} \rho x^{-\alpha}dx\]
is a continuous and bounded function. 
By standard continuity theorem under the integral, this boils down to proving that for every $x \in \R\setminus \{0\}$, $\bz \to g(\bz, x)$ is continuous and that there exists a function $h$ satisfying $\int_B h(x)dx <\infty$ and  such that for every $\bz \in {\cal Z}$, $|g(\bz, x)| \leq h(x)$. 
First, observe that $ \bz  \to {\cal C}^{x}\psi_{\lambda}(\bz)$ is continuous. Since $\mathcal{C}^x (\bz)$ is defined as an expectation with respect to $\Gamma$ (see \ref{defCx}), we use again a standard continuity under the integral theorem, by noticing that the quantity inside the expectation is bounded uniformly by $x$. This implies the continuity of $\bz \to \left|\psi_{\vec{\lambda}}(\bz)-\cC^{x}\psi_{\vec{\lambda}}(\bz)\right|^2$  on $(0,\infty)$. 
The continuity of $\bz \to g(\bz, x)$ follows from there.
The existence of the upper bound $h$ follows from two observations. First, 
$$
\forall x\in (0,1]\cap B, \ \ g(\bz, x) \le  \rho x^{-\alpha}.
$$
Second, $$\forall x \in[1,\infty)\cap B, \ \ g(\bz, x) \le \sum_{i = 1}^K \left|\psi_{\lambda_i}(\bz)-\cC^{x}\psi_{\lambda_i}(\bz)\right|^2 \rho x^{-\alpha} $$
which, combined with Lemma \ref{lem:endo}, implies the existence of a constant $C$ such that for $\bz\in{\cal Z}$
$$\forall x \in[1,\infty]\cap B, \ \ g(\bz, x) \le K \frac{C^2}{x^{2+\alpha}}. $$

{\bf Step 2.} Let us study the uniqueness of the solution to our martingale problem.  
Fix $t_1<\dots<t_n$ and consider the multidimensional process  $Z:\lambda\to (\psi_\lambda(\mu_{t_1}), \dots, \psi_\lambda(\mu_{t_n}))$ on $[0,1]$ (the ``time'' parameter is now $\lambda$).  
The previous step shows that the finite dimensional distributions of $Z$ are uniquely determined.
Since $|\mu_t|<1$, the radius of convergence of $\sum_{i} \mu_t(i) \lambda^i$ is \blue{at least} $1$ and the process $Z$
is continuous a.s. This implies that the distribution of $(\psi_\lambda(\mu_{t_1}), \dots, \psi_\lambda(\mu_{t_n}); \lambda \in[0,1))$ is uniquely determined.

Finally, we can differentiate $\psi_\lambda(\mu_t)$ under the sum at $0$ infinitely many times 
to recover $\mu_t$ from its moment generating function, i.e.,
\[\forall k\in\N, \ \ \mu_t(k) = \frac{1}{k!} \frac{d^k}{d\lambda^k}\psi_\lambda(\mu_t) \vert_{\lambda=0}. \]
This shows that the finite dimensional distributions of $\mu_t$ are uniquely determined.

{\bf Step 3.} The existence of a solution follows from our convergence result (Theorem \ref{thm:conv-process}).
\end{proof}

\section{Convergence to the limiting process. Proof of Theorem \ref{thm:conv-process}}
\label{sect:convergence}
\blue{In this section, we prove convergence in  $D([0,T], {\cal Z})$ equipped with the Skorokhod $M_1$ topology. The proof is based on a useful characterization of tightness in $M_1$ (see Theorem 12.12.3 and Remark 12.3.2 in \cite{Whitt}).
We work with $M_1$ instead of the more commonly used $J_1$ because, as far as we know, it is cumbersome to apply similar arguments for the $J_1$ topology.  }
\begin{proposition}\label{prop:con-limit}
For any $T>0$, the sequence  $(\mu^n;{n \in \N})$ is tight in $D([0,T], {\cal Z})$ equipped with the Skorokhod $M_1$ topology.
\end{proposition}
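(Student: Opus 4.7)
The proof would follow the general scheme of Theorem 12.12.3 (together with Remark 12.3.2) of \cite{Whitt}, which reduces $M_1$-tightness in $D([0,T], {\cal Z})$ to a compact containment condition plus a vanishing $M_1$-oscillation modulus. The starting point is a hidden monotonicity of the coalescent dynamics that trivialises oscillation control once one works with the right functionals.

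The central observation is that for every fixed $\lambda \in [0,1]$, the one-dimensional projection $t \mapsto \psi_\lambda(\mu^n_t)$ is non-increasing in $t$. Indeed, at a coagulation event in which blocks of sizes $j_1, \dots, j_m$ merge into a single block of size $J := j_1 + \cdots + j_m$, the quantity $n\,\psi_\lambda(\mu^n_t)$ varies by $\lambda^J - \sum_{k=1}^m \lambda^{j_k}$, which is non-positive since $\lambda \in [0,1]$ forces $\lambda^J \leq \lambda^{j_k}$ for every $k$. Since $[0,1]$-valued non-increasing c\`adl\`ag paths have vanishing $M_1$-oscillation modulus, each family $(\psi_\lambda(\mu^n))_n$ is automatically tight in $D([0,T], [0,1])$ for the $M_1$ topology; by componentwise monotonicity, the same holds for any finite-dimensional vector $(\psi_{\lambda_1}(\mu^n), \dots, \psi_{\lambda_K}(\mu^n))$.

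For the compact containment condition, the defining constraint $\sum_i i\,z(i) = 1$ on ${\cal Z}$ yields $|\bz| \leq 1$ together with the uniform tail estimate $\sum_{i > N} z(i) \leq 1/(N+1)$, so ${\cal Z}$ is already relatively compact in $\ell^1(\R_+)$. Thus the trajectories of $\mu^n$ automatically lie in a fixed (norm-)compact subset, and compact containment is free. To conclude, I would pick a countable dense family $(\lambda_k)_{k \in \N} \subset [0,1]$ and combine the $M_1$-tightness of the scalar processes $(\psi_{\lambda_k}(\mu^n))_n$ with the uniform $\ell^1$ tail bound to extract, along any subsequence, a further subsequence converging in $D([0,T], {\cal Z})$; injectivity of $\bz \mapsto (\psi_\lambda(\bz))_\lambda$, which follows from the inversion formula $\bz(i) = (i!)^{-1} \frac{d^i}{d\lambda^i} \psi_\lambda(\bz)|_{\lambda = 0}$, ensures that the limits along different projections are consistent and arise from a genuine ${\cal Z}$-valued path. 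The main obstacle I anticipate is precisely this projective step: one must bound the $M_1$-oscillation of $\mu^n$ with respect to the $\ell^1$-norm by the moduli of finitely many scalar projections $\psi_{\lambda_k}$ plus a controllable $\ell^1$-tail contribution, which is where Remark 12.3.2 of \cite{Whitt} (or a direct tail truncation argument using the relative compactness of ${\cal Z}$) is crucial.
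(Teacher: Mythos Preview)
Your plan shares the paper's skeleton (Whitt Theorem 12.12.3 plus monotonicity) but differs in the choice of monotone functionals and in how the oscillation bound is closed. The paper does \emph{not} project through $\psi_\lambda$; instead it applies the partial-sum map $s(\bz)=(\sum_{i\le K} z(i))_{K\ge1}$, each coordinate of which is non-increasing along the coalescent (a block of size $\le K$ can only be absorbed into a larger one). The virtue of this choice over $\psi_\lambda$ is that $\|s(\bz)\|_{\ell^\infty}=|\bz|$ on $\cZ$, so the $\ell^1$ problem is reduced to an $\ell^\infty$ one without any reconstruction step. Then, rather than arguing that monotone scalars have zero $M_1$-oscillation and lifting, the paper bounds the vector $M_1$-modulus of $s(\mu^n)$ on $[t-\delta,t+\delta]$ by the single scalar increment $|\mu^n_{t_1}|-|\mu^n_{t_3}|$ and controls this via Markov's inequality and the uniform generator bound of Corollary~\ref{cor:ui}: $\E(|\mu^n_{t_1}|-|\mu^n_{t_3}|)\le C\delta$.

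Your route is defensible but strictly harder at the point you yourself flag. The statement ``componentwise monotonicity implies the same for any finite vector $(\psi_{\lambda_1}(\mu^n),\dots,\psi_{\lambda_K}(\mu^n))$'' is not free for the \emph{strong} $M_1$ topology, since the convex-combination parameter in the segment must be common across coordinates; you only get that the vector oscillation is bounded by $\max_k(\psi_{\lambda_k}(\mu^n_{t_1})-\psi_{\lambda_k}(\mu^n_{t_3}))$, and to make this small uniformly in $n$ you will end up invoking exactly the generator estimate (Corollary~\ref{cor:ui}) that the paper uses directly. Moreover, the passage from countably many $\psi_{\lambda_k}$-limits back to a $\cZ$-valued limit via the derivative inversion formula is delicate (differentiation at $\lambda=0$ is not continuous), whereas the partial-sum map is bi-Lipschitz between $(\cZ,\ell^1)$ and $(s(\cZ),\ell^\infty)$ on account of the uniform tail bound you correctly note. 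In short: your compact-containment argument and monotonicity insight are on target, but switching the projection from $\psi_\lambda$ to partial sums collapses the ``projective step'' you identify as the main obstacle and lets a single application of Corollary~\ref{cor:ui} close the proof.
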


\begin{proof}
\blue{Let us define the function $s: \ell^1(\R_+) \to \ell^\infty(\R_+) $ such that 
$$s(x) = \bigg(\sum_{i=1}^K x(i)\bigg)_K.$$
We know that this is a continuous function. 
We consider the process $\bar \mu^n:= s(\mu^n)$. It is sufficient to prove tightness of $\bar\mu^n$.
Observe that every entry of  $\bar \mu^n$ is decreasing.}

We use Theorem 12.12.3 in \cite{Whitt}. Let us define  the supremum norm on ${\cal Z}$ 
\blue{$$ \forall x \in {\cal Z}, \ ||x||:= \sup_{0 \le t \le T} ||x(t)|| =  \sup_{0 \le t \le T}  \max_i |x_t(i)|.   $$}
 We need to check that:
\begin{itemize}
\item[$(i)$]For each $\epsilon >0$, there exists $c$ such that 
$$\forall \ n\ge 1, \  \mathbb{P}(||\bar\mu^n|| >c ) < \epsilon.$$
\item[$(ii)$]For each $\epsilon >0$ and $\eta >0$, there exists $\delta$ such that $$\forall \ n\ge 1, \ \mathbb{P}(w(\bar \mu^n, \delta) \ge \eta)< \epsilon, $$
where $$w(x, \delta):= \sup_{t \in [0, T]}\left \{\sup_{0\vee (t-\delta) \le t_1 < t_2 < t_3 \le (t+\delta)\wedge T )} \{ ||x_{t_2} - [x_{t_1}, x_{t_3}] || \} \right \} $$
 where the segment $ [a, b]$ is defined as $ [a, b] := \{ \alpha a + (1-\alpha)b: 0 \le \alpha \le 1\}$ and the difference between $x_{t_2}$ and $[x_{t_1},x_{t_3}]$ is the smallest difference between $x_{t_2}$ and any point in the segment $[x_{t_1},x_{t_3}]$, \blue{where we use the standard definition of segments in Banach spaces}.
\end{itemize}
The first condition is trivial \blue{(since $\sum_{i=1}^K z(i) <1$ for any $z \in {\cal Z}$)}.
To check the second condition, we first notice that 
\blue{$$ ||\bar \mu^n_t|| =  | \mu^n_t |. $$
 Since every coordinate of $(\bar \mu^n_t; t\geq 0)$ is decreasing, for any $t \in [0, T]$ and any $(t-\delta) \le t_1 < t_2 < t_3 \le (t+\delta)$, 
 \begin{eqnarray*}
 ||\bar \mu^n_{t_2} - [\bar \mu^n_{t_1}, \bar \mu^n_{t_3}] ||  &\le& ||\bar \mu^n_{t_1}|| - ||\bar \mu^n_{t_3}||\\
 &\le& | \mu^n_{t_1}| - | \mu^n_{t_3}|.
\end{eqnarray*} }
Using Markov's inequality, 
\begin{eqnarray*}
\mathbb{P}((|\mu^n_{t_1}| - |\mu^n_{t_3}|) \ge \eta) &\le& \frac1{\eta}\mathbb{E}(|\mu^n_{t_1}| - |\mu^n_{t_3}|)\\
 &=& \frac1{\eta} \sum_{k=1}^\infty \frac{R(k)}{n^{1-\alpha}} \E\left( \int_{t_1}^{t_3}  (\ |\mu_u^n| - \Lambda^{k,n} |\mu_u^n|   ) du \right).
\end{eqnarray*}
Using Corollary \ref{cor:ui} and the fact that $t_3 - t_1\le \delta$, this quantity tends to 0 as $\delta \to 0$ which completes the proof.
\end{proof}

In the following, we denote by $\mu$ any subsequential limit of  $(\mu^n;{n\in \N})$  in $D([0,T], {\cal Z})$.
It remains to prove that $\mu$ is the (unique) solution to the martingale problem.
We start by showing that the limiting process $\mu$ has no fixed point of discontinuity.

\begin{lemma}\label{eq:cont-mapping}
For any $t \in [0, T]$, and $t_p\downarrow t$ \blue{or $t_p\uparrow t$},
$\mu_{ t_p} \Longrightarrow \mu_t$.
\end{lemma}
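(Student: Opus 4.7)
The strategy is to show that any fixed $t$ is almost surely a continuity point of $\mu$, so that $M_1$-convergence delivers the marginal statement $\mu_{t-} \overset{d}{=} \mu_t$. Given that $\mu$ is c\`adl\`ag, the case $t_p \downarrow t$ is immediate from right-continuity, and the real content of the lemma is to rule out fixed discontinuities of $\mu$, i.e.\ to prove $\mu_{t-} \overset{d}{=} \mu_t$ for every $t \in [0,T]$.

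\textbf{Key steps.} First, I would exploit the uniform bound of Corollary~\ref{cor:ui}: for every $f \in \mathcal{T}$, $\sup_n \sup_{\bz_n} |\mathcal{A}_n f(\bz_n)| \leq C_f$. Combined with the martingale property of $f(\mu^n_t) - \int_0^t \mathcal{A}_n f(\mu^n_s)\,ds$, this yields that $t \mapsto \mathbb{E}[f(\mu^n_t)]$ is $C_f$-Lipschitz uniformly in $n$. Next, I would pass this estimate to the limit. Let $D := \{s \in [0,T] : \mathbb{P}(\mu_{s-} \neq \mu_s) > 0\}$; a Fubini argument (any c\`adl\`ag path has a Lebesgue-null jump set) shows that $D$ has Lebesgue measure zero, hence its complement is dense in $[0,T]$. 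At any $s \in [0,T] \setminus D$, $M_1$-convergence gives $\mu^n_s \Rightarrow \mu_s$, and since $f$ is bounded and $\ell^1$-continuous on $\mathcal{Z}$, bounded convergence yields $\mathbb{E}[f(\mu^n_s)] \to \mathbb{E}[f(\mu_s)]$. The limit $g(s) := \mathbb{E}[f(\mu_s)]$ is therefore $C_f$-Lipschitz on the dense set $[0,T] \setminus D$; being also right-continuous on $[0,T]$ (c\`adl\`ag of $\mu$ plus bounded convergence), it extends to a $C_f$-Lipschitz function on all of $[0,T]$.

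From continuity of $g$ one deduces $\mathbb{E}[f(\mu_{t-})] = g(t-) = g(t) = \mathbb{E}[f(\mu_t)]$ for every $t$ and every $f \in \mathcal{T}$. The class $\mathcal{T}$ is measure-determining on $\mathcal{Z}$: iterated derivatives of $\lambda \mapsto \psi_\lambda(\bz)$ at $0$ recover all coordinates $\bz(k)$ (exactly as exploited in Step~2 of the proof of Theorem~\ref{Mgprob}), so smooth functionals of $\psi_{\vec\lambda}$ characterize laws on $\mathcal{Z}$. Hence $\mu_{t-} \overset{d}{=} \mu_t$. Combined with the a.s.\ convergence $\mu_{t_p} \to \mu_{t-}$ for $t_p \uparrow t$ and $\mu_{t_p} \to \mu_t$ for $t_p \downarrow t$ (c\`adl\`ag property), the lemma follows.

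\textbf{Main obstacle.} The delicate points are: (a) justifying that $M_1$-convergence yields marginal weak convergence at continuity points of $\mu$ in the $\ell^1$-topology on $\mathcal{Z}$ (standard Skorokhod $M_1$ theory, to be cited from Whitt); and (b) that the test class $\mathcal{T}$, consisting of smooth functions of finitely many $\psi_\lambda$, is measure-determining on $\mathcal{Z}$, which essentially repeats the analytic inversion argument already appearing in the proof of Theorem~\ref{Mgprob}.
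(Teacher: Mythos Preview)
Your proposal is correct, and the core idea --- using the uniform generator bound of Corollary~\ref{cor:ui} to transfer a Lipschitz estimate on $t\mapsto\E[f(\mu^n_t)]$ to the limit $t\mapsto\E[f(\mu_t)]$ --- is exactly what drives the paper's proof as well. The execution differs: the paper does not work with general $f\in\mathcal T$ but only with the specific monotone functionals $|\mu_t|$ and $\sum_{i=1}^K \mu_t(i)$. Because these are a.s.\ nonincreasing in $t$, continuity of their expectations immediately upgrades to $L^1$-convergence of each coordinate as $t_p\to t$, and the paper then closes with Scheff\'e's lemma to obtain $\ell^1$-convergence, bypassing any measure-determining argument. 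Your route via arbitrary $f\in\mathcal T$ is slightly more abstract: it makes no use of the monotonicity of the process, but in exchange you must invoke the analytic inversion argument (from Step~2 of Theorem~\ref{Mgprob}) to show $\mathcal T$ separates laws on $\mathcal Z$. Both approaches rest on the same key estimate and reach the conclusion with comparable effort; the paper's is more hands-on, yours more in the spirit of standard martingale-problem arguments.
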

\begin{proof}
Since the function $(\nu_t:= \E(|\mu_t|); t\geq0)$ is non-increasing and valued in $[0,1]$,  it has at most a countable set $D$ of discontinuity points.  
We aim at showing that the set $D$ is empty. Let $t\in[0,T]$ and $t_p\downarrow t$. Since $D$ is countable, we can always take a sequence $\bar t_p>t_p$ s.t, $\bar t_p\notin D$ and $\bar t_p\to t$. 
By monotonicity,
$$
0 \leq \nu_t - \nu_{t_p} \leq \nu_t - \nu_{\bar t_p}.
$$
We have 
\begin{eqnarray*}
\E\bigg( |\mu^n_t| - |\mu^n_{ \bar t_p }| \bigg) & = &  \int_t^{ \bar t_p } \sum_{k=1}^\infty \frac{R(k)}{n^{1-\alpha}} \E\left (\ |\mu_u^n| - \Lambda^{k,n} |\mu_u^n|   ) \right) du.
\end{eqnarray*}
We now let $n\to\infty$ (at fixed $p$).
Since $\bar t_p \notin D$, we have $\lim_{n\to\infty} \E\bigg( |\mu^n_t| - |\mu^n_{ \bar t_p }| \bigg) \ = \nu_t - \nu_{\bar t_p}$.
On the other hand, Corollary \ref{cor:ui} implies the existence of a constant $C$ such that 
$$
\lim_{n\to\infty} \E\bigg( |\mu^n_t| - |\mu^n_{ \bar t_p }| \bigg) \leq C (\bar t_p - t).
$$
As a consequence, $\nu_t - \nu_{\bar t_p}\to0$ as $p\to\infty$.

Let us now consider the functions  $(\nu^K_t:= \E(\sum_{i = 1}^K \mu_t(i)); t\geq0)$, which are also non-increasing and valued in $[0,1]$. 
By \eqref{Nm-Cm} in Lemma \ref{lemma-combin},we have  
\[ \ \E\left(\sum_{i = 1}^K \mu_{t}^n(i) - \sum_{i = 1}^K\mu_{\bar t_p}^n(i)\right) \leq  2 K \E\left( |\mu_t^n| - |\mu_{\bar t_p}^n|\right),  \]
and we can apply the same reasoning as above to prove that for every $K\in \N$, $\nu^K_t - \nu^K_{\bar t_p}\to0$ as $p\to\infty$.

This implies  that $\mu_{t_p}$ converges to $\mu_{t}$ in distribution coordinatewise.  
By Sheffe's lemma, $\mu_{t_p}$ converges to $\mu_{t}$ in distribution in  $\ell^1(\R_+)$.

\blue{The proof for $t_p\uparrow t$ follows along the same lines. }
\end{proof}

\begin{proof}[Proof of Theorem \ref{thm:conv-process}]
We need to show that the process $(\mu_t; t\geq0)$ is the (unique) solution to the martingale problem.
Let $f\in{\cal T}$.
Let $p\in\N$. Let $h_1,\dots, h_p$ be continuous and bounded functions from $\ell^1(\R_+)$ to $\R_+$. 
Let $t_1<\dots<t_p\leq t$ and $s\geq0$.  Recall that ${\cal A}_n$ refers to the generator of the rescaled process $\mu^n$, so that  it remains to prove that for such choice of times and test functions, we have
\begin{eqnarray}\label{eq:claim-lim}\lim_{n\to\infty}& \E\left( \bigg(f(\mu^n_{t+s}) - f(\mu^n_{t})  - \int _{t}^{t+s} {\cal A}_n f(\mu^n_u) du \bigg) \prod_{i=1}^p h_i(\mu^n_{t_i})\right) \nonumber\\= & \E\left( \bigg(f(\mu_{t+s}) - f(\mu_{t})  - \int _{t}^{t+s} {\cal A} f(\mu_u) du \bigg) \prod_{i=1}^p h_i(\mu_{t_i})\right).    \end{eqnarray}

Let us now consider a coupling such that $\mu^n$ converges to $\mu$ a.s. in $D([0,T], {\cal Z})$. In virtue of Lemma \ref{eq:cont-mapping}, the times $t, t+s$ and $t_i$'s are a.s. continuity points for the limiting process so that  $\mu^n_{u}\to \mu_u$ for $u\in\{s,s+t,t_1,\cdots, t_k\}$ a.s.. 
Further, by monotonicity \blue{of each coordinate}, the set of discontinuities for the functions $(|\mu_t|; t\geq0)$ and $(\sum_{i=1}^\ell \mu_t(i)); t\geq0)$
is a (random) countable set a.s.. This implies that the set of discontinuity points for the limiting process $(\mu_t; t\geq0)$ has  a.s.  null Lebesgue measure \blue{i.e.,  for every fixed $t$, $\P(\mu \textrm{ is continuous at $t$})=1$.}

Now, in virtue of Corollary \ref{cor:ui}, we can use the bounded convergence theorem (to pass the limit inside $\E$ and the time integral). 
(\ref{eq:claim-lim}) follows from Proposition \ref{prop:cv-law} and the fact that a.s. the set of discontinuities for the limiting process has null Lebesgue measure.
\end{proof}

\section{Self-similarity }
\label{sect:selfsimilarity}
In this section, we show that the limiting process $(\mu_t; t\geq0)$ is a self-similar Markov process. 
The self-similarity property provides a natural Lamperti representation of the process, given in \eqref{eq:l-ch-t}. 
This representation allows to construct the process $\mu$ as the flow induced by a SDE driven by a L\'evy noise, see Theorem \ref{thm:flow-representation}.

Let us first generalize the concept of self-similarity in $\R_+^d$ ( \cite{Kiu, Alili}) to $\ell^1(\R_+)$.
Let $(\nu_t; t\geq0)$ be a process valued in $\ell^1(\R_+)$. Let $\P_{\bz}$ be the law of $\nu$ starting from the initial condition $\bz$.
We say that $\nu$ is self-similar with parameter $\beta$ if for every $\gamma>0$
\[\left((\nu_t; t\geq0), \P_{\bz}\right)  = \left( (\gamma \nu_{t\gamma^{-\beta} }; t\geq0 ), \P_{\gamma^{-1} \bz}\right). \]

\begin{proposition}\label{prop:self-similar}
The process $(\mu_t; t\geq0)$ is a self-similar process with parameter $\beta=\alpha-1$.
\end{proposition}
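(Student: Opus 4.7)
The plan is to use the uniqueness of the martingale problem (Theorem \ref{Mgprob}) together with the scaling invariance of the coagulation operator $\cC^x$ that was already pointed out just before Theorem \ref{thm:lamperti}, namely
\begin{equation*}
\gamma\,\cC^x(\bz) = \cC^{\gamma x}(\gamma\bz),\quad\text{equivalently}\quad \gamma^{-1}\cC^x(\bz) = \cC^{x/\gamma}(\gamma^{-1}\bz),
\end{equation*}
which is immediate from the definition \eqref{defCx} by absorbing the factor $\gamma$ into $\bz$ and $x$. Fix $\gamma>0$ and, assuming $\mu$ starts from $\gamma^{-1}\bz$, define $\tilde\mu_t := \gamma\mu_{t\gamma^{-\beta}}$, so that $\tilde\mu_0 = \bz$. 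I will show that $\tilde\mu$ solves the same martingale problem as $\mu$ started from $\bz$, whence uniqueness gives the self-similarity relation.

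The first step is to check that the test-function class $\cT$ is stable under the dilation $\bz\mapsto\gamma^{-1}\bz$: for $f = F\circ\psi_{\vec\lambda}\in\cT$, linearity of $\psi_\lambda$ gives $g(\bz):=f(\gamma^{-1}\bz) = \tilde F\circ\psi_{\vec\lambda}(\bz)$ with $\tilde F(u_1,\dots,u_K) := F(\gamma^{-1}u_1,\dots,\gamma^{-1}u_K)$, which is still Lipschitz. Then, using the scaling identity,
\begin{equation*}
\cC^x g(\bz) = f\bigl(\gamma^{-1}\cC^x(\bz)\bigr) = f\bigl(\cC^{x/\gamma}(\gamma^{-1}\bz)\bigr) = \cC^{x/\gamma} f(\gamma^{-1}\bz),
\end{equation*}
so that after the change of variable $y = x/\gamma$,
\begin{equation*}
\cA g(\bz) = \int_0^\infty\!\bigl(\cC^{x/\gamma}f(\gamma^{-1}\bz) - f(\gamma^{-1}\bz)\bigr)\rho x^{-\alpha}\,dx = \gamma^{1-\alpha}\,\cA f(\gamma^{-1}\bz) = \gamma^{-\beta}\,\cA f(\gamma^{-1}\bz),
\end{equation*}
since $\beta = \alpha-1$. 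This is precisely the compatibility condition between $\cA$ and the dilation needed for self-similarity of index $\beta$.

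The second step is to transport the martingale property through the time change $t\mapsto t\gamma^{-\beta}$. Applying Theorem \ref{Mgprob} to $\mu$ (started from $\gamma^{-1}\bz$) with the test function $f$ at time $t\gamma^{-\beta}$, and performing the deterministic substitution $s = u\gamma^{-\beta}$ in the compensator, yields that
\begin{equation*}
f(\mu_{t\gamma^{-\beta}}) - \int_0^t \gamma^{-\beta}\,\cA f(\mu_{u\gamma^{-\beta}})\,du = g(\tilde\mu_t) - \int_0^t \cA g(\tilde\mu_u)\,du
\end{equation*}
is a martingale with respect to the time-changed filtration $(\cF_{t\gamma^{-\beta}})_{t\ge 0}$, where the equality uses Step~1 and the definition of $\tilde\mu$. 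Hence $\tilde\mu$ solves the martingale problem \eqref{MP} with initial condition $\bz$. By the uniqueness part of Theorem \ref{Mgprob}, $\tilde\mu$ has the same law as $\mu$ under $\P_{\bz}$, which is exactly the self-similarity of index $\beta$.

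There is no real obstacle here beyond keeping track of the stability of the class $\cT$ under the dilation and the bookkeeping of the time change: once the elementary scaling relation $\gamma\cC^x(\bz) = \cC^{\gamma x}(\gamma\bz)$ is combined with the $x^{-\alpha}$ factor in $\cA$, the exponent $\beta = \alpha-1$ falls out naturally from the Jacobian of the substitution $y = x/\gamma$.
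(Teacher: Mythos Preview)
Your proof is correct and follows essentially the same approach as the paper: both exploit the scaling identity $\gamma\,\cC^x(\bz)=\cC^{\gamma x}(\gamma\bz)$, perform the change of variable in the integral defining $\cA$ to produce the factor $\gamma^{1-\alpha}=\gamma^{-\beta}$, absorb it by the deterministic time change $s\mapsto s\gamma^{-\beta}$, and conclude via uniqueness of the martingale problem. The only cosmetic difference is that the paper packages the dilation into the outer function $F^{(\gamma)}(x)=F(\gamma x)$ rather than into a new test function $g(\bz)=f(\gamma^{-1}\bz)$, and does not spell out the stability of $\cT$ under dilations as explicitly as you do.
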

\begin{proof}
Fix $\gamma>0$ and consider the rescaled process $\mu^{(\gamma)}:= (\gamma \mu_{t\gamma^{-\beta} }; t\geq0 )$. By uniqueness of the solution of  the martingale problem
introduced in Theorem \ref{Mgprob}, it is sufficient to check that $\mu^{(\gamma)}$ is also a solution.
Fix an integer $K$, a function $F\in C^2([0,1]^K)$ and a vector $\vec{\lambda}\in[0,1]^K$. 
Let $F^{(\gamma)}(x) = F(\gamma x)$. Since $(\mu_s; s\geq0)$ is \blue{a} solution of the martingale problem, then
\begin{eqnarray*} M_t  & := &  F^{(\gamma)}( \psi_{\vec\lambda}(\mu_t) ) \ - \int_0^t\int_{\R_+} \bigg(F^{(\gamma)}( \psi_{\vec \lambda}( \cC^{x}(\mu_s) )) -  F^{(\gamma)}( \psi_{\vec\lambda}(\mu_s) )  \bigg)  \rho x^{-\alpha}dx ds\\
& = &  F( \psi_{\vec\lambda}( \gamma \mu_t) ) \ - \int_0^t\int_{\R_+} \bigg(F( \gamma \psi_{\vec \lambda}( \cC^{x}(\mu_s) )) -  F( \psi_{\vec\lambda}(\gamma \mu_s) )  \bigg) \rho x^{-\alpha}dxds
\end{eqnarray*}
is a martingale. Next, we observe that for every $\bz\in\ell^1(\R_+)$
\begin{eqnarray*}
\gamma \cC^{x}( \psi_{\vec\lambda}(\bz)) & = & \cC^{\gamma x} ( \psi_{\vec\lambda}(\gamma\bz)). 
\end{eqnarray*}
This implies  
\[ M_{t \gamma^{-\beta}} \ = \  F( \psi_{\vec\lambda}( \mu_t^{(\gamma)}) ) \ - \int_0^{t \gamma^{-\beta}}\int_{\R_+} \bigg(F(  \psi_{\vec \lambda}( \cC^{\gamma x}(\gamma \mu_s) )) -  F( \psi_{\vec\lambda}(\gamma \mu_s) )  \bigg)  \rho x^{-\alpha}dxds.  \]
Changing the variables $\bar s \gamma^{-\beta} = s$ and $\bar x= \gamma x$ in the latter integral yields 
\[ M_{t \gamma^{-\beta}} \ = \ F( \psi_{\vec\lambda}( \mu_t^{(\gamma)}) ) \ - \underbrace{\gamma^{-\beta + \alpha-1}}_{=1} \int_0^{t}\int_{\R_+} \bigg(F(  \psi_{\vec \lambda}( \cC^{\bar x}( \mu^{(\gamma)}_{\bar s}) )) -  F( \psi_{\vec\lambda}( \mu^{(\gamma)}_{\bar s}) )  \bigg)  \rho\bar x^{-\alpha}d\bar xd\bar s. \] 
This shows that
\[  F( \psi_{\vec\lambda}( \mu_t^{(\gamma)}) ) \ - \int_0^{t}\int_{\R_+} \bigg(F(  \psi_{\vec \lambda}( \cC^{\bar x}( \mu^{(\gamma)}_{\bar s}) )) -  F( \psi_{\vec\lambda}( \mu^{(\gamma)}_{\bar s}) )  \bigg)  \rho\bar x^{-\alpha}d\bar xd\bar s \] 
defines a martingale for every $F\in C^2([0,1]^K)$ so that $\mu^{(\gamma)}$ is also \blue{a} solution of the martingale problem introduced in Theorem \ref{Mgprob}.
\end{proof}

\begin{proof}[Proof of Theorem \ref{thm:lamperti}]
By Proposition \ref{prop:self-similar} the process is self-similar in $\ell^1(\R_+)$. Theorem 2.3 in \cite{Alili} \blue{is analogous to} Theorem \ref{thm:lamperti} for self-similar processes valued in $\R^d$.
The proof goes verbatim for our state space. 
\end{proof}

Now, let us provide an alternative representation of the MAP.
Consider the generating function of $\theta_t$ (the component encoding the asymptotic frequencies \blue{introduced in Theorem \ref{thm:lamperti}}), i.e.,
\begin{equation} \label{defxlambda}
\forall t\geq0, \ \ x_t^{\lambda} := \psi_\lambda(\theta_{t})= \sum_{i\geq1} \lambda^i \theta_t(i).
\end{equation}
For every $\lambda \in [0, 1), \vec\lambda \in [0,1)^K$, define the processes $x^\lambda:=(x^\lambda_t;t\geq0)$ and $(x_t^{\vec{\lambda}}; t\geq 0) := (\psi_{\vec\lambda}(\theta_t); t\geq 0)$.
By construction, we have $x_0^\lambda= \lambda$.
We consider the flow of stochastic processes indexed by $\lambda$, $(x^\lambda; \lambda \in [0,1))$.
 In Theorem \ref{thm:flow-representation} we prove that $((\xi_t, x_t^{\lambda}); t\geq 0)$ has the same law as the flow of stochastic processes $((\bar \xi_t,\bar x_t^{\lambda}); t\geq 0)$ defined in \eqref{eq:flow} below.

Let $N$ be a Ppp in \blue{$\mathbb R_+\times \mathbb R_+$} with intensity measure $dt\otimes \rho a^{-\alpha}da$.  
We define 
\[H(x,a) \ := \ \frac{ \E\bigg(\exp(-\Gamma / a)(\exp( \Gamma x/a)-1)\bigg)}{\E\bigg(1-\exp(-\Gamma / a)\bigg)}, \ \ g(a) \ = \  -\log\E \left[a \bigg(1-\exp(-\Gamma  / a)\bigg)\right], \] 
\blue{where the expected value is taken w.r.t. $\Gamma$.}
For every $\lambda \in[0,1)$, we consider \blue{the unique weak solution to the following stochastic flow}
\begin{eqnarray}\label{eq:flow}
\left \{ \begin{array}{lll}
\bar x_t^\lambda \ &= \ \lambda  \ + &  \ \int_{\R_+^*}\int_0^t  \left[H (\bar x_{s-}^\lambda, a) - \bar x_{s-}^\lambda\right]N(ds \times da)\\
\bar \xi_t \ &= \    & \ \int_{\R^*_+}\int_0^t  g(a) N(ds \times da).
\end{array} \right.
\end{eqnarray}
For every $\vec \lambda \in [0, 1)^K$, define the projected process $(\bar x_t^{\vec{\lambda}}; t\geq 0) := ((\bar x_t^{{\lambda}_1}, \dots, \bar x_t^{{\lambda}_K}); t\geq 0)$.
Note that since $\E(\Gamma) = 1$, $(\bar \xi_t; t\geq0)$ is a spectrally positive subordinator. 
\blue{The existence and uniqueness of such a weak solution is proved along the exact same lines as in \cite{BertoinLeGall2} Theorem 2, where they considered qualitatively similar stochastic flows driven by a Levy noise.}

\begin{theorem}\label{thm:flow-representation}
Let $K\geq2$. For every $\vec{\lambda}' \in [0,1)^{K-1}$,  let $\left(( \xi_t, x^{\vec{\lambda'}}_t); t\geq0 \right)$  be defined as in Theorem \ref{thm:lamperti} and \eqref{defxlambda} and  let $\left(( \bar \xi_t, \bar x_t^{\vec{\lambda}'}); t\geq0 \right)$ be \blue{the unique weak solution to} the stochastic flow defined in \eqref{eq:flow}.
Then
\[\left((\xi_t, x^{\vec{\lambda}'}_t); t\geq0 \right)  \ = \ \left(( \bar \xi_t, \bar x^{\vec{\lambda}'}_t); t\geq0 \right) \ \ \mbox{in law}.\]
\end{theorem}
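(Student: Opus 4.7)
The strategy is to show that $((\xi_t, x_t^{\vec\lambda'}); t\geq 0)$ and $((\bar\xi_t, \bar x_t^{\vec\lambda'}); t\geq 0)$ share the same infinitesimal generator, hence solve the same martingale problem, and then to invoke the claimed uniqueness of weak solutions to \eqref{eq:flow}. Two structural identities drive the computation: the self-similarity $\cC^{\gamma x}(\gamma\bz) = \gamma\,\cC^x(\bz)$ (immediate from \eqref{defCx}) and the closed-form expression \eqref{eq:cha} for $\psi_\lambda\circ\cC^x$.

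I would first compute the generator $\tilde{\cal A}$ of the time-changed process $\tilde\mu_t:=\mu_{A_t}$. The classical Markov-process time-change formula, applied with $dA_t/dt = |\mu_{A_t}|^\beta$, yields $\tilde{\cal A}f(\tilde\mu) = |\tilde\mu|^\beta\,{\cal A}f(\tilde\mu)$. Writing $\tilde\mu = e^{-\xi}\theta$ with $|\theta|=1$ and substituting $a = x|\tilde\mu|$ in \eqref{genA}, the self-similarity gives $\cC^x(\tilde\mu) = |\tilde\mu|\,\cC^a(\theta)$ and the Jacobian is $dx = |\tilde\mu|^{-1}da$, so the prefactors combine as $|\tilde\mu|^\beta\cdot|\tilde\mu|^{-\alpha}\cdot|\tilde\mu| = |\tilde\mu|^{\beta+1-\alpha} = 1$ since $\beta=\alpha-1$. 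Hence
$$
\tilde{\cal A}f(\tilde\mu) = \int_0^\infty \bigl(f(|\tilde\mu|\,\cC^a(\theta)) - f(|\tilde\mu|\,\theta)\bigr)\,\rho a^{-\alpha}\,da,
$$
revealing a state-independent Poisson intensity $\rho a^{-\alpha}\,da$ in Lamperti time.

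Next I would read off the jump sizes in the $(\xi, x^{\vec\lambda'})$ coordinates. Applying \eqref{eq:cha} at $\bz=\theta$ with $\lambda=1$ (using $\psi_1(\theta)=|\theta|=1$) gives $|\cC^a(\theta)| = a\,\E[1-e^{-\Gamma/a}]$, so that at a jump with parameter $a$ the mass $|\tilde\mu|$ is multiplied by $|\cC^a(\theta)|$ and $\xi = -\log|\tilde\mu|$ jumps by exactly $g(a)$, independently of $\theta$. For $\lambda \in [0,1)$, dividing \eqref{eq:cha} at $\bz=\theta$ by $|\cC^a(\theta)|$ yields
$$
\frac{\psi_\lambda(\cC^a(\theta))}{|\cC^a(\theta)|} = \frac{\E\bigl[e^{-\Gamma/a}(e^{\psi_\lambda(\theta)\Gamma/a}-1)\bigr]}{\E\bigl[1-e^{-\Gamma/a}\bigr]} = H(\psi_\lambda(\theta), a),
$$
so each coordinate of $x^{\vec\lambda'}$ jumps to $H(x^{\lambda'_i}, a)$. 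Consequently, the generator of $(\xi_t, x^{\vec\lambda'}_t)$ on appropriate test functions acts as
$$
G \mapsto \int_0^\infty\bigl[G(\xi+g(a),\, H(x^{\vec\lambda'}, a)) - G(\xi, x^{\vec\lambda'})\bigr]\,\rho a^{-\alpha}\,da,
$$
which is precisely the generator of the SDE \eqref{eq:flow} driven by $N$. The claimed uniqueness of its weak solution then yields equality in law.

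The main obstacle is to make these manipulations rigorous at the level of the martingale problem, since $\cal T$ does not literally include functions of $(\xi,\theta) = (-\log|\mu|, \mu/|\mu|)$. A natural route is to work with test functions $f(\mu) = H\circ\psi_{(1,\vec\lambda')}(\mu) = H(|\mu|, \psi_{\vec\lambda'}(\mu))$ for Lipschitz $H$ (which lie in $\cal T$ with $\lambda_1=1$), derive the joint martingale problem for $(|\mu|, \psi_{\vec\lambda'}(\mu))$, and then pass to $(\xi, x^{\vec\lambda'})$ via the smooth change of variables $(m,v)\mapsto(-\log m, v/m)$, controlling errors using the bounds in Lemma \ref{lem:endo}. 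An alternative is to represent $\mu$ as the solution of a jump SDE driven by a Ppp of intensity $dt\otimes\rho x^{-\alpha}dx$ (well posed because the jump map is summable against this intensity by Lemma \ref{lem:endo}) and to perform the state-dependent substitution $a = x|\tilde\mu_{s-}|$ on the driving noise via the Poisson mapping theorem.
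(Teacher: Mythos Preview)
Your proposal is correct and uses the same computational core as the paper---self-similarity to absorb the state-dependent prefactor via a substitution in the $x$-integral, and the closed form \eqref{eq:cha} to identify the jump map as $(g(a),H(\cdot,a))$---but you run the argument in the opposite direction. The paper starts from the SDE side: it forms $\bar w_t^{\vec\lambda}:=(e^{-\bar\xi_t},e^{-\bar\xi_t}\bar x_t^{\vec\lambda'})$, applies It\^o's formula to obtain a martingale with generator ${\cal D}$, time-changes by the \emph{inverse} Lamperti clock $\bar\tau_t$ (optional stopping), performs the substitution $\bar a=a\,e^{\bar\xi_{\bar\tau_u}}$, and lands on the generator ${\cal B}$ of $y^{\vec\lambda}=\psi_{(1,\vec\lambda')}(\mu)$, concluding by the uniqueness of the ${\cal B}$-martingale problem already established in Step~1 of Theorem~\ref{Mgprob}. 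Your route instead goes $\mu\to(\xi,x^{\vec\lambda'})$ and invokes uniqueness of \eqref{eq:flow}. The paper's direction buys a cleaner justification: It\^o's formula plus optional stopping are standard, and the uniqueness it appeals to is already in hand, so there is no need to wrestle with the generator of a time-changed jump process or with extending ${\cal T}$. Your direction is fine too, and your ``obstacle'' paragraph correctly identifies the extra work; your first suggested workaround (pass through $f=H\circ\psi_{(1,\vec\lambda')}$ and the ${\cal B}$-problem) is essentially the paper's move.

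One minor slip: you write the substitution as $a=x|\tilde\mu|$, but the identity $\cC^x(\tilde\mu)=|\tilde\mu|\,\cC^a(\theta)$ together with your Jacobian bookkeeping $|\tilde\mu|^{\beta}\cdot|\tilde\mu|^{-\alpha}\cdot|\tilde\mu|=1$ correspond to $a=x/|\tilde\mu|$ (equivalently $x=a|\tilde\mu|$), which is also the substitution the paper uses. The conclusion is unaffected.
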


\blue{This result allows us to identify the jump measure of the subordinator $\xi_t$ as follows. Consider the measure on $\R_+$ defined by $\rho x^{-\alpha} dx$. The jump measure of the subordinator is the pushforward of this measure by the function $g$ (as defined above). This is a direct consequence of Theorem \ref{thm:flow-representation}.}

\begin{proof}
Let $\vec{\lambda} = (1, \vec{\lambda}') \in \{1\}\times[0,1)^{K-1}$.
Let $(\mu_t; t\geq0)$ be a solution of the martingale problem defined in \eqref{MP}. Recall the definition of 
$$y_t^{\vec{\lambda}} := \psi_{\vec\lambda}(\mu_t) = (|\mu_t|, \psi_{\vec{\lambda}'}(\mu_t))$$ 
(as in the proof of Theorem \ref{Mgprob}). For every $\vec y\in[0,1]^K$, recall the definition of 
\begin{eqnarray*}  
 {\cal B} F(\vec y) 
 & := & \int_0^\infty \bigg(F\left( \E(x\exp(- \vec{y}_1 \Gamma/x) (\exp( \vec{y} \Gamma/x)-1 ))\right) - F(\vec{y}) \bigg) \rho x^{-\alpha}dx,
 \end{eqnarray*}
 where the expected value is taken w.r.t. $\Gamma$. As in the proof of Theorem \ref{Mgprob}, $y^{\vec{\lambda}}$
 is the unique solution of the martingale problem
\begin{equation*} \left( F(y^{ \vec{\lambda}}_t) - \int_0^t {\mathcal B} \ F(y^ {\vec{\lambda}}_s) ds;  t\geq0\right) \ \ \ \mbox{is a martingale for every  $F\in C^2([0,1]^{K})$.} 
\end{equation*}

\medskip

Define the process $(\bar w^{\vec \lambda}_t; t\geq0):=(\exp(-\bar \xi_{t}), \exp(-\bar \xi_{t})\bar x_{t}^{\vec{\lambda'}}; t\geq0 )$. 
Let $(\bar \tau_t; t \geq0)$ be the Lamperti change of time in (\ref{eq:l-ch-t}) defined w.r.t. $\bar \xi$. Since $\bar \tau$ is the inverse time change defined  in Theorem \ref{thm:lamperti},
we need to prove that $(\bar w^{\vec \lambda}_{\bar \tau_t}; t\geq0) = (y^{\vec \lambda}_t; t\geq0)$ in law. The strategy consists  in showing that the time changed process $w_{\bar \tau_t}^{\vec{\lambda}}$
solves the same martingale problem.

For every $\vec w = (w_1, \dots, w_K) \in [0,1]^K$, define
\begin{eqnarray*}  
 {\cal D} F(\vec w) 
 & := & \int_0^\infty \bigg(F(w_1 \exp(-g(a))\bar H(\vec{w}, a)) - F(\vec{w})\bigg) \rho a^{-\alpha}da,
 \end{eqnarray*}
where \blue{$\bar H(\vec {w}, a) := (1, H(w_2/w_1, a),\dots, H(w_K/w_1, a)) $}.
From the definition of the stochastic flow \eqref{eq:flow}, at every jump time $(t, a)$, we have the following transitions
\[\bar \xi_{t^+}  = \bar \xi_{t^-} + g(a), \ \ \bar x^{\lambda}_{t^+} =  H(\bar x_{t^-}^{\lambda}, a).\] 
\blue{By applying Ito's formula in the discontinuous case (see \cite{Meyer})}
\begin{equation*} \left( F(\bar w^{ \vec{\lambda}}_t) - \int_0^t {\mathcal D} \ F(\bar w^ {\vec{\lambda}}_s) ds;  t\geq0\right) \ \ \ \mbox{is a martingale for every  $F\in C^2([0,1]^{2})$.} 
\end{equation*}  

 Since ${\bar \tau}_t$ is a stopping time,  the time changed process
\begin{equation}\label{eq:11}
\bigg (F(\bar w_{{\bar \tau}_t}^{\vec\lambda})  \ -  \ \int_0^{{\bar \tau}_t} {\mathcal D} \ F(\bar w^ {\vec{\lambda}}_s) ds , t\geq 0\bigg) 
\end{equation}
is also a martingale \blue{with respect to the time-changed filtration $\bar{\mathcal{G}}_{\tau_t}$, where $\bar{\mathcal{G}}_t$ is the filtration of the original weak solution}.
We now make the change of variable $s=\ \bar \tau_u= \inf\{t>0  :  \int_0^t \exp(\beta \xi_v) dv>u\}$ in the latter integral. Since
$u=\int_0^{s} \exp((\alpha-1) \bar \xi_v) dv$, we have
\begin{equation}\label{lampertitc}
\exp(- (\alpha-1)  \bar \xi_{\bar \tau_u}) du =  ds. \end{equation}
 Thus, we get that
\begin{equation*}
\int_0^{{\bar \tau}_t} {\mathcal D}  F(\bar w^ {\vec{\lambda}}_s) ds =
\int_0^{t}\int_0^\infty\exp(- (\alpha-1)\bar \xi_{\bar \tau_u} )   \bigg(F\left( e^{- \bar \xi_{\bar \tau_u}  -g(a)} \bar H( \bar w^{\vec \lambda}_{\bar \tau_u} ,a) \right)  -  F( \bar w_{\bar \tau_u}^{\vec\lambda}  ) \bigg) \rho a^{-\alpha}dad u .
 \end{equation*} 
Making the change of variables $\bar a= a/ e^{-\bar \xi_{\tau_s}}, s =u$, we get that
\begin{eqnarray*} 
\int_0^{{\bar \tau}_t} {\mathcal D} F(\bar w^ {\vec{\lambda}}_s) ds
&=& \int_0^{t}\int_0^\infty \bigg(F\left(e^{- \bar\xi_{\bar \tau_u} -g(\bar a e^{\bar \xi_{\bar\tau_u}} )}\bar H(\bar w^\lambda_{\bar\tau_u}, \bar a e^{\bar \xi_{\bar\tau_u}})  \right) - F\left(\bar w^{\vec\lambda}_{\bar\tau_u}\right) \bigg) \rho \bar a^{-\alpha} d\bar a du \\
&=&   \int_0^{t}\int_0^\infty \bigg(F\left( \E\left(a\exp(-\frac{e^{-\bar \xi_{\bar \tau_u}}\Gamma}{a}) ( \exp(\frac{\bar w^{\vec\lambda}_{\bar\tau_u}\Gamma}{a})-1 )\right) \right) - F\left(\bar w^{\vec\lambda}_{\bar\tau_u}\right) \bigg) \rho a^{-\alpha} da du \\
&=& \int_0^{t}  {\mathcal B} \ F(\bar w^ {\vec{\lambda}}_{\bar\tau_u}) du,
\end{eqnarray*} 
\blue{where the second equality is obtained by using the definitions of $\bar H$ and $g$.}
This completes the proof of the proposition since (\ref{eq:11}) is a martingale.
\end{proof}

\section{Site frequency spectrum : Proof of Theorem \ref{thm:cv-sfs}  }
\label{sect-SFS}
In this section, we use the Poissonian construction of a general Dirichlet coalescent. 
Namely, we consider a Ppp on $\R_+\times \N$ with intensity measure $dt \otimes \sum_{k\in\N} R(k)\delta_k$. 
For $k\geq1$, an atom $(t,k)$ of the point process corresponds to an event with $k$ urns in the paintbox construction. It will be referred to as a $k$-event. We recall the definition of the non-rescaled process $(\hat \mu_t^n;t\geq0)$, where $\hat \mu_t^n(i)$ counts the number of blocks of size $i$ at time $t$ and  $ \mu_t^n = \frac1n \hat \mu_{tn^{\alpha-1}}^n$ in \eqref{muresc}.
As a consequence of Theorem \ref{thm:conv-process}, for any $T>0$,
\[ (\mu_t^n; t\geq0) \ \Longrightarrow  \ ( \mu_t; t\geq0) \ \textrm{in $D([0,T], {\cal Z} ).$}\] 
(In particular, $(|\mu_t^n|;t\ge0)$ converges to the total mass of the limiting process $(|\mu_t|;t\ge0)$.) By the Skorokhod representation theorem (Theorem 6.7 in \cite{Billingsley}), we assume without loss of generality that  the convergence holds almost surely. Under this coupling, we aim at proving the following result.
\begin{proposition}\label{thm:11}
Let $\hat {\bf T}_n=\inf\{t >0 \ : \  |\hat \mu^n_t|=1\}$  be the time to the {MRCA} of the population and let $ {\bf T}_n=n^{1-\alpha} \hat{\bf T}_n$ be its renormalized version.
As $n\to\infty$, 
\begin{equation*}
\int_0^{ {\bf T}_n } \mu_s^n  ds \ \Longrightarrow \ \int_0^{\infty} \mu_s   ds \ \ \ \mbox{in $\ell^1(\R^+)$.} 
\end{equation*}
\end{proposition}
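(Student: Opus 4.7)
The strategy is to fix a truncation level $T>0$, split
\[
\int_0^{{\bf T}_n}\mu^n_s\,ds \;=\; \int_0^{T\wedge{\bf T}_n}\mu^n_s\,ds \;+\; \int_{T\wedge{\bf T}_n}^{{\bf T}_n}\mu^n_s\,ds,
\]
handle the bulk piece on $[0,T\wedge{\bf T}_n]$ using the a.s.\ Skorokhod coupling $\mu^n\to\mu$ in $D([0,T],{\cal Z})$, and dominate the two tails (for the prelimit and the limit) separately and uniformly in $n$. A triangle inequality comparing to $\int_0^\infty\mu_s\,ds$, combined with the order of limits $n\to\infty$ followed by $T\to\infty$, will then conclude.

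For the bulk piece, $M_1$-convergence together with Lemma~\ref{eq:cont-mapping} (no fixed discontinuities for $\mu$) yields $\mu^n_s(i)\to\mu_s(i)$ Lebesgue-a.e.\ on $[0,T]$ for every $i$, as well as $|\mu^n_s|\to|\mu_s|$. The uniform bounds $\mu^n_s(i)\le 1/i\le 1$ and $|\mu^n_s|\le 1$ (both consequences of $\mu^n_s\in{\cal Z}$) together with bounded convergence give coordinatewise and total-mass convergence of the integrals on $[0,T]$; Scheff\'e's lemma for non-negative sequences upgrades this to convergence in $\ell^1(\R_+)$ a.s. Since $|\mu_T|=\exp(-\xi_{\tau_T})>0$ a.s., one has ${\bf T}_n\ge T$ for $n$ large enough (recall $|\mu^n_{{\bf T}_n}|=1/n$), hence $\int_0^{T\wedge{\bf T}_n}\mu^n_s\,ds\to\int_0^T\mu_s\,ds$ in $\ell^1$ a.s.

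For the tail of the limit, the Lamperti representation \eqref{eq:l-ch-t} together with the change of variables $s=\tau_u$, $ds=e^{\beta\xi_u}du$, gives
\[
\int_0^\infty|\mu_s|\,ds \;=\; \int_0^\infty e^{(\alpha-2)\xi_u}\,du,
\]
which is the exponential functional of the subordinator $\xi$ and hence a.s.\ finite (see \cite{BY}). Therefore $\|\int_T^\infty\mu_s\,ds\|_{\ell^1}\to 0$ a.s.\ as $T\to\infty$.

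The main obstacle is the uniform control of the prelimit tail $\int_T^{{\bf T}_n}\mu^n_s\,ds$. The plan is to establish a scaling estimate
\[
\E_\bz\Bigl[\int_0^{{\bf T}_n}|\mu^n_s|\,ds\Bigr] \;\le\; C|\bz|^\alpha
\]
for some constant $C$ independent of $n$ and $\bz\in{\cal Z}_n$. For the limit process such a bound is immediate from self-similarity (Proposition~\ref{prop:self-similar}): the identity $(\mu_t)\stackrel{\mathrm{law}}{=}(|\bz|\,\mu_{t|\bz|^{-\beta}})$ under $\P_{\bz/|\bz|}$, combined with the change of variables $u=t|\bz|^{-\beta}$, gives $\E_\bz[\int_0^\infty|\mu_s|\,ds]=|\bz|^{\alpha}\E_{\bz/|\bz|}[\int_0^\infty|\mu_s|\,ds]$, with the right-hand expectation uniformly bounded for $\bz/|\bz|$ on the unit sphere (by the finiteness of the exponential functional established above). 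For the prelimit, the analogous inequality can be derived directly from the generator \eqref{generator-discrete} using the first-moment estimates of Section~\ref{sect-momentestimates} and the moment bounds of Proposition~\ref{prop:moments}, or alternatively transferred from the limit via Corollary~\ref{cor:ui} together with a tightness/uniform-integrability argument. Applying the strong Markov property at time $T$, combined with $|\mu^n_T|\to|\mu_T|\to 0$ a.s.\ as $T\to\infty$, this scaling estimate yields
\[
\lim_{T\to\infty}\limsup_{n\to\infty}\P\Bigl(\Bigl\|\int_T^{{\bf T}_n}\mu^n_s\,ds\Bigr\|_{\ell^1}>\eta\Bigr)=0
\]
for every $\eta>0$, which closes the argument.
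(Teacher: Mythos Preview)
Your overall architecture---truncate at $T$, handle the bulk via the Skorokhod coupling, control the two tails separately, then let $T\to\infty$---is exactly the paper's. The bulk piece (your bounded-convergence/Scheff\'e argument versus the paper's monotonicity/Riemann-sum sandwich) and the limit-side tail (your exponential-functional argument versus the paper's Fatou route from the prelimit bound) are both fine and differ only in detail.

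The genuine gap is the scaling estimate $\E_\bz\bigl[\int_0^{{\bf T}_n}|\mu^n_s|\,ds\bigr]\le C|\bz|^\alpha$ for the \emph{prelimit}. Neither of your suggested derivations works as stated. The estimates of Section~\ref{sect-momentestimates} and Corollary~\ref{cor:ui} are \emph{upper} bounds on the generator and on rates of change; they control finite-horizon increments but say nothing about the integral up to the random absorption time ${\bf T}_n$. What is needed is a \emph{lower} bound on the speed of descent, uniform in $n$. Transferring the bound from the limit via tightness/uniform integrability would require knowing that the family $\bigl(\int_0^{{\bf T}_n}|\mu^n_s|\,ds\bigr)_n$ is uniformly integrable, which is precisely (a strengthening of) what you are trying to prove, so that route is circular.

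The paper fills this gap with Lemma~\ref{lem:bd}, proved by a direct stopping-time construction that exploits the coalescent structure: one waits for successive $k$-events with $k\le n/2^j$ (times $\tau_j$), uses the deterministic bound $|\hat\mu^n_t|\le n/2^j$ for $t>\tau_j$, and uses the tail assumption $R(k)\sim\rho k^{-\alpha}$ to bound $\E(\tau_j-\tau_{j-1})\le C(2^j/n)^{1-\alpha}$. Summing the resulting geometric series yields $\E\bigl[\int_0^{\hat{\bf T}_j}|\hat\mu^j_s|\,ds\bigr]\le Cj^\alpha$, which after normalisation is exactly your scaling bound. The paper then applies the Markov property at time $A$ together with this bound---just as you propose---to kill the prelimit tail. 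Without Lemma~\ref{lem:bd} or an equivalent, your proof is incomplete.
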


The latter  mostly relies on the following technical lemma.

\begin{lemma}\label{lem:bd}
We have that
\[\limsup_{n} \E(\int_0^{{\bf T}_n}|\mu^n_s| ds) <\infty. \]
\end{lemma}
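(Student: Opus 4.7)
I will apply the martingale problem (Theorem \ref{Mgprob}) to the test function $f=\psi_1=|\cdot|\in\mathcal{T}$ (take $K=1$, $\lambda_1=1$, $F(x)=x$). Set $b_n(t):=\E[|\mu^n_t|]$, so $b_n(0)=1$. Since the single-block configuration is absorbing and $\mathcal{A}_n\psi_1\equiv 0$ there, $b_n(\infty)=1/n$, and integrating the (nonpositive, uniformly bounded by Corollary \ref{cor:ui}) integrand gives
\begin{equation*}
\E\!\left[\int_0^{{\bf T}_n}\bigl(-\mathcal{A}_n\psi_1\bigr)(\mu^n_s)\,ds\right]=1-\tfrac{1}{n}.
\end{equation*}

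The \emph{main technical obstacle} is the following uniform generator lower bound: there exists $c>0$ independent of $n$ such that, for every $\bz\in\mathcal{Z}_n$ with $|\bz|\geq 2/n$,
\begin{equation*}
-\mathcal{A}_n\psi_1(\bz)\geq c\,|\bz|^{2-\alpha}.
\end{equation*}
Setting $N:=n|\bz|\geq 2$, a direct unpacking yields $-\mathcal{A}_n\psi_1(\bz)=n^{\alpha-2}\sum_{k\geq 1}R(k)\,f_k(N)$, with $f_k(N):=N-k+k\,\E[(1-p^{(k)}_1)^{N}]$ the expected number of coalescing blocks in a $k$-event starting from $N$ blocks. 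Jensen's inequality applied to the convex map $p\mapsto(1-p)^N$ gives $f_k(N)\geq N-k+k(1-1/k)^N$, which is of order $N$ when $k\in[N/2,N]$; combined with $R(k)\sim\rho k^{-\alpha}$ (Assumptions \ref{def:finite-xi}), summing over this window yields $\sum_k R(k)f_k(N)\gtrsim N\cdot N\cdot N^{-\alpha}=N^{2-\alpha}$ for $N$ large, while for bounded $N\geq 2$ strict positivity of $\sum_k R(k)f_k(N)$ closes the bound after taking a minimum over a finite index set.

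Granted this, applying Jensen to $x\mapsto x^{2-\alpha}$ conditionally on the event $\{t<{\bf T}_n\}$ (and using $|\mu^n_t|^{2-\alpha}\geq(|\mu^n_t|-1/n)^{2-\alpha}$ together with $\P(A)^{\alpha-1}\geq 1$) produces the differential inequality
\begin{equation*}
v_n'(t)\leq -c\,v_n(t)^{2-\alpha},\qquad v_n(t):=b_n(t)-\tfrac{1}{n},\quad v_n(0)\leq 1,
\end{equation*}
whose explicit solution is dominated by $(c(1-\alpha)t)^{-1/(1-\alpha)}\wedge 1$. This is integrable on $(0,\infty)$ since $1/(1-\alpha)>1$, so $\int_0^\infty v_n(t)\,dt\leq K_1$ uniformly in $n$.

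Finally, since $|\mu^n_t|-1/n\geq 1/n$ on $\{t<{\bf T}_n\}$, $\P({\bf T}_n>t)\leq nv_n(t)$; integrating this tail (splitting at the threshold $t^\star\sim n^{1-\alpha}$) gives $\E[{\bf T}_n]=O(n^{1-\alpha})$, so $\tfrac{1}{n}\E[{\bf T}_n]=O(n^{-\alpha})$ is uniformly bounded. Combining with the identity
\begin{equation*}
\E\!\left[\int_0^{{\bf T}_n}|\mu^n_s|\,ds\right]=\int_0^\infty v_n(t)\,dt+\tfrac{1}{n}\E[{\bf T}_n],
\end{equation*}
both terms are uniformly bounded in $n$, which concludes the proof.
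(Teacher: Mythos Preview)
Your proof is correct and takes a genuinely different route from the paper's.

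The paper argues pathwise via the Poisson construction: it introduces the stopping times $\tau_j=\inf\{t>\tau_{j-1}:\text{a $k$-event with }k\le n/2^j\text{ occurs}\}$, uses the structural fact that a $k$-event leaves at most $k$ blocks (so $|\hat\mu_t^n|\le n/2^j$ for $t>\tau_j$), computes $\E[\tau_j-\tau_{j-1}]\le C(2^j/n)^{1-\alpha}$ from the exponential waiting law and the tail of $R$, and sums the resulting geometric-type series to get $\E[\int_0^{\tau_{j_0}}|\hat\mu_t^n|\,dt]\le 2Cn^\alpha$. The residual piece after $\tau_{j_0}$ (at most $\ell_0:=\min\{k:R(k)>0\}$ lineages left) is handled by a crude coupling with a geometric number of i.i.d.\ exponentials.

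Your argument is analytic/Lyapunov: you establish the pointwise generator bound $-\mathcal{A}_n\psi_1(\bz)\ge c|\bz|^{2-\alpha}$ (Jensen on $p\mapsto(1-p)^N$ plus the window $k\in[N/2,N]$ and $R(k)\sim\rho k^{-\alpha}$), then upgrade it to the differential inequality $v_n'\le -c\,v_n^{2-\alpha}$ via conditional Jensen on $x\mapsto x^{2-\alpha}$ together with the neat observation $\P(A)^{\alpha-1}\ge1$, and finally integrate the explicit supersolution $(1+c(1-\alpha)t)^{-1/(1-\alpha)}$.

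What each buys: the paper's proof is more elementary (no ODEs or generator computations) and makes direct use of the paintbox feature that a $k$-event caps the block count at $k$. Your approach is more systematic and, as a bonus, delivers the quantitative decay $\E[|\mu_t^n|]\lesssim (1+t)^{-1/(1-\alpha)}$ uniformly in $n$, i.e.\ a mean speed-of-descent estimate that is not explicit in the paper's argument. Both routes rely only on Assumptions~\ref{def:finite-xi}; in particular your small-$N$ closure uses that $\sum_k R(k)f_k(2)=\sum_k R(k)k\,\E[(p_1^{(k)})^2]$ is the (finite, positive) pair-coalescence rate, so no extra hypotheses are needed.
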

\begin{proof}
Set $\tau^n_0=0$ and define inductively 
\[\forall j\ge1, \  \tau^n_j = \inf\{t>\tau_{j-1}^n \ : \ \mbox{ a $k$-event occurs with $k\leq \frac{n}{2^j} $ } \}.\]
To simplify the notation, we drop the dependence in $n$ and write  $\tau_j^n\equiv \tau_j$.
By the strong Markov property, $\tau_j-\tau_{j-1}$ is an exponential random variable with parameter
$\sum_{k\leq\frac{n}{2^j}} R(k)$. 
Recall that  $\lim_{k\to\infty} k^{\alpha} R(k) = \rho$.
The limiting behavior of $R$ implies that
\[ \lim_{\ell\to\infty}  \frac{1}{\ell^{1-\alpha}}\sum_{k\leq \ell} R(k)  \ = \int_0^1 \frac{\rho dx}{x^\alpha} = \frac{\rho}{1-\alpha}.  \]
 Let $\ell_0=\inf\{k\geq0: \ R(k)> 0\}$ .
Since $\sum_{k\leq \ell} R(k)>0$ for every $\ell\geq \ell_0$, 
there exists $C>0$ such that for every $\ell\geq \ell_0$ 
\[ \sum_{k\leq\ell} R(k)  \ \geq  \ C^{-1} \ell^{1-\alpha}.  \]
Define
$j_0^n \equiv j_0 = [\log_2(\frac{n}{\ell_0})]$. For every $j\leq j_0$, we have $n/2^j \geq \ell_0$ so that 
\[\sum_{k\leq\frac{n}{2^j}} R(k) \geq C^{-1}\left(\frac{n}{2^j} \right)^{1-\alpha} \ \ 
\mbox{and thus $\E\left(\tau_j-\tau_{j-1}\right) \leq C \left(\frac{2^j}{n} \right)^{1-\alpha}$}. \]
By construction, $|\hat \mu^n_t|\leq \frac{n}{2^j}$ for every $t>\tau_j$.
As a consequence, for every $j \leq j_0$, we have
\begin{eqnarray} \label{eq:bound-int-mut}
\E\left(\int_0^{\tau_j} |\hat \mu^n_t| \right) & \leq  & \sum_{i=1}^{j} \E(\tau_{i} -\tau_{i-1}) \frac{n}{2^{i-1}}\nonumber \\
& \leq  & C \sum_{i=1}^j \left(\frac{n}{2^{i-1}}\right)^\alpha\nonumber \\
& \leq & 2 C n^{\alpha}.
\end{eqnarray}

Next, we use the following change of variables
\begin{eqnarray}
\frac{1}{n^{\alpha}} \int_0^{\hat {\bf T}_n} |\hat \mu^n_s| ds  =  \frac{1}{n^{\alpha}} \int_0^{n^{1-\alpha} \hat {\bf T}_n } | \hat \mu^n_{\bar s/n^{1-\alpha}}| \frac1{n^{1-\alpha}}d\bar s   =   \int_0^{{\bf T}_n} |\mu^n_{\bar s}| d\bar s.
\label{change-var}
\end{eqnarray}

Using the fact that there are at most $\ell_0$ lineages remaining at time $\tau_{j_0}$, we have
\begin{eqnarray*}
\E\left(\int_0^{{\bf T}_n} |\mu_t^n| dt \right) & =  & \frac{1}{n^{\alpha}} \E\left(\int_0^{\hat{\bf T}_n}  |\hat \mu^n_t| dt\right) \nonumber\\
& \leq & \frac{1}{n^{\alpha}} \E\left(\int_0^{\tau_{j_0}}  |\hat \mu^n_t| dt\right) \ + \ \frac{1}{n^{\alpha}}\E\left(\int_{\tau_{j_0}}^{\hat {\bf T}_n} |\hat \mu^n_t| dt\right) \nonumber\\
& \leq & 2C \ +\ \frac{1}{n^{\alpha}}  \E\left( \int_0^{\hat {\bf T}_{\ell_0}} |\hat \mu^{\ell_0}_t| dt\right )\nonumber \\
& \leq & 2C \ +\ \frac{\ell_0^\alpha}{n^{\alpha}}  \E\left( \hat {\bf T}_{\ell_0} \right),
\end{eqnarray*}
where in the last line we used \eqref{eq:bound-int-mut}.
It remains to show that the expectation on the RHS is finite. In order to see that, we note that successive $\ell_0$-events 
are separated by independent exponential r.v.'s with the same parameter $R(\ell_0)>0$. Further, at any of those events, there is a strictly positive probability $p$ to go from $n$ lineages to a single lineage. By a simple coupling argument, one can bound from above the r.v.
$\hat {\bf T}_{\ell_0}$ by $\sum_{i=1}^X e_i$ where the $e_i$'s are i.d.d. exponential r.v.'s with parameter $R(\ell_0)$
and  $X$ is an independent geometric r.v. with parameter $p$. Since the upper bound has mean $\frac{1}{R(\ell_0) p}<\infty$, $\E(\hat {\bf T}_{\ell_0})<\infty$.
\end{proof}

\begin{proof}[Proof of Proposition \ref{thm:11}]
We need to show that for every $A>0$,
\begin{equation}\label{eq:A1}
\limsup_{A\to\infty} \limsup_{n\to\infty} \E( \int_A^{{\bf T}_n}|\mu^n_s| ds)  =  0 ,
\end{equation}
\blue{where, implicitly, $\int_A^{{\bf T}_n} |\mu^n_s|ds =0$ on the event $\{{\bf T}_n <A\}$ (which is highly unlikely in the limit)};
for every $d\in\N$,  \begin{equation}\label{eq:Abis}
 \int_0^{A}(|\mu^n_s|, \mu^n_s(1), \dots, \mu^n_s(d))  ds \Longrightarrow  \int_0^{A} (|\mu_s|, \mu_s(1), \dots, \mu_s(d))   ds   ,
\end{equation}
and   \begin{equation}\label{eq:A2}
 \E(\int_0^\infty|\mu_s| ds)  < \infty.
\end{equation}
We start with (\ref{eq:Abis}). Let $K\in\N$. By monotonicity, we have
$$
\int_0^A |\mu_s^n| ds \leq \frac{1}{K} \sum_{i=0}^{\lfloor AK\rfloor+1} |\mu^{n}_{\frac{i}{K}}| .
$$
Since the limiting process $(\mu_t; t\geq0)$ has no fixed point of discontinuity (see Lemma \ref{eq:cont-mapping}), the RHS is converging to $ K^{-1}\sum_{i=0}^{\lfloor AK\rfloor+1} |\mu_{{i}/{K}}|$ for every $K\in\N$.
As a consequence, $\lim \int_0^A |\mu_s^n| ds$ is bounded from above by $\int_0^A |\mu_s| ds$. A similar argument shows the reverse bound. This proves that 
$\lim_n \int_0^A |\mu_s^n| ds = \int_0^A |\mu_s| ds$. 
Next, for every $J,n\in\N$, the process $(\sum_{i=1}^J \mu^n_s(i); s\geq0)$ is also monotone in $s$, the exact same argument shows 
convergence of $\sum_{i=1}^J \int_0^A \mu^n_s(i) ds$ to   $\sum_{i=1}^J \int_0^A \mu_s(i) ds$. Finally, the proof of (\ref{eq:Abis}) is complete by noting that all the previous convergence statements hold jointly for every $J\in\N$.

Let us now proceed with the first property (\ref{eq:A1}). 
The Markov property, together with the change of variables \eqref{change-var} implies that
\begin{eqnarray*}
\E( \int_A^{{\bf T}_n}|\mu^n_s| ds) 
& = & \frac{1}{n^\alpha} \sum_{j=1}^n \P\bigg(| \mu^n_A| = \frac{j}{n} \bigg) \E\left(\int_0^{\hat{\bf T}_j}  |\hat\mu^j_s| ds \right)   \\
& \leq &    \frac{C}{n^\alpha} \sum_{j=1}^n  j^\alpha \P\bigg(| \mu^n_A| = \frac{j}{n} \bigg)    
\end{eqnarray*}
where $C$ is a constant and the inequality follows from \eqref{eq:bound-int-mut}. Let $\varepsilon\in(0,1)$. Then 
\begin{eqnarray*}
\E( \int_A^{{\bf T}_n}|\mu^n_s| ds) & \leq  &    \frac{C}{n^\alpha} 
\bigg( \sum_{j=\lfloor\varepsilon n\rfloor+1}^n  j^\alpha \P\bigg(| \mu^n_A| = \frac{j}{n} \bigg) \ 
+ \  \sum_{j=1}^{\lfloor\varepsilon n\rfloor}  j^\alpha \P\bigg(| \mu^n_A| = \frac{j}{n}  \bigg)   \bigg) \\
& \leq & C\bigg(  \P\bigg( | \mu^n_A|\in [\varepsilon,1] \bigg)  \ 
+ \  \varepsilon^\alpha \bigg) . 
\end{eqnarray*}
For every arbitrary $\varepsilon\in(0,1)$,
\begin{eqnarray*}
\limsup_{A\to\infty} \limsup_{n\to\infty} \E( \int_A^{ {\bf T}_n}|\mu^n_s| ds) 
& \leq &  C\bigg(  \lim_{A\to\infty} \P\bigg( |\mu_A| \in [\varepsilon,1] \bigg)  \ 
+ \  \varepsilon^\alpha \bigg) \\
& = & C \varepsilon^\alpha,
\end{eqnarray*}
where we used the fact that $|\mu_t|\to0 $ as $t\to\infty$ (this can easily be proved from the Lamperti transform).
This ends the proof of (\ref{eq:A1}).

We now turn to (\ref{eq:A2}). First,
recall that ${\bf T}_n>{\bf T}_2>0$ then $\hat {\bf T}_n\to \infty$. Furthermore, the fact that $|\mu_t^n| \to |\mu_t|$ and Fatou's lemma yield
\begin{eqnarray*}
\E(\int_0^\infty|\mu_s| ds ) & \leq & \lim_{n\to\infty} \E( \int_0^{\hat {\bf T}_n}|\mu^n_s| ds).
\end{eqnarray*}
So, (\ref{eq:A2}) follows from Lemma \ref{lem:bd}.
\end{proof}

\begin{proof}[Proof of Theorem \ref{thm:cv-sfs}]

By Proposition \ref{thm:11},
\begin{equation*}
\int_0^{ {\bf T}_n }\mu^n_s ds \ \Longrightarrow \ \int_0^{\infty} \mu_s ds \ \ \mbox{ \ in $\ell^1(\R_+)$.\ }
\end{equation*}
The relation
\[ \int_0^\infty \mu_s ds \ = \int_0^\infty \theta_{\tau_s} \exp(-\xi_{\tau_s}) ds\ = \  \int_0^\infty \theta_u \exp((\alpha-2) \xi_u) du\]
follows from the change of variable $\tau_s=u$ and \eqref{lampertitc}.

 Given a realization of the coalescent, the  number of segregating mutations in a branch sub-tending $i$ leaves is given by a Poisson r.v. with parameter $r \int_0^{\hat {\bf T}_n} \hat \mu^n_s(i) ds$. The total number of mutations is given by a Poisson r.v. with parameter $r \int_0^{\hat {\bf T}_n} |\hat \mu^n_s| ds$.
Using the change of variables  \eqref{change-var},
\begin{eqnarray*}
\frac{r}{n^{\alpha}} \int_0^{\hat {\bf T}_n} \hat \mu^n_s ds 
& = & r \int_0^{n^{1-\alpha} \hat {\bf T}_n } \mu^n_{\bar s} d\bar s \ \Longrightarrow r \int_0^\infty \mu_{\bar s} d\bar s,
\end{eqnarray*}
which implies the desired result.
\end{proof}

\section*{Appendix A: Moment estimates of the partition mass components.}
\label{app:moments}

In this section we provide some useful asymptotic results for the lengths of the unit mass partition when they are obtained from i.i.d. random variables with a finite second moment. 
\begin{proposition}\label{prop:moments}
Consider a family of i.i.d. random variables $(w_1,\dots,w_k)$ on $(0, \infty)$ such that $\E(w_1^2)<\infty$.
For $j\in[k]$, set $p^{(k)}_j=w_j/\sum_{i=1}^kw_i$ 
and $
\bar p_j^{(k)} = {w_j}/{k \E(w_1)}.
$
Then we have the following asymptotics:
\begin{itemize}
\item[(i)] $\lim_{k \to\infty}k p^{(k)}_1 = \Gamma := \frac{w_1}{\E(w_1)} $ almost surely.
\item[(ii)] $\lim_{k \to\infty} \E((kp^{(k)}_1)^2)= \frac{\E(w_1^2)}{\E(w_1)^2}.$
\item[(iii)] 
$ \lim_{k \to \infty} \E\left( (k \bar p_1^{(k)}  - kp_1^{(k)})^2 \right)  = 0. $
\end{itemize}
 \end{proposition}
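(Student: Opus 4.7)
The plan is as follows. Part (i) follows immediately from the strong law of large numbers: since $\E(w_1)>0$, we have $s_k/k\to \E(w_1)$ a.s., and since $kp_1^{(k)}=w_1/(s_k/k)$ with $w_1$ fixed, continuous mapping yields $kp_1^{(k)}\to w_1/\E(w_1)=\Gamma$ a.s. The main work lies in (ii), which amounts to upgrading this a.s.\ convergence to $L^1$ convergence of the squares; part (iii) will then follow from (i) and (ii) by a uniform integrability argument.

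For (ii), part (i) gives $(kp_1^{(k)})^2\to \Gamma^2$ a.s., with $\E(\Gamma^2)=\E(w_1^2)/\E(w_1)^2$, so what remains is to interchange limit and expectation. I would set $s_k'':=\sum_{i=2}^k w_i$, independent of $w_1$, and condition on $w_1$:
\[
\E\bigl((kp_1^{(k)})^2\bigr) \ = \ \int_0^\infty x^2\,\E\!\left[\frac{k^2}{(x+s_k'')^2}\right] m(dx).
\]
For each fixed $x\ge 0$, the SLLN together with dominated convergence gives that the inner expectation tends to $1/\E(w_1)^2$, and the bound $k^2/(x+s_k'')^2\le k^2/s_k''^2$ provides an $x$-free envelope. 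The main obstacle is therefore to establish the uniform estimate $\sup_{k\ge k_0}\E[k^2/s_k''^2]<\infty$. I would prove this via the Laplace representation $\E[k^2/s_k''^2]=\int_0^\infty u\,\phi(u/k)^{k-1}du$ with $\phi(t):=\E(e^{-tw_1})$: a Taylor expansion $\phi(t)\le 1-t\E(w_1)/2$ on some interval $[0,t_0]$ produces an exponential envelope $e^{-cu}$ on $u\le kt_0$, while for $u>kt_0$ the bound $\phi(u/k)\le\phi(t_0)<1$ combined with enough additional factors $\phi(u/k)$ (using the extra second moment to ensure $\int v\,\phi(v)^a dv<\infty$ for some $a$) yields a geometrically small tail. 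Given this uniform bound $\E[k^2/s_k''^2]\le C$, the envelope $Cx^2$ is $m$-integrable by Assumptions~\ref{def:finite-xi}(2), and a second application of dominated convergence yields $\E((kp_1^{(k)})^2)\to \E(w_1^2)/\E(w_1)^2$. Combined with the a.s.\ convergence, this in particular shows that the family $((kp_1^{(k)})^2;k\ge 1)$ is uniformly integrable.

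For (iii), observe that
\[
k\bar p_1^{(k)}-kp_1^{(k)} \ = \ w_1\!\left(\frac{1}{\E(w_1)}-\frac{1}{\bar w_k}\right) \ = \ \frac{w_1\bigl(\bar w_k-\E(w_1)\bigr)}{\E(w_1)\,\bar w_k},
\]
where $\bar w_k:=s_k/k$. By (i) and the SLLN this tends to $0$ a.s. Since $(k\bar p_1^{(k)})^2=\Gamma^2\in L^1$ and $(kp_1^{(k)})^2$ is uniformly integrable by (ii), the pointwise bound $(k\bar p_1^{(k)}-kp_1^{(k)})^2\le 2\Gamma^2+2(kp_1^{(k)})^2$ shows that the squared differences form a uniformly integrable family, and Vitali's convergence theorem gives $\E((k\bar p_1^{(k)}-kp_1^{(k)})^2)\to 0$.
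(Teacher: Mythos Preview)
Part~(i) is the strong law of large numbers, exactly as in the paper. Your argument for~(iii) is correct and in fact tidier than the paper's: the paper expands the square into three terms, handles the two pure squares via~(ii), and then estimates the cross term $\E[kw_1^2/(\E(w_1)s_k)]$ by a separate two-sided argument (a $\delta$-shifted denominator for the $\liminf$ and Cauchy--Schwarz combined with~(ii) for the $\limsup$). Your route---deduce uniform integrability of $((kp_1^{(k)})^2)_k$ from (i) and (ii) via Scheff\'e, dominate the squared difference by $2\Gamma^2+2(kp_1^{(k)})^2$, and apply Vitali---sidesteps the cross-term computation entirely.

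For~(ii) the paper simply cites Lemma~4.3 of \cite{cortines}, so there is no in-paper argument to compare with; but your direct proof has a real gap in the tail of the Laplace integral. You claim the second moment ensures $\int v\,\phi(v)^a\,dv<\infty$ for some integer $a$; however this integral equals $\E[S_a^{-2}]$ with $S_a=w_1+\dots+w_a$, and its finiteness is governed by the mass of $m$ \emph{near~$0$}, which the hypothesis $\E(w_1^2)<\infty$ does not control at all. For instance, if $\P(w_1\le t)\asymp 1/|\log t|$ as $t\downarrow 0$ (compatible with $w_1$ bounded, hence with every positive moment finite), then $\phi(v)\gtrsim 1/\log v$ and $\int v\,\phi(v)^a\,dv=\infty$ for every $a$; in fact $\E[k^2/(s_k'')^2]=\infty$ for every $k$ in that example, so the uniform envelope you are aiming for does not exist. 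The same obstruction undermines your inner dominated-convergence step for fixed~$x$, since the proposed majorant $(k/s_k'')^2$ is $k$-dependent and you would again need a negative-moment bound on $s_k''$. The argument in \cite{cortines} (and the paper's own handling of the cross term in~(iii)) avoids negative moments of $s_k$ altogether, combining Fatou with a $\delta$-shift in the denominator and a Cauchy--Schwarz upper bound.
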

\begin{proof}
 By the law of large numbers we obtain $(i)$.
The result in $(ii)$  can be found in Lemma 4.3 of \cite{cortines}.

To prove $(iii)$, we write
\begin{eqnarray*}
 \E\left( (k \bar p_1^{(k)}  - kp_1^{(k)})^2 \right) &= & \E\left( \frac{k^2w_1^2}{(\sum_{i = 1}^k w_i)^2} \right) -  \E\left( \frac{2kw_1^2}{\E(w_1)\sum_{i = 1}^k w_i} \right) +  \E\left( \frac{w_1^2}{\E(w_1)^2} \right).
\end{eqnarray*}
Using $(ii)$, it is enough to prove that 
\begin{eqnarray*}
 \E\left( \frac{kw_1^2}{\E(w_1)\sum_{i = 1}^k w_i} \right) \to  \frac{\E(w_1^2)}{\E(w_1)^2}.
\end{eqnarray*}
We follow the proof of  Lemma 4.3 of \cite{cortines}.
Let $\delta_1>0$, then
\[ \E\left(\frac{k w_1^2 }{\E(w_1)\sum_{i=1}^k w_i }\right)  \ \ge \  \E\left(\frac{ w_1^2 }{\delta_1 + \E(w_1)(k^{-1}\sum_{i=1}^k w_i )}\right). \]
Since $w_1>0$ almost surely, the bounded convergence theorem yields
\[\liminf_{k \to\infty}\E\left(\frac{k w_1^2 }{\E(w_1)\sum_{i=1}^k w_i )}\right)  \ \ge \ \frac{\E(w_1^2)}{\delta_1 + \E(w_1)^2}.  \]
The inequality holds for every $\delta_1>0$, which implies that the above limit is larger than $\E(w_1^2)/ \E(w_1)^2$.

To prove an upper bound, we use the Cauchy-Schwarz inequality,
\begin{eqnarray*}
 \E\left( \frac{kw_1^2}{\E(w_1)\sum_{i = 1}^k w_i} \right) \le \frac{1}{\E(w_1)} \sqrt{\E(w_1^2)}\sqrt{\E\left(\frac{w_1^2}{(k^{-1}\sum_{i = 1}^k w_i)^2}\right)}
\end{eqnarray*}
and the result follows from $(ii)$.
\end{proof}

\begin{lemma}
\label{lemma-lipschitz}
Let $g_1, g_2$ be two Lipschitz functions with respective Lipshitz constants $G_1, G_2\ge1$.  Also suppose that $|| g_1||_\infty, || g_2||_\infty\le1$.
Define $p_1^{(k)}, p_2^{(k)}$ as in Proposition \ref{prop:moments}. Then, 
\[
|\Cov(g_1(k p_1^{(k)}) , g_2(k p_2^{(k)}) )| \le \max\{G_1, G_2,  G_1 G_2\} h(k),
\]
where $h$ is a function of $k$, independent on $g_1, g_2$ going to 0 as $k\to \infty$.
\end{lemma}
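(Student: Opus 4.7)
The plan is to exploit the fact that, after normalizing by the deterministic quantity $\E(w_1)$ rather than by the random partial sum $\sum_{i=1}^k w_i$, the two variables become independent. Concretely, with $\bar p_j^{(k)} := w_j/(k\E(w_1))$ as in Proposition \ref{prop:moments}, one has $k\bar p_j^{(k)} = w_j/\E(w_1)$, which depends only on $w_j$. Since $w_1, w_2$ are i.i.d., $g_1(k\bar p_1^{(k)})$ and $g_2(k\bar p_2^{(k)})$ are independent, so $\Cov(g_1(k\bar p_1^{(k)}), g_2(k\bar p_2^{(k)})) = 0$. The task then reduces to quantifying the error incurred by substituting $kp_j^{(k)}$ with its proxy $k\bar p_j^{(k)}$, and this is where the Lipschitz continuity of the $g_j$'s together with the $L^2$-control of Proposition \ref{prop:moments}$(iii)$ come into play.

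The next step is to expand the covariance telescopically:
\begin{align*}
\Cov\bigl(g_1(kp_1^{(k)}), g_2(kp_2^{(k)})\bigr)
&= \Bigl( \E[g_1(kp_1^{(k)}) g_2(kp_2^{(k)})] - \E[g_1(k\bar p_1^{(k)}) g_2(k\bar p_2^{(k)})] \Bigr) \\
&\quad - \Bigl( \E[g_1(kp_1^{(k)})]\,\E[g_2(kp_2^{(k)})] - \E[g_1(k\bar p_1^{(k)})]\,\E[g_2(k\bar p_2^{(k)})] \Bigr),
\end{align*}
having used that the proxy covariance vanishes. Each bracketed difference is then handled by the elementary identity $|ab - a'b'| \le |a-a'|\,|b| + |a'|\,|b-b'|$ combined with $\|g_j\|_\infty \le 1$ and $|g_j(x) - g_j(y)| \le G_j|x-y|$, producing the bound
\[ \bigl|\Cov\bigl(g_1(kp_1^{(k)}), g_2(kp_2^{(k)})\bigr)\bigr| \;\le\; 2G_1\,\E|kp_1^{(k)} - k\bar p_1^{(k)}| + 2G_2\,\E|kp_2^{(k)} - k\bar p_2^{(k)}|. \]

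Finally, Cauchy--Schwarz together with the fact that $(p_j^{(k)}, \bar p_j^{(k)})$ has the same distribution for $j=1,2$ gives that both expectations are dominated by $s(k) := \sqrt{\E((kp_1^{(k)} - k\bar p_1^{(k)})^2)}$. Setting $h(k) := 4\,s(k)$ — which is independent of $g_1, g_2$ and tends to $0$ as $k\to\infty$ by Proposition \ref{prop:moments}$(iii)$ — the above bound becomes $|\Cov(\cdot)| \le 2(G_1+G_2)\,s(k) \le \max\{G_1,G_2\}\,h(k) \le \max\{G_1, G_2, G_1 G_2\}\,h(k)$, as required. No substantial obstacle is anticipated: the claim is essentially a quantitative form of the asymptotic independence of $kp_1^{(k)}$ and $kp_2^{(k)}$, and Proposition \ref{prop:moments}$(iii)$ provides exactly the input needed.
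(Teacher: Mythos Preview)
Your proof is correct and rests on the same core idea as the paper's: replace $kp_j^{(k)}$ by the proxy $k\bar p_j^{(k)} = w_j/\E(w_1)$, observe that the proxy covariance vanishes by independence of $w_1,w_2$, and control the substitution error via the Lipschitz constants together with Proposition~\ref{prop:moments}$(iii)$.

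The difference is only in how the error is decomposed. The paper expands the covariance bilinearly into three cross terms $A,B,C$ (pairing each of $g_j(kp_j^{(k)})-g_j(k\bar p_j^{(k)})$ with $g_{3-j}(k\bar p_{3-j}^{(k)})$, plus the product of the two differences) and bounds each with Cauchy--Schwarz on covariances; this naturally produces the $G_1G_2$ factor from the ``difference times difference'' term $A$. You instead telescope the two products $\E[g_1g_2]$ and $\E[g_1]\E[g_2]$ directly via $|ab-a'b'|\le |a-a'||b|+|a'||b-b'|$, which is more elementary and yields the slightly sharper bound $2(G_1+G_2)s(k)$, never needing the $G_1G_2$ term at all. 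Both routes feed into the same $L^2$-control from Proposition~\ref{prop:moments}$(iii)$, so the distinction is cosmetic.
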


\begin{proof}
Set
$
\bar p_1^{(k)} := \frac{w_1}{k \E(w_1)}
$ and $
\bar p_2^{(k)} := \frac{w_2}{k \E(w_2)}
$.
We have
\begin{eqnarray*}
&& | \Cov (g_1 (k p_1^{(k)}), g_2(k p_2^{(k)}))| \\
&& = | \Cov \bigg( g_1 (k \bar  p_1^{(k)}) + ( g_1 (k  p_1^{(k)}) -g_1 (k \bar  p_1^{(k)}) ),  g_2 (k \bar  p_2^{(k)}) + ( g_2 (k \bar  p_2^{(k)})  -g_2 (k \bar  p_2^{(k)})  )  \bigg) |\\
&& \le   \underbrace{|\Cov \bigg(  g_1 (k  p_1^{(k)}) -g_1 (k \bar  p_1^{(k)}) ,  g_2 (k  p_2^{(k)}) -g_2 (k \bar  p_2^{(k)}) \bigg)|}_{A}  \\
&& + \underbrace{|\Cov\bigg(g_1 (k \bar  p_1^{(k)}),  g_2 (k  p_2^{(k)}) - g_2 (k \bar  p_2^{(k)})   \bigg)|}_{B}  \ + \ \underbrace{|\Cov\bigg(g_2 (k \bar  p_2^{(k)}),  g_1 (k  p_1^{(k)}) -g_1 (k \bar  p_1^{(k)})   \bigg)|}_{C}.
\end{eqnarray*}
To bound $A$, observe that $p_1^{(k)}$ (resp. $\bar p_1^{(k)}$) has the same law as $p_2^{(k)}$ (resp. $\bar p_2^{(k)}$). So, by the Cauchy-Schwarz inequality,
\begin{eqnarray*}
 A & \leq &   \sqrt{\Var( g_1 (k  p_1^{(k)}) -g_1 (k \bar  p_1^{(k)}))} \sqrt{\Var( g_2 (k  p_1^{(k)}) -g_2 (k \bar  p_1^{(k)})  )}\\
&  \leq & \sqrt{ \E\bigg(( g_1 (k  p_1^{(k)}) -g_1 (k \bar  p_1^{(k)})  )^2\bigg)} \sqrt{\E \bigg((g_2 (k  p_1^{(k)}) -g_2 (k \bar  p_1^{(k)}) )^2 \bigg)}  \\ 
& \leq & G_1 G_2 \E\bigg( ( k \bar p_1^{(k)}  - k  p_1^{(k)} )^2 \bigg)  = G_1 G_2 h(k), 
\end{eqnarray*}
where in the last line we used point $(iii)$ of Proposition \ref{prop:moments}.

Let us turn to $B$ (the bound of  $C$ will be obtained in the same way). 

\begin{eqnarray*}
 B & \leq &   \sqrt{\Var( g_1 (k \bar  p_1^{(k)}))} \sqrt{\Var( g_2 (k  p_2^{(k)}) -g_2 (k \bar  p_2^{(k)})  )}\\
&  \leq & \sqrt{ \E\bigg(g_1 (k \bar  p_1^{(k)}  )^2\bigg)} \sqrt{\E \bigg((g_2 (k  p_2^{(k)}) -g_2 (k \bar  p_2^{(k)}) )^2 \bigg)}  \\ 
& \leq & || g_1||_\infty G_2 \sqrt{E\bigg( ( k \bar p_2^{(k)}  - k  p_2^{(k)} )^2} \bigg)  =  G_2  h(k), 
\end{eqnarray*}
where in the last line we used point $(iii)$ of Proposition \ref{prop:moments}.
\end{proof}

\begin{lemma}
\label{lemma:g_v,c} 
For every partition $c$ of $\ell$, 
define the function  $g_{v,c}$ on $\R_+$ by
$$
g_{v,c}(x) \ := \ x^{|c|} e^{- v(\ell+1) x }\prod_{i = 1}^\ell \frac{ v(i)^{c(i)}}{c(i)!} . 
$$
For every $c_1, c_2\in\phi^{-1}(\ell)$, there exists $h$ such that for every $v= (v(1),\dots , v(\ell), v(\ell+1))\in \R_+^{\ell+1}$ with $ v(\ell+1)\geq \sum_{i \le \ell} v(i)$
\[ |\Cov(g_{v,c_1}(k p_1^{(k)}) , g_{v,c_2}(k p_2^{(k)}) )| \le  \max\{v(\ell +1), v(\ell+1)^2\} h(k) ,\]
where  $h$ is going to $0$ as $k\to \infty$.\end{lemma}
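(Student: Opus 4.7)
The plan is to reduce this statement to (an adaptation of) Lemma \ref{lemma-lipschitz} by carefully controlling the supremum norm and the Lipschitz constant of $g_{v,c}$ in terms of $v(\ell+1)$. The key observation is that, although $g_{v,c}$ depends on all the entries of $v$, the hypothesis $v(\ell+1)\ge\sum_{i\le\ell}v(i)\ge v(i)$ makes $v(\ell+1)$ the dominant parameter.

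First I would establish a bound on $\|g_{v,c}\|_\infty$ that is independent of $v$. Writing
\[
g_{v,c}(x)\;=\;\frac{1}{c!}\Big(\prod_{i=1}^\ell v(i)^{c(i)}\Big)\,x^{|c|}e^{-v(\ell+1)x},
\]
and using $\prod_{i=1}^\ell v(i)^{c(i)}\le v(\ell+1)^{|c|}$ together with the elementary maximization $\sup_{x\ge 0}(v(\ell+1)x)^{|c|}e^{-v(\ell+1)x}=|c|^{|c|}e^{-|c|}$, one obtains $\|g_{v,c}\|_\infty\le |c|^{|c|}e^{-|c|}/c!=:d_c$, a constant depending only on $c$.

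Next I would bound the Lipschitz constant of $g_{v,c}$. A direct differentiation gives
\[
g'_{v,c}(x)\;=\;\frac{1}{c!}\Big(\prod_{i=1}^\ell v(i)^{c(i)}\Big)\,e^{-v(\ell+1)x}\bigl(|c|x^{|c|-1}-v(\ell+1)x^{|c|}\bigr),
\]
and the same maximization argument applied to each of the two terms, combined with $\prod v(i)^{c(i)}\le v(\ell+1)^{|c|}$, yields $\|g'_{v,c}\|_\infty\le G_c\,v(\ell+1)$ for some constant $G_c$ depending only on $c$.

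Finally, I would revisit the argument of Lemma \ref{lemma-lipschitz} (without the normalizations $\|g_i\|_\infty\le 1$ and $G_i\ge 1$ used there). The same three-term decomposition $A+B+C$ goes through; applying Cauchy--Schwarz and the Lipschitz bounds, together with Proposition \ref{prop:moments}(iii), gives
\[
A\le G_{c_1}G_{c_2}\,v(\ell+1)^2\,h(k),\qquad B\le d_{c_1}G_{c_2}\,v(\ell+1)\,h(k),\qquad C\le d_{c_2}G_{c_1}\,v(\ell+1)\,h(k),
\]
where $h(k):=\E((kp_1^{(k)}-k\bar p_1^{(k)})^2)\to 0$. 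Summing and absorbing the $c$-dependent constants, one obtains $|\Cov(g_{v,c_1}(kp_1^{(k)}),g_{v,c_2}(kp_2^{(k)}))|\le C_{c_1,c_2}\max\{v(\ell+1),v(\ell+1)^2\}h(k)$, as desired. The only mildly delicate step is the Lipschitz estimate in the second paragraph, which requires being careful with the powers of $v(\ell+1)$ produced by differentiating the exponential factor; everything else is a direct application of results already established.
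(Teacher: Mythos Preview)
Your proposal is correct and follows essentially the same strategy as the paper: bound $\|g_{v,c}\|_\infty$ by a constant depending only on $c$, bound the Lipschitz constant by a constant times $v(\ell+1)$, and then feed these into the three-term Cauchy--Schwarz decomposition underlying Lemma~\ref{lemma-lipschitz}. The paper differs only in execution: it obtains the sharper bound $\|g_{v,c}\|_\infty\le 1$ (via the Poisson-probability interpretation, using $v(\ell+1)\ge\sum_{i}v(i)$) so that Lemma~\ref{lemma-lipschitz} applies directly, and it pins down $\sup|g'_{v,c}|$ by solving $g''_{v,c}=0$ rather than your simpler term-by-term estimate; note also that your explicit choice $h(k)=\E((kp_1^{(k)}-k\bar p_1^{(k)})^2)$ should carry a square root in the $B$ and $C$ bounds, which is harmless since one may redefine $h$.
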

\begin{proof}
We use Lemma \ref{lemma-lipschitz}.
It is easy to see that $||g_{v,c}||_\infty \le 1$.
We just need to prove that the function $g_{v,c}$ is Lipshitz with constant $v(\ell +1) K$, for some $K$ that only depends on $c$.
 To compute the Lipshitz constant, set $\alpha:=v(\ell+1)$. Then 
$$g'_{v,c}(x)  = (|c| -\alpha x  )x^{|c|-1} e^{- \alpha x }\prod_{i = 1}^\ell \frac{v(i)^{c(i)}}{c(i)!}   $$
and 
$$g''_{v,c}(x)  = \bigg(-\alpha x+(|c| -\alpha x)(|c|-1 -\alpha x  )\bigg)x^{|c|-2} e^{- \alpha  x }\prod_{i = 1}^\ell \frac{v(i)^{c(i)}}{c(i)!},   $$
which switches sign when 
$|c|(|c|-1)-2|c|\alpha x+\alpha ^2x^2=0$, that is when $x=\frac{|c| +\epsilon \sqrt{|c|}}{\alpha }, \epsilon\in\{-1,1\}$. Since $\lim_{x\to\infty} g'_{v,c}(x)=0$ and $g'_{v,c}(0)=0$, the maximum of
$| g'_{v,c}|$ on $\R_+$ is attained for $\eps=+1$ or $\eps=-1$. Thus, using $v(i) \leq \alpha$, we have
\begin{eqnarray*}
\max_{x\in\R_+} |g'_{v,c}(x) | &=& \ \ \max_{-1,+1} \sqrt{|c|} (|c| +\eps \sqrt{|c|})^{|c|-1} e^{-(|c| +\eps \sqrt{|c|})}\frac{1}{\alpha^{|c|-1}}\prod_{i = 1}^\ell \frac{v(i)^{c(i)}}{c(i)!}\\
& \leq & \alpha \max_{-1,+1}
  e^{-(|c|+\eps\sqrt{|c|})} \sqrt{|c|} (|c|+\eps\sqrt{|c|})^{|c|-1} \ \times \  \frac{1}{c !},
\end{eqnarray*}
which completes the proof.
\end{proof}

\section*{Appendix B: Derivatives for Proposition \ref{prop1}.}
\label{appendix:rho}

In this section, the values of $x$ and ${\bf z}$ will remain fixed and for the sake of clarity, we will forget the dependence in those two constants in the next results.
\begin{lemma}\label{lem:faa}
Fix $x>0$, $\delta\in(0,1)$ and ${\bf z}\in \ell^1(\R_+)$. For every $\lambda\in [0,\delta]$, define
\[\rho(\lambda) \ := \ \E \bigg( x \exp(-{|\bz|\Gamma}/{x})  (\exp\left( \psi_\lambda(\bz)\Gamma /x\right) -1) \bigg). \]
Then $\rho$ is infinitely differentiable on $[0,\delta]$ and for every $\ell\in\N$, its $\ell^{th}$ derivative is such that
\begin{equation}\label{derivell}
\frac{1}{\ell !}  \rho^{(\ell)}(\lambda) \ = \   \sum_{c \in \phi^{-1}(\ell)}   \E\bigg( x \exp(-{|\bz|\Gamma}/{x})   \prod_{i=1}^\ell  \frac{(\Gamma/x)^{c(i)}}{c(i) !} \bigg) \prod_{i=1}^\ell \bigg(\sum_{k=i}^\infty \binom{k}{i} z(k) \lambda^{k-i} \bigg)^{c(i)} \end{equation}
with the convention  $0^0=1$. 
\end{lemma}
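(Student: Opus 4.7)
The plan is to apply Fa\`a di Bruno's formula pointwise in the probability space and then interchange derivative and expectation via dominated convergence.

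For each realization of $\Gamma\geq 0$, the map $\lambda \mapsto \psi_\lambda(\bz) = \sum_{i\geq 1}z(i)\lambda^i$ is analytic on a neighborhood of $[0,\delta]$: its non-negative coefficients sum to $|\bz|<\infty$, so the radius of convergence is at least $1$, and its $j$-th derivative is $\psi_\lambda^{(j)}(\bz) = j!\sum_{k\geq j}\binom{k}{j}z(k)\lambda^{k-j}$. Setting $g(\lambda) := \psi_\lambda(\bz)\Gamma/x$, Fa\`a di Bruno's formula applied to the composition $\exp\circ g$ yields
\begin{equation*}
\frac{1}{\ell!}\frac{d^\ell}{d\lambda^\ell}\exp(g(\lambda)) = \exp(g(\lambda)) \sum_{c\in\phi^{-1}(\ell)} \prod_{i=1}^\ell \frac{1}{c(i)!}\left(\frac{\Gamma}{x}\right)^{c(i)} \left(\sum_{k\geq i}\binom{k}{i}z(k)\lambda^{k-i}\right)^{c(i)},
\end{equation*}
where the sum ranges over partitions $c$ of $\ell$ and I have used that all derivatives of $\exp$ equal $\exp$ itself. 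Multiplying by the $\lambda$-independent factor $x\exp(-|\bz|\Gamma/x)$ produces the explicit $\lambda$-derivative of the integrand defining $\rho$.

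Next I would justify the iterated interchange of derivative and expectation via dominated convergence. The key estimate is $\psi_\lambda(\bz) \leq \lambda|\bz| \leq \delta|\bz|$ for $\lambda\in[0,\delta]$, which gives
\begin{equation*}
x \exp(-|\bz|\Gamma/x)\exp(\psi_\lambda(\bz)\Gamma/x) \leq x \exp(-(1-\delta)|\bz|\Gamma/x).
\end{equation*}
The $\lambda$-dependent polynomial factors $\left(\sum_{k\geq i}\binom{k}{i}z(k)\lambda^{k-i}\right)^{c(i)}$ are bounded uniformly on $[0,\delta]$ by the corresponding evaluation at $\lambda=\delta$, which is finite since the series $\psi_\lambda(\bz)$ converges at $\delta<1$. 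Combined with the factor $(\Gamma/x)^{|c|}$, this yields a $\lambda$-uniform dominating function of the form $C_\ell(\bz,x,\delta)\,\Gamma^{|c|}\exp(-(1-\delta)|\bz|\Gamma/x)$, which is bounded on $\R_+$ and therefore integrable under the law of $\Gamma$ irrespective of any moment beyond $\E(w_1^2)<\infty$.

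The main obstacle will be the combinatorial bookkeeping to match the Fa\`a di Bruno output with the announced formula (the $\exp(g(\lambda))$ prefactor collapses to $1$ at $\lambda=0$, which is the value used in Proposition~\ref{prop1}), together with the care required to iterate the dominated convergence argument $\ell$ times on a common neighborhood where the uniform bound holds. The conceptual content is concentrated in the estimate $\psi_\lambda(\bz)\leq\lambda|\bz|$, which turns $|\bz|-\psi_\lambda(\bz)$ into a strictly positive multiple of $|\bz|$ and so allows the exponential decay $\exp(-(1-\delta)|\bz|\Gamma/x)$ to dominate every polynomial factor in $\Gamma$ produced by the successive derivatives.
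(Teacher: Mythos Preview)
Your proposal is correct and follows essentially the same route as the paper: Fa\`a di Bruno applied pointwise in $\Gamma$, followed by dominated convergence using a uniform lower bound on $|\bz|-\psi_\lambda(\bz)$ over $[0,\delta]$ (you use $(1-\delta)|\bz|$ from $\psi_\lambda(\bz)\leq\lambda|\bz|$, the paper uses $|\bz|-\psi_\delta(\bz)$ from monotonicity; either yields a bounded, hence integrable, dominating function of $\Gamma$). Your remark about the lingering $\exp(g(\lambda))$ prefactor is also correct: the paper's own computation carries the factor $\exp\big((\psi_\lambda(\bz)-|\bz|)\Gamma/x\big)$ rather than $\exp(-|\bz|\Gamma/x)$, so the displayed formula \eqref{derivell} as written is only literally correct at $\lambda=0$, which is the sole value used in Proposition~\ref{prop1}.
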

\begin{proof}
Let $f,g$ be infinitely differentiable on their respective spaces.
Fa\`a di Bruno's formula  
 states that
\begin{equation}\label{FdB}
\frac{1}{\ell !} \  \frac{d^\ell}{d\lambda^\ell}  \bigg( f\circ g\bigg)(\lambda) \ = \    \sum_{c \in \phi^{-1}(\ell)} f^{(|c|)}(g(\lambda))  \prod_{i=1}^\ell  \frac{1}{c(i)!}\bigg(\frac{g^{(i)}(\lambda)}{i !}\bigg)^{c(i)} .\end{equation}
Let $f(y)=e^y-1$ and $g(\lambda)=\psi_\lambda(\bz)\Gamma /x$ for some fixed value of $\Gamma$. Since ${\bf z}\in \ell^{1}(\R_+)$, the function $g$
is a power series with a radius of convergence larger or equal to $1$. Thus $g$ is infinitely differentiable on $[0,\delta]$  and 
\[  \  g^{(i)}(\lambda) =  \frac{\Gamma}{x}\sum_{k=i}^\infty  (k)_i z(k) \lambda^{k-i},\]
where $(k)_{i} = k \dots (k-i+1)$. 
For our choice of $f$ and $g$, \eqref{FdB} translates into the following formula
\[  \ \frac{1}{\ell !} \frac{d^\ell}{d\lambda^\ell}  \bigg( f\circ g\bigg)(\lambda)  \ = \ 
  \sum_{c \in \phi^{-1}(\ell)}  \bigg( x \exp\bigg((\psi_{\lambda}({\bz})-|\bz|){\Gamma}/{x}\bigg)  \prod_{i=1}^\ell  \frac{(\Gamma/x)^{c(i)}}{c(i) !} \bigg) \prod_{i=1}^\ell \bigg(\sum_{k=i}^\infty \binom{k}{i} z(k) \lambda^{k-i} \bigg)^{c(i)}. \]
  Recall that $\rho(\lambda)= \E(f\circ g(\lambda))$.
In order to complete the proof, we need to justify the derivation 
inside the expected value. Since the coefficients  of $\bz$ are non-negative 
then $\psi_{\lambda}({\bz})-|\bz| < 0$ for every $\lambda\in[0,1)$.
Furthermore, if $\lambda\in[0,\delta]$, then
\[ \psi_{\lambda}({\bz})-|\bz| \leq  \psi_{\delta}({\bz})-|\bz|=:-b. \]
As a consequence,
\[ \frac{1}{\ell !} \frac{d^\ell}{d\lambda^\ell}  \bigg( f\circ g\bigg)(\lambda) \ \leq \ 
 \sum_{c \in \phi^{-1}(\ell)}  \bigg( x \exp\bigg(-b {\Gamma}/{x}\bigg)  \prod_{i=1}^\ell  \frac{(\Gamma/x)^{c(i)}}{c(i) !} \bigg) \prod_{i=1}^\ell \bigg(\sum_{k=i}^\infty \binom{k}{i} z(k) \delta^{k-i} \bigg)^{c(i)}.\]
The right hand side of the inequality does not depend on $\lambda$, and one can check that it has a finite expectation. Since  $\rho(\lambda) = \E( f\circ g(\lambda) )$ (where the expectation is taken w.r.t. $\Gamma$), by standard derivation theorem under the integral,  $\rho(\lambda)$
is infinitely differentiable and the derivatives can be calculated by differentiating inside the expectation.
\end{proof}

\begin{lemma}
Let $\rho(\lambda)$ be the function introduced in Lemma \ref{lem:faa}. For every $\ell \in \N$ and $\lambda \in [0, \delta]$, with  $\delta <1$, we have
\[\frac{1}{\ell !} | \rho^{(\ell)}(\lambda)| \ < \   x \frac{(2-\delta)^{\ell-1}} {(1-\delta)^{2\ell}} .\]
\label{lem:mclaurin}
\end{lemma}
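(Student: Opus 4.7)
My plan is to use complex analysis — specifically Cauchy's inequality for the Taylor coefficients of a holomorphic function — rather than bounding the combinatorial sum from Lemma~\ref{lem:faa} directly.

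First, since the constant part $\E(x \exp(-|\bz|\Gamma/x))$ contributes nothing to the derivatives of order $\ell \ge 1$, I would write $\rho^{(\ell)}(\lambda) = \tilde\rho^{(\ell)}(\lambda)$ for $\ell \ge 1$, where
\[
\tilde\rho(\lambda) := \E\bigl[x \exp(-(|\bz|-\psi_\lambda(\bz))\Gamma/x)\bigr].
\]
The key observation is that the triangle inequality yields, for every complex $\lambda$ with $|\lambda|\le 1$,
\[
|\psi_\lambda(\bz)| \le \sum_k z(k) |\lambda|^k \le |\bz|,
\]
so $\mathrm{Re}(|\bz| - \psi_\lambda(\bz)) \ge 0$ and the integrand is bounded by $x$ uniformly in $\Gamma$. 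A standard dominated convergence argument (controlling holomorphic difference quotients pointwise in $\Gamma$) then shows that $\tilde\rho$ extends holomorphically to the open unit disk, continuously up to the boundary, with $|\tilde\rho(\lambda)| \le x$ throughout $\{|\lambda|\le 1\}$.

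I would then apply Cauchy's inequality on the disk $\{z : |z-\lambda| \le 1-\delta\}$, which for every $\lambda\in[0,\delta]$ is contained in $\{|z|\le 1\}$. This immediately gives
\[
\frac{|\rho^{(\ell)}(\lambda)|}{\ell!} \;=\; \frac{|\tilde\rho^{(\ell)}(\lambda)|}{\ell!} \;\le\; \frac{\sup_{|z-\lambda|=1-\delta}|\tilde\rho(z)|}{(1-\delta)^\ell} \;\le\; \frac{x}{(1-\delta)^\ell}.
\]
To conclude, I would invoke the elementary inequality $(2-\delta)^{\ell-1} > (1-\delta)^\ell$, valid for all $\delta\in(0,1)$ and $\ell\ge 1$ (since $2-\delta\ge 1>1-\delta$ while $(1-\delta)^\ell<1$), which yields the strict bound $x/(1-\delta)^\ell < x(2-\delta)^{\ell-1}/(1-\delta)^{2\ell}$.

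The main technical step is the holomorphic extension together with the uniform bound. Crucially, this approach sidesteps the difficulty that the partition sum of Lemma~\ref{lem:faa} involves terms of the form $\E[(\Gamma/x)^{|c|}\exp(-|\bz|\Gamma/x)]$, which are only controlled by $|c|!/|\bz|^{|c|}$ and would blow up when $|\bz|$ is small. Replacing this combinatorial bookkeeping by the single inequality $|\psi_\lambda(\bz)|\le|\bz|$ on the unit disk produces a clean $\bz$-free bound, and the gap between the Cauchy estimate $x/(1-\delta)^\ell$ and the stated $x(2-\delta)^{\ell-1}/(1-\delta)^{2\ell}$ accounts for the slack that the paper's Proposition~\ref{prop1} exploits when controlling the Taylor remainder.
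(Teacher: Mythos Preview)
Your proof is correct and takes a genuinely different route from the paper's. The paper works directly from the explicit derivative formula of Lemma~\ref{lem:faa}: it bounds $\prod_i(\sum_{k\ge i}\binom{k}{i}z(k)\lambda^{k-i})^{c(i)}\le |\bz|^{|c|}/(1-\delta)^{\ell+|c|}$, groups partitions by $|c|=j$, uses the combinatorial identity $\sum_{c\in\phi^{-1}(\ell),\,|c|=j}\prod_i 1/c(i)!=\binom{\ell-1}{j-1}/j!$, and absorbs the $|\bz|$-dependence through the elementary inequality $e^{-y}y^j/j!\le 1$ applied with $y=|\bz|\Gamma/x$. Summing the resulting binomial series yields exactly $x(2-\delta)^{\ell-1}/(1-\delta)^{2\ell}$. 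Your complex-analytic argument is cleaner and in fact sharper: Cauchy's inequality on the disk of radius $1-\delta$ gives $x/(1-\delta)^\ell$ directly, and you then spend the slack $(2-\delta)^{\ell-1}>(1-\delta)^\ell$ to reach the stated bound. The paper's approach is elementary and self-contained, while yours replaces the partition bookkeeping by the single observation $|\psi_\lambda(\bz)|\le|\bz|$ on the closed unit disk.

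Two minor remarks. First, your closing comment that the paper's sum ``would blow up when $|\bz|$ is small'' is not quite accurate: as just noted, the inequality $e^{-y}y^j/j!\le 1$ kills the $|\bz|$-dependence in the paper's computation too, so both proofs yield a $\bz$-free bound. Second, when $\lambda=\delta$ your Cauchy disk touches the unit circle, so strictly speaking you should invoke the version of Cauchy's formula valid for functions holomorphic in the open disk and continuous on its closure (or take radii $r\uparrow 1-\delta$); you allude to this with ``continuously up to the boundary'', which is enough.
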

\begin{proof}
Fix $\ell \in \N$. 
We start by recalling that  for  $\lambda \in [0, \delta]$,
\[\sum_{k=i}^\infty \binom{k}{i}\lambda^{k-i}  =  \frac1{(1 - \lambda)^{i+1}}  \le  \frac1{(1 - \delta)^{i+1}}. \]
Observe that if $c\in  \phi^{-1}(\ell)$,  $\sum ic(i) = \ell$, so 
\begin{eqnarray*} \prod_{i=1}^\ell \bigg(\sum_{k=i}^\infty \binom{k}{i} z(k) \lambda^{k-i} \bigg)^{c(i)} & \le &\ \prod_{i=1}^\ell |\bz|^{c(i)}\left(  \frac1{(1 - \delta)^{i+1}} \right)^{c(i)}\\
  &\le& \frac{ |\bz|^{|c|} }{(1-\delta)^{\ell+|c|}}. \end{eqnarray*}
Finally, replacing in  \eqref{derivell}, 
\begin{eqnarray*}
\frac{1}{\ell !}  \rho^{(\ell)}(\lambda) & = &
\sum_{j  = 1}^\ell \sum_{\underset {|c| = j}{c \in \phi^{-1}(\ell)}}   \E\bigg( x \exp(-{|\bz|\Gamma}/{x})   \prod_{i=1}^\ell  \frac{(\Gamma/x)^{c(i)}}{c(i) !} \bigg) \prod_{i=1}^\ell \bigg(\sum_{k=i}^\infty \binom{k}{i} z(k) \lambda^{k-i} \bigg)^{c(i)} \\
&\le & \frac x{(1-\delta)^\ell} \sum_{j  = 1}^\ell \E\bigg(  \exp(-{|\bz|\Gamma}/{x}) (|\bz|\Gamma/x(1-\delta))^{j} \bigg) \sum_{\underset {|c| = j}{c \in \phi^{-1}(\ell)}} \prod_{i = 1}^\ell  \frac{1}{c(i)!} \\
& = & \frac x{(1-\delta)^\ell} \sum_{j  = 1}^\ell \E\bigg(  \exp(-{|\bz|\Gamma}/{x}) \frac{(|\bz|\Gamma/x(1-\delta))^{j}}{j!} \bigg) \binom{\ell -1}{j-1} \\
&\le &   \frac x{(1-\delta)^\ell}  \sum_{j  = 1}^\ell \binom{\ell -1}{j-1}\frac1{(1-\delta)^j} =  x\frac{(2-\delta)^{\ell-1}} {(1-\delta)^{2\ell}} .
\end{eqnarray*}
\end{proof}



\begin{thebibliography}{99}
\bibitem{Alili} \sc L. Alili, L. Chaumont, P. Garczyk and T. \.Zak. {\rm Inversion, duality and Doob $h$-transforms for self-similar Markov processes.} {\it Electron. J. Probab.}, {\bf 22}:20, 2017.
\bibitem{BG} \sc A.L. Basdevant and C. Goldschmit. {\rm Asymptotics of the allele frequency spectrum associated with the Bolthausen-Sznitman coalescent.}  {\it Electron. J. Probab.}, {\bf 13}, 486--512, 2008.
\bibitem{Bas04} \sc R.F. Bass. {\rm Stochastic differential equations with jumps.} {\it Probab. Surveys}, {\bf 1}, 1--19, 2004.
\bibitem{BBL} \sc J. Berestycki, N. Berestycki and V. Limic. {\rm The $\Lambda$-coalescent speed of coming down from infinity.} {\it Ann. Probab.},  {\bf 38}:1, 207--233, 2010.
\bibitem{BBS}\sc J. Berestycki, N. Berestycki and J. Schweinsberg. {\rm Small-time behavior of $\beta$ coalescents.} {\it Ann. Inst. Henri Poincaré Probab. Stat.}, {\bf 44}:2, 214-–238, 2008. 
\bibitem{Berbook}  \sc J. Bertoin.  {\rm Random fragmentation and coagulation processes.} {\rm Cambridge University Press}, {2006}.
\bibitem{S1}\sc J. Bertoin and M.E. Caballero, {\rm Entrance from $0+$ for increasing semi-stable Markov processes.} {\it Bernoulli}, {\bf 8} ,  2, 195--205, 2002.
\bibitem{BertoinLeGall} \sc J. Bertoin and J.F. Le Gall. {\rm  Stochastic flows associated to coalescent processes.} {\it Probab. Theory Relat. Fields}, {\bf 126}:2, 261--288, 2003.
\bibitem{BertoinLeGall2} \sc J. Bertoin and J.F. Le Gall. {\rm  Stochastic flows associated to coalescent processes II.} {\it Annales de l'I.H.P. Probabilités et statistiques,}, {\bf 21}:3, 307--333, 2005.
\bibitem{S2}J. Bertoin and M. Yor. {\rm  The entrance laws of self-similar Markov processes and exponential functionals of L\'evy processes.} {\it  Potential Anal.}, {\bf 17}, 4, 389--400, 2002.
\bibitem{BY}J. Bertoin and M. Yor. {\rm  Exponential functionals of L\'evy processes.}, {\it  Probab. Surveys} {\bf 2}, 191--212, 2005.
\bibitem{Billingsley} \sc  P. Billingsley. {\rm Convergence of probability measures.} {\rm Wiley}, 1999.
\bibitem{BDDK}\sc  M. Birkner, I. Dahmer, C.S. Diehl and G. Kersting.  {\rm The joint fluctuations of the lengths of the Beta($2-\alpha, \alpha$)-coalescents.} {\it preprint}, arXiv:2009.13642.
\bibitem{BCEH} \sc J. Blath, M.C. Cronj\"ager, B. Eldon and M. Hammer. {\rm The site-frequency spectrum associated with $\Xi$-coalescents.} {\it Theor. Popul. Biol.}, {\bf 110}, 36-50, 2016.
\bibitem{cinlar} E. \c{C}inlar. {\rm Markov additive processes. I, II.} {\it Z. Wahrscheinlichkeitstheorie und Verw. Gebiete.} {\bf 24}, 1972.
\bibitem{S4} E. Cinlar, {\rm Entrance-exit distributions for Markov additive processes.} {\it Math. Programming Stud.},  {\bf 15}, 22--38, 1976
\bibitem{cortines} \sc A. Cortines. {\rm The genealogy of a solvable population model under selection with dynamics related to directed polymers.} {\it Bernoulli.}, {\bf 22}:4, 2209-2236, 2016.
\bibitem{DKk} \sc I. Dahmer and  G. Kersting.  {\rm The internal branch lengths of the Kingman coalescent.} {\it  Ann. Appl. Prob.}, {\bf 25},  2015.
\bibitem{S3}S. Dereich, L. D\"oring and A.E. Kyprianou. {\rm Real self-similar processes started from the origin}, {\it Ann. Probab.},  {\bf 45} ,3, 1952--2003, 2017.
\bibitem{DKbs} \sc C.S. Diehl and G. Kersting.  {\rm Tree lengths for general $\Lambda$-coalescents and the asymptotic site frequency spectrum around the Bolthausen-Sznitman coalescent.} {\it  Ann. Appl. Prob.}, {\bf 29}:5, 2700-2743,  2019.
\bibitem{EK} \sc S. Ethier and T. G. Kurtz. {\rm Markov Processes: Characterization and Convergence}, {\rm}, 1986. 
\bibitem{FL}\sc Y.X. Fu and W.H. Li. {\rm Statistical tests of neutrality of mutations.} {\it Genetics}, {\bf 133}:3, 693--709, 1993.
\bibitem{GM}\sc F. Gaiser and M. M\"ohle.   {\rm On the block-counting process and the fixation line of exchangeable coalescents.}  {\it ALEA, Lat. Am. J. Probab. Math. Stat.}, {\bf 13}, 809--833, 2016.
\bibitem{GCMPSJ} {\sc A. Gonz\'alez Casanova, V. Mir\'o Pina and A. Siri-J\'egousse. } {\rm The symmetric coalescent and Wright-Fisher models with bottlenecks.} {\it Ann.  Appl. Probab.,} {\bf 32}:1, 235--268, 2022.
\bibitem{HM} \sc B. Haas and G. Miermont.  {\rm Self-similar scaling limits of non-increasing Markov chains.} {\it  Bernoulli}, {\bf 17};4, 1217--1247, 2011.
\bibitem{HSJB}\sc A. Hobolth, A. Siri-Jégousse and M. Bladt. {\rm Phase-type distributions in population genetics.} {\it Theor. Pop. Biol.,} {\bf 127}, 16--32, 2019.
\bibitem{HuilletMohle} {\sc T. Huillet and M. M\"ohle. } {\rm Asymptotic genealogies for a class of generalized Wright-Fisher models.} {\it Modern Stoch. Theory Appl.}, {\bf 9}:1, 17-43, 2021.
\bibitem{KPSJ}
\sc G. Kersting, J.C. Pardo and A. Siri-J\'egousse. {\rm Total internal and external lengths of the Bolthausen-Sznitman coalescent.} {\it  J. Appl. Probab.}, {\bf 51A}, 73--86, 2014.
\bibitem{KSJW} \sc G. Kersting, A. Siri-J\'egousse and A. H. Wences. {\rm Site frequency spectrum of the Bolthausen-Sznitman coalescent.} {\it ALEA, Lat. Am. J. Probab. Math. Stat.}, {\bf 18}, 1483--1505, 2021.
\bibitem{Kiu} \sc S.W. Kiu. {\rm Semistable Markov processes in $\R^n$.} {\it Stochastic Process. Appl.}, {\bf 10}:2, 183--191,  1980.
\bibitem{S5}  A.E. Kyprianou, V. Rivero, B. Sengul and T. Yang.  {\rm Entrance laws at the origin of self-similar Markov processes in high dimensions.} {\it Trans. Amer. Math. Soc.}, 373, 6227--6299, 2020. 
\bibitem{Lamperti1} \sc J. Lamperti.  {\rm Semi-stable stochastic processes.} {\it Trans. Am. Math. Soc.}, {\bf 104}:1, 62--78, 1962.
\bibitem{Lamperti2} \sc J. Lamperti.  {\rm Semi-stable Markov processes. I.} {\it   Z. Wahrscheinlichkeitstheorie Verw. Geb.}, {\bf 22}, 205--225, 1972.
\bibitem{limic} \sc V. Limic. {\rm On the speed of coming down from infinity for $\Xi$-coalescent processes.} {\it Electron. J. Probab.}, {\bf 15}:8, 2010.
\bibitem{Moh10} \sc M. M\"ohle. {\rm Asymptotic results for coalescent processes without proper frequencies and applications to the two-parameter Poisson-Dirichlet coalescent.} {\it Stochastic Process. Appl.}, {\bf 120}, 2159--2173, 2010.
\bibitem{Meyer}\sc P.A. Meyer. {\rm Un cours sur les  intégrales stochastiques,} in {\rm Séminaire de Probabilit'es X,} Lecture Notes Math. {\it Springer, Berlin,}, {\bf 511}, 1976.
\bibitem{mohle2}\sc M. M\"ohle. {\rm The rate of convergence of the block counting process of exchangeable coalescents with dust.} 
{\it ALEA, Lat. Am. J. Probab. Math. Stat.,} {\bf 18}, 1195--1220, 2021.
\bibitem{mohle3} \sc M. M\"ohle and S. Sagitov. {\rm A classification of coalescent processes for haploid exchangeable population models.} {\it Ann. Probab.},  {\bf 29}, 1547--1562, 2001.
 \bibitem{Roos}  {\sc B. Roos.}  {\rm  Improvements in the Poisson approximation of mixed Poisson distributions.} {\it J. Stat. Plan. Inference}, {\bf 8}, 210--293, 2003.
\bibitem{Ross}{\sc N. Ross.} {\rm  Fundamentals of Stein's method}. {\it Probab. Surveys}, {\bf 113}, 467--483, 2011.
\bibitem{S2000}  \sc J. Schweinsberg. {\rm Coalescents with simultaneous multiple collisions}. {\it Electron. J. Probab.}, {\bf 5}:12, 2000. 
\bibitem{spence}  \sc  J.P. Spence, J.A. Kamm and Y.S. Song. {\rm The site frequency spectrum for general coalescents.} {\it Genetics}, {\bf 202}:4, 1549--1561, 2016. 
\bibitem{Skorokhod}  \sc  A.V. Skorokhod. {\rm "Limit theorems for stochastic processes.} {\it Th. Probab. Appl.}, {\bf 1}, 261--290, 1956. 
\bibitem{exchnonexch}  \sc  A.H. Wences amd A. Siri-Jégousse. {\rm Exchangeable coalescents beyond the Cannings class.} {\it arXiv:2212.02154}, 2022. 
\bibitem{Whitt} \sc W. Whitt.  {\rm Stochastic-process limits:  an introduction to stochastic-process limits and their application to queues.} {\rm Springer}, 2002.
\end{thebibliography}




\end{document}